\newif\ifforceblacklinks\forceblacklinksfalse
\newcommand{\set}[1]{\ensuremath{\mleft\{ #1 \mright\}}}
\DeclareRobustCommand{\lset}[2]{\ensuremath{\mleft\{\mleft.#1\ \vphantom{#2}\mright|\ #2\mright\}}}
\newcommand{\Ona}[2]{\ensuremath{\mathrm{O}^{(#1)}_{#2}}}
\newcommand{\OA}[1]{\ensuremath{\mathrm{O}_{#1}}}
\newcommand{\Ops}[1][A]{\OA{#1}}
\DeclareMathOperator{\Pol}{Pol}
\DeclareMathOperator{\Aut}{Aut}
\DeclareMathOperator{\End}{End}
\DeclareMathOperator{\Emb}{Emb}
\DeclareMathOperator{\Hom}{Hom}
\DeclareMathOperator{\Sym}{Sym}
\DeclareMathOperator{\CSP}{\text{\textsf{CSP}}}
\DeclareMathOperator{\id}{id}
\DeclareMathOperator{\betw}{betw}
\DeclareMathOperator{\crc}{circ}
\DeclareMathOperator{\sep}{sep}
\newcommand{\Fn}[2][n]{#2^{(#1)}}%
\newcommand{\powerset}[1]{\ensuremath{\mathfrak{P}\apply{#1}}}
\newcommand{\apply}[1]{\ensuremath{\mleft( #1 \mright)}}
\newcommand{\fapply}[1]{\ensuremath{\mleft[ #1 \mright]}}
\newcommand{\gapply}[1]{\ensuremath{\mleft\langle #1 \mright\rangle}}
\newcommand{\subs}{\subseteq}%
\newcommand{\defeq}{\mathrel{\mathop:}=}
\newcommand{\nsg}{\trianglelefteq}%
\newcommand{\exi}{\overline{\xi}}%
\renewcommand{\phi}{\varphi}%
\newcommand{\cl}[1]{\ensuremath{\overline{#1}}}%
\newcommand{\bfa}[1]{{\bm#1}}%
\newcommand{\N}{\mathbb{N}}
\newcommand{\Q}{\mathbb{Q}}
\newcommand{\cpl}[1]{\mathbb{#1}^{\complement}}%
\newcommand{\alt}{\,/\hspace{0pt}\,}%
\newcommand{\inbr}[1]{\textup{(}#1\textup{)}}
\newcommand{\nbd}[1]{{#1}\nobreakdash-\hskip0pt}% from amsmath
\DeclareRobustCommand{\nlb}{\penalty10000\hskip0pt\relax}%
\DeclareRobustCommand{\nbdd}[1]{\nbd{\mbox{$#1$}}}%
\DeclareRobustCommand{\dash}{\nolinebreak\hskip0pt-\hskip0pt}% a dash
\let\OLDrestriction\restriction  % save old state of the command to avoid infinite recursion loop in the redefinition below
\renewcommand{\restriction}{\mathclose\OLDrestriction}
\newcommand{\Restriction}{|}
\title{On a stronger reconstruction notion for monoids and clones%
\thanks{Preliminary versions of these results were presented at
the `96.~Ar\-beits\-ta\-gung All\-ge\-mei\-ne Al\-ge\-bra'
(`96th Workshop on General Algebra'), AAA96, that took place in
Darm\-stadt, Germany, 1--3 June 2018,
at the `56th Summer School on Algebra and Ordered Sets',
held in \v{S}pindler\r{u}v Ml\'yn, Czech Republic, 2--7 September 2018,
and at the meeting `From permutation groups to model theory: a workshop
inspired by the interests of Dugald Macpherson, on the occasion of his
60th birthday' at the ICMS, Edinburgh, UK, 17--21 September 2018.}}
\author{Mike Behrisch$^{1,3}$  and Edith Vargas-Garc\'\i a$^{2,4}$}
\date{}
\begin{document}
\maketitle
\newtheorem{theorem}{Theorem}[section]
\newtheorem{corollary}[theorem]{Corollary}
\newtheorem{definition}[theorem]{Definition}
\newtheorem{remark}[theorem]{Remark}
\newtheorem{example}[theorem]{Example}
\newtheorem{problem}{Problem}
\newtheorem{fact}[theorem]{Fact}
\newtheorem{lemma}[theorem]{Lemma}
\newtheorem{proposition}[theorem]{Proposition}

\setcounter{footnote}{1}\footnotetext{In\-sti\-tut f\"ur Dis\-kre\-te
Ma\-the\-ma\-tik und Geo\-me\-trie, Tech\-ni\-sche Uni\-ver\-si\-t\"at Wien, A-1040 Vienna, Austria; e-mail \url{behrisch@logic.at}}
\setcounter{footnote}{2}\footnotetext{Department of Mathematics, ITAM,
R\'\i o Hondo 1, Ciudad de M\'exico 01080, Mexico; e-mail
\url{edith.vargas@itam.mx}}
\setcounter{footnote}{3}\footnotetext{The research of the first author
was partly supported by the OeAD KONTAKT project CZ~02/2019 `Function
algebras and ordered structures related to logic and data fusion'.}
\setcounter{footnote}{4}\footnotetext{The second author gratefully
acknowledges financial support by the Asociaci\'on Mexicana de Cultura A.C.}

\begin{abstract}
Motivated by reconstruction results by Rubin, we introduce a new
reconstruction notion for permutation groups, transformation monoids and
clones, called automatic action compatibility, which entails automatic
homeomorphicity. We further give a characterization of automatic
homeomorphicity for transformation monoids on arbitrary carriers with a
dense group of invertibles having automatic homeomorphicity. We then
show how to lift automatic action compatibility from groups to monoids
and from monoids to clones under fairly weak assumptions. We finally
employ these theorems to get automatic action compatibility results for
monoids and clones over several well-known countable structures,
including the strictly ordered rationals, the directed and undirected
version of the random graph, the random tournament and bipartite graph,
the generic strictly ordered set, and the directed and
undirected versions of the universal homogeneous Henson graphs.
\bgroup
\let\thefootnote\relax%
\footnote{\noindent\emph{2010 Mathematics Subject Classification.}%
  \begin{tabular}[t]{ll}%
  Primary:&
  08A35, % General algebraic systems, Algebraic structures,
         % Automorphisms, endomorphisms (!)
  08A40, % General algebraic systems, Algebraic structures,
         % Operations (!clones), polynomials, primal algebras
  54H15; % General topology, Connections with other structures applications
         % Transformation groups and semigroups (!)
  \\
  Secondary:&
  08A02, % General algebraic systems, Algebraic structures,
         % Relational systems, laws of composition (!)
  03C15  % Mathematical logic and foundations, Model theory
         % Denumerable structures
  03C40  % Mathematical logic and foundations, Model theory
         % Interpolation (!), preservation, definability
  \end{tabular}}%
\footnote{\noindent\emph{Key words and phrases.}
automatic action compatibility, automatic homeomorphicity,
reconstruction, weak forall-exists-interpretation, transformation
monoid, clone, topological monoid, topological clone, homogeneous
structure}%
\setcounter{footnote}{4}%
\egroup
\end{abstract}

\section{Introduction}

Permutation groups, transformation monoids and clones carry a natural
top\-ology: the Tichonov topology (also known as the topology of pointwise
convergence). Under this
topology the corresponding group becomes a topological group since composition and taking inverses are continuous operations, similarly
a transformation monoid becomes a topological monoid, and clones (see
below for a definition) become topological clones; here again the composition
becomes continuous with respect to the topology induced by the Tichonov
topology.
\par

When we pass from a (usually countable) structure $\mathbb{A}$ to its
automorphism group $\Aut\apply{\mathbb{A}}$ seen as permutation group, then
to $\Aut\apply{\mathbb{A}}$ seen as a topological group, and finally to
$\Aut\apply{\mathbb{A}}$ as an abstract group, we keep losing
information, and a principal problem is: how much of this information can be recovered?
\par

Hence, some of the natural research questions in this field
relate to `reconstruction' issues.
For automorphism groups of (usually countable) structures, the reconstruction problem can be roughly stated as the question of determining as much
about a structure as possible given only its automorphism group (up to
some notion of equivalence). If one restricts the class of structures
one looks at, there are various famous instances in which a partial or
complete answer can be found. For instance, in~\cite{Rubin1994},
\cite{RubinMcCleary2005} and~\cite{Rubin2016}, M.~Rubin tackles
reconstruction problems for chains, Boolean algebras, certain
\nbdd{\aleph_0}categorical trees, a broad class of
\nbdd{\aleph_0}categorical structures in general, and several other
specific structures.
\par

One popular method, which by no means applies in all cases,
is to establish the `small index property' (SIP) for a structure.
SIP means in its original form that for a
countable structure~$\mathbb{A}$ any
subgroup of its automorphism group of countable index contains the pointwise stabilizer of a finite set.
Reinterpreted via the
natural topology on the automorphism group this is saying that any
subgroup of countable index is open.
Seeking a more robust notion of reconstruction leads to the definition that a countable structure~$\mathbb{A}$ is `reconstructible'
if any isomorphism from its automorphism group to a closed subgroup of
$\Sym(\omega)$ is an isomorphism of the topological groups, that is, a
homeomorphism. While the SIP is mainly useful in the countable case, the
latter concept also makes sense for carriers of arbitrary cardinality.
\par

If we seek to apply the above notions to clones, or even to monoids
naturally associated with~$\mathbb{A}$ such as the self\dash{}embedding
or endomorphism monoid,
there are immediate problems regarding the SIP. The notion of `index' does not arise naturally in the semigroup context.
The better way to view it therefore is in the formulation via `automatic
homeomorphicity'~\cite{Bodirsky}.  Automatic homeomorphicity means that every
(abstract) isomorphism between two (closed) permutation groups, transformation semigroups or
two clones on carriers of the same size is a homeomorphism with respect
to the Tichonov topologies. That is, the isomorphism class of a
particular (closed) permutation group, understood as an abstract group, coincides
with its isomorphism class seen as a topological group, and likewise for
monoids and clones. A naturally arising question is then, of course,
which monoids or clones are `reconstructible', or have automatic
homeomorphicity.
\par

One of Rubin's central reconstruction results~\cite{Rubin1994} gives a
criterion for the class of countable \nbdd{\aleph_0}categorical
relational structures without algebraicity. Whenever such a
structure~$\mathbb{A}$ has a so\dash{}called
weak \nbdd{\forall\exists}interpretation and~$\mathbb{B}$ is countable \nbdd{\aleph_0}categorical without algebraicity, it is
enough to know $\Aut\apply{\mathbb{A}}\cong \Aut\apply{\mathbb{B}}$ in
order to conclude that the permutation groups
$\gapply{\Aut\apply{\mathbb{A}}, A}$ and $\gapply{\Aut\apply{\mathbb{B}}, B}$
are isomorphic. More precisely, if~$\mathbb{A}$ and~$\mathbb{B}$ are
countable \nbdd{\aleph_0}categorical structures without algebraicity,
$\mathbb{A}$ has a weak \nbdd{\forall\exists}interpretation and
$\xi\colon \Aut\apply{\mathbb{A}}\to \Aut\apply{\mathbb{B}}$ is an
isomorphism, then there exists a bijection~$\theta$ between the domain
of~$\mathbb{A}$ and the domain of $\mathbb{B}$ such that for all
$g \in \Aut\apply{\mathbb{A}}$ we have
$\xi(g) = \theta\circ g \circ \theta^{-1}$.
\par

We take this result as a motivation to define a stronger property
analogous to automatic homeomorphicity: we ask that any abstract
algebraic isomorphism from a permutation group, transformation monoid or
clone to another such structure from a given class, having an equipotent
carrier, is a concrete isomorphism, i.e., one that respects the action. So, for a
permutation group, transformation monoid or clone with this `automatic
action compatibility' the algebraic isomorphism class coincides with its
action isomorphism class. Such a property implies automatic
homeomorphicity and gives rise to stronger reconstruction results for
countably infinite first\dash{}order structures, see
section~\ref{subsect:reconstruction}.
\par

For the example structures we
consider, these immediate consequences concerning automatic
homeomorphicity of the monoids of self\dash{}embeddings are special
cases of results known from the literature (\cite[Corollary~22,
p.~3726]{Bodirsky} for the countable universal homogeneous graph,
directed graph, tournament, \nbdd{k}uniform hypergraph ($k\geq 2$),
\cite[Theorem~2.6, p.~75]{BehReconstructingTopologyOnRationals} for
$(\Q,<)$, \cite[Theorems~3.7 and~4.1]{TrussVargasReconstructingTopologyMonoidsPolClonesOfRationals} for the
rationals with the circular order or the separation relation,
respectively, \cite[Theorem~2.2, p.~599]{PechReconstructingTopologyOfEEmbCntblSaturatedStr}
for the Henson graphs and digraphs).
In fact, one can get these results in a uniform way by combining the
arguments in Corollary~\ref{cor:easier-assumptions} and
Lemma~\ref{lem:str-partially-satisfying-assumptions} with Corollary~3.17
of~\cite[p.~11]{PechPolymorphismClonesHomStr}.
However, with the exception of the random graph (see~\cite[Theorem~5.2,
p.~3737]{Bodirsky}), the random directed graph, the \nbdd{k}uniform
hypergraph
(see~\cite[Example~4.6, p.~19 et seq.]{PechPolymorphismClonesHomStr}) and the strictly ordered generic poset
(\cite[Theorem~4.7, p.~20]{PechPolymorphismClonesHomStr}) not much is
known on the side of the polymorphism clones. Hence, all automatic
homeomorphicity corollaries one can derive from our reconstruction
results shed some fresh light on polymorphism clones related with the
studied example structures. In particular, it
follows from Lemma~\ref{lem:str-partially-satisfying-assumptions} that
any polymorphism clone on~$\mathbb{Q}$ having $\End(\Q,<)$ as its
unary part has automatic homeomorphicity w.r.t.\ the class of all
polymorphism clones of countable \nbdd{\aleph_0}categorical structures
without algebraicity. As a special case we therefore
obtain new theorems regarding automatic homeomorphicity of
$\Pol(\Q,<)$ itself and of the polymorphism clones of the reducts of the
strictly ordered rationals, $\Pol(\Q,\betw)$, $\Pol(\Q,\crc)$ and $\Pol(\Q,\sep)$, see
Lemma~\ref{lem:str-partially-satisfying-assumptions} and
Corollaries~\ref{cor:Q-betw} and~\ref{cor:reconstruction-ssip}. A better understanding of these
structures has been one of our initial motivations for studying
automatic action compatibility.
However, the obtained results are conditional in that the scope of the
automatic action compatibility / homeomorphicity condition is
restricted to a proper subset of all closed clones on countable carrier
sets.
Proving an unconditional automatic homeomorphicity result
for the polymorphism clone of~$(\Q,<)$, as well as for the ones of its
reducts, remains an open problem for the time being.
\par

The central contribution of this article is that we provide a machinery how to
transfer automatic action compatibility from the group case to suitable
transformation monoids and clones. In this respect, it turns out that
in concrete instances the essential structural obstructions (i.e.,
additional assumptions that need to be made) come from the
group case; our lifting techniques work fairly generally and have, for
example, no restrictions on the cardinality of the carrier sets. The
main technical condition that we have to impose to cross the border from
groups to monoids is that the group lie dense in the monoid, but this
does not present a severe restriction from the viewpoint of applications
(cf.~\cite[section~3.7, p.~3716]{Bodirsky}). In this setting, as a
by-product, we prove a new characterization of automatic
homeomorphicity for transformation monoids on arbitrary carrier sets. It
involves a weakening of the technical assumption that the only injective
monoid endomorphism fixing every group member be the identical
one, which has featured in several earlier reconstruction
results~\cite{Bodirsky,PechReconstructingTopologyOfEEmbCntblSaturatedStr,TrussVargasReconstructingTopologyMonoidsPolClonesOfRationals,BehReconstructingTopologyOnRationals}.
We believe that our weakened version, so, by our characterization,
automatic homeomorphicity, would also have been sufficient in any of
these previous cases.
\par

Reconstruction questions have quite a long tradition in mathematics, for
they can, in a broader sense, even be traced back to the problem of
classifying (hence understanding) geometries from their symmetry groups
in Felix Klein's `Er\-lan\-ger Pro\-gramm'~\cite{KleinErlangerProgramm,KleinErlangenProgramme}.
Recently, however, such questions have also gained importance in applied
contexts, namely for studying the complexity of so\dash{}called
`constraint satisfaction problems' (CSPs) in theoretical computer science.
These are decision problems regarding the satisfiability of primitive
positive logical formul\ae{} where the constraints are formulated using (usually
relational) structures~$\mathbb{A}$ with finite signature,
so\dash{}called `templates', given as a defining parameter of
the problem, $\CSP(\mathbb{A})$. Besides finite structures, in
particular countably infinite homogeneous structures obtained as
Fra\"\i{}ss\'e{} limits play a key role here because they are
\nbdd{\aleph_0}categorical and can be treated from the point of view of
the Galois connection of polymorphisms and invariant relations like
their finite siblings~\cite[Theorem~5.1]{BodirskyNesetrilConstraintSatisfactionWithCountableHomogeneousTemplates}
Automatic homeomorphicity comes into play when comparing the complexity
of CSPs given by two countable \nbdd{\aleph_0}categorical template
structures $\mathbb{A}$ and~$\mathbb{B}$ of finite but
possibly different relational signature. Namely, if there is a
continuous homomorphism from the polymorphism clone of~$\mathbb{A}$ to
that of~$\mathbb{B}$ whose image is dense (and in contrast to the finite
case, continuity is a true requirement here), then there is a primitive
positive interpretation of~$\mathbb{B}$ into~$\mathbb{A}$
\cite[Theorem~1, p.~2529]{BodirskyPinskerTopologicalBirkhoff}. In this
case one can find a log\dash{}space reduction from
$\CSP(\mathbb{B})$ to $\CSP(\mathbb{A})$, meaning that
$\CSP(\mathbb{B})$ is not essentially harder to solve than
$\CSP(\mathbb{A})$.
Thus the presence of a homomorphism between the clones that is also a
homeomorphism indicates that $\CSP(\mathbb{A})$ and $\CSP(\mathbb{B})$
have the same decision complexity: the problems are interreducible. So
from the viewpoint of CSPs, structures with automatic homeomorphicity
are easier to study as they behave more like finite templates where
continuity of homomorphisms is not an issue to establish reductions. As
automatic action compatibility is a new and more powerful concept than
automatic homeomorphicity, its full effect in the context of CSPs yet remains to be explored.

\section{Preliminaries}\label{sect:preliminaries}

\subsection{Maps, groups, monoids, clones and their homomorphisms}
Subsequently we write $\N=\set{0,1,2,\ldots}$ for the set of natural
numbers, and for sets $A$ and $B$ we denote their inclusion by
$A\subs B$ as opposed to proper set inclusion $A\subset B$. Further, we
use the symbol~$\powerset{A}$ to denote the set of all subsets of~$A$.
The cardinality of a set~$A$ will be written as~$\lvert A\rvert$.
Moreover, if $f\colon A\to B$ is a function, $U\subs A$ and $V\subs B$
are subsets, we write $f\fapply{U}$ for the image of~$U$ under~$f$ and
$f^{-1}\fapply{V}$ for the preimage of $V$ under~$f$. If
$f\fapply{U}\subs V$, we write $f\restriction_U^V$ for the function
$g\colon U\to V$ that is given as restriction of~$f$, i.e., $g(x) =
f(x)$ for $x\in U$. If $V=B$, we use the abbreviation
$f\restriction_U\defeq f\restriction_U^B$. We also define the surjective
function $f\Restriction_U\defeq f\restriction_U^{f\fapply{U}}$ for any
$U\subs A$. In this way, if $g\colon C\to D$ is another function and
$U\subs A\cap C$, then $f\Restriction_U = g\Restriction_U$ is equivalent
to $f(u) = g(u)$ for all $u\in U$, because for~$\Restriction_U$ we do
not have to care about the equality of the co\dash{}domain.

For a set~$A$ and $n\in \N$ we denote by
$\Ona{n}{A}\defeq \lset{f}{f\colon A^n\rightarrow A} = A^{A^n}$ the set
of all \nbdd{n}ary operations on~$A$ and by~$\Ops =
\bigcup_{n\in\N}\Ona{n}{A}$ the set of all finitary operations on~$A$.
If $F\subs \Ops$ is a set of finitary operations on~$A$ and $n\in\N$,
we usually write $\Fn{F}$ for $\Ona{n}{A}\cap F$ and call it the
\emph{\nbdd{n}ary part} of~$F$. This is consistent
since $\Fn{\apply{\Ops}} = \Ona{n}{A}$.
We compose maps from right to left, so if $g\colon A\to B$ and
$f\colon B\to C$, then $f\circ g\colon A\to C$, mapping $a\in A$ to
$f(g(a))$. A subset $S\subs \Ona{1}{A}$ of unary operations on~$A$ that
is closed under composition is a transformation semigroup on~$A$. If~$S$
contains the identity~$\id_A$, it is a transformation monoid.
Semigroup homomorphisms need to be compatible with the binary
operation~$\circ$, and monoid homomorphisms have to additionally
preserve the neutral element. In contrast to this abstract notion a
\emph{transformation semigroup homomorphism} (or \emph{action
homomorphism}) between $S\subs\Ona{1}{A}$ and $T\subs\Ona{1}{B}$
consists of a pair~$(\phi,\theta)$ of maps, where $\phi\colon S\to T$ is
a homomorphism as above and $\theta\colon A\to B$ is a map such that
$\phi(f)\circ\theta  = \theta\circ f$ for every $f\in S$. If~$\theta$ is
a bijection, this is the same as
$\phi(f) = \theta\circ f\circ \theta^{-1}$ and we say that the
homomorphism $\phi\colon S\to T$ is \emph{induced by conjugation
by~$\theta$}. This happens in particular, if $(\phi,\theta)$ is an
\emph{action isomorphism}, i.e., if it has an inverse action homomorphism
$(\phi^{-1},\theta^{-1})$. Moreover, a semigroup homomorphism
$\phi\colon S\to T$ \emph{is induced by conjugation}, if there is some
bijection~$\theta\colon A\to B$ such that $(\phi,\theta)$ is an action
homomorphism.
In this case~$\phi$ is automatically injective and an
isomorphism when restricted to its image~$\phi\fapply{S}$. So if~$S$ is
a transformation monoid (a permutation group),
then~$\phi\fapply{S}$ will be a transformation monoid (a permutation
group, respectively), and the restriction of~$\phi$
to~$S$ and $\phi\fapply{S}$ will be a monoid (group) isomorphism, and
even gives an action isomorphism. Note also that any transformation semigroup,
transformation monoid or permutation group on~$A$ and any
bijection~$\theta\colon A\to B$ give rise to an action isomorphism
$(\phi,\theta)$ by defining $\phi\colon S\to \phi\fapply{S}$ by
$\phi(f) \defeq \theta\circ f\circ\theta^{-1}$ for $f\in S$.
\par
We shall also be concerned with composition structures of functions of
higher arity (motivated by higher-ary symmetries of structures), that
are perhaps less known:
A set of finitary operations~$F\subs\Ops$ on a fixed set~$A$ is a \emph{clone} if it
contains the projections and is closed under composition, i.e., $F$ is a
clone if for all $n\in \N\setminus \set{0}$, the set
$\set{e_i^{(n)}\colon (x_1,\dotsc,x_n)\mapsto x_i\mid i\in \set{1,\ldots,n}}$ is contained in~$F$ and for
every \nbdd{n}ary operation~$f\in\Fn{F}$ and \nbdd{m}ary operations
$g_1,\ldots,g_n$ from~$F$ the (\nbdd{m}ary) composition
$f\circ(g_1,\dotsc,g_n)$, given by
\begin{align*}
f\circ(g_1,\ldots,g_n)(\bfa{x})&=f(g_1(\bfa{x}),\ldots,g_n(\bfa{x}))
\end{align*}
for every $\bfa{x}\in A^m$, also belongs to~$F$.

An (abstract) \emph{clone homomorphism} between clones~$F\subs\Ops$
and~$F'\subs\OA{B}$ is a map $\xi\colon F\to F'$ that respects arities
and is compatible with projections and composition. It is a \emph{clone
isomorphism} if it has an inverse clone homomorphism
$\xi^{-1}\colon F'\to F$ with which it composes to the identity on~$F$
and~$F'$, respectively. Generalizing the situation for transformation
monoids, an \emph{action homomorphism} (or \emph{concrete clone
homomorphism}) is a pair $(\xi,\theta)$ where $\xi\colon F\to F'$ is a
clone homomorphism and $\theta\colon A\to B$ is a map such that for
every $n\in\N$ and all $f\in\Fn{F}$ we have
$\xi(f)\circ\apply{\theta\times\dotsm\times \theta} = \theta\circ f$,
where the product is formed with exactly~$n$ factors of~$\theta$.
Again, if $\theta\colon A\to B$ is a bijection, this means
$\xi(f) = \theta\circ f\circ\apply{\theta^{-1}\times\dotsm\times
\theta^{-1}}$
for all $f\in F$, and in this case, by a slight abuse of terminology, we still say that~$\xi$ is
\emph{induced by conjugation by~$\theta$}. This situation clearly
happens for action isomorphisms, having, by definition, an inverse
action homomorphism~$(\xi^{-1},\theta^{-1})$. The same remarks as above
are true: Any clone homomorphism induced by conjugation (i.e., by some
bijection $\theta\colon A\to B$) is injective and a clone isomorphism
(together with~$\theta$ it even is an action isomorphism) when
restricted to its image. Also any clone and any bijection to some other
carrier set induces an action isomorphism in the natural way.
\par

\subsection{Uniformities and topologies on powers}
Functions on a set~$A$ of any fixed arity are members of a power~$A^I$,
namely where $I=A^n$ and~$n$ is the arity of the function. Such powers
of~$A$ (and their subsets) carry a natural topological and even uniform
structure induced on the subset by the power structure on~$A^I$ when~$A$
is initially understood as discrete topological or uniform space.
\par

We only give a short introduction to uniform spaces. A uniformity
on a set~$A$ is a non\dash{}empty (lattice) filter $\mathcal{U}\subs
\powerset{A\times A}$ of reflexive binary relations on~$A$ that is
closed under taking inverses (that is, with every $\alpha\in\mathcal{U}$ also
$\alpha^{-1} = \lset{(y,x)\in A^2}{(x,y)\in\alpha}\in\mathcal{U}$) and
has the property that for every $\alpha\in \mathcal{U}$ there is some
$\beta\in\mathcal{U}$ such that $\beta\circ \beta = \lset{(x,z)\in
A^2}{\exists y\in A\colon (x,y),(y,z)\in\beta}\subs\nlb\alpha$, which
represents a property analogous to the triangle inequality for metric
spaces. The members $\alpha$ of a uniformity~$\mathcal{U}$ are called
\emph{entourages}, and the idea is that $(x,y)\in\alpha$ means that
points~$x$ and~$y$ of~$A$ are uniformly close to each other.
A uniform space is a pair~$(A,\mathcal{U})$ where $\mathcal{U}$ is a
uniformity on~$A$. A \emph{uniformity base $\mathcal{B}$ on~$A$} is a
filter base of binary relations on~$A$ (called \emph{basic
entourages}) such that the filter generated by~$\mathcal{B}$, that is,
the set of all relations containing some $\beta\in\mathcal{B}$, is a
uniformity on~$A$. This requires that $\mathcal{B}\subs\mathcal{U}$, so
every $\beta\in\mathcal{B}$ must be reflexive. Thus a sufficient
condition for~$\mathcal{B}\subs\powerset{A}$ to be a uniformity base is
that~$\mathcal{B}$ is a non\dash{}empty downward directed collection of
reflexive binary relations that is closed under taking inverses and for
every $\beta\in\mathcal{B}$ there is a $\gamma\in\mathcal{B}$ such that
$\gamma\circ\gamma\subs\beta$.
It is well known that every uniformity~$\mathcal{U}$ on~$A$ induces a
topology by saying that a set $U\subs A$ is open if for every $x\in U$
there is an entourage $\alpha\in\mathcal{U}$ such that
$[x]_{\alpha} = \lset{y\in A}{(x,y)\in\alpha}\subs U$. This means the
collection $\lset{[x]_\alpha}{\alpha\in\mathcal{U}}$ forms a
neighbourhood base of~$x$ for each $x\in A$. A map $h\colon A\to B$
between uniform spaces $(A,\mathcal{U})$ and $(B,\mathcal{V})$ is said
to be \emph{uniformly continuous} if for every $\beta\in\mathcal{V}$ there is
some $\alpha\in\mathcal{U}$ such that
$\apply{h\times h}\fapply{\alpha}\subs \beta$. It is clear that it is
sufficient to require this condition to be satisfied for all
$\beta\in\mathcal{B}$ where~$\mathcal{B}$ is a uniformity base
of~$\mathcal{V}$. Moreover, we say that $h\colon A\to B$ is a
\emph{uniform homeomorphism} if it is a bijection and $h$ and $h^{-1}$
are both uniformly continuous.
Of course uniform continuity implies continuity with
respect to the topologies induced by the uniform spaces, so uniform
homeomorphicity implies homeomorphicity.
\par

For our purposes only two uniformities are relevant: the first is the
discrete uniformity, which is generated by the
base~$\bigl\{\Delta^{(2)}_A\bigr\}$ where $\Delta^{(2)}_A=\set{(x,x)\mid x\in A}$.
Thus the discrete uniformity has every reflexive binary
relation as an entourage, and hence induces the discrete
topology~$\powerset{A}$. The second is the uniformity induced by the
discrete one (on~$A$) on sets $F\subs A^I$ where~$I$ is any index set.
The category of uniform spaces with uniformly continuous maps has
products and they are given by equipping the Cartesian product with the
least uniformity on the product such that all projections are uniformly
continuous. Applied to our situation, this implies that a uniformity
base of $A^I$ is given by all equivalence relations of the form
$\alpha_J = \lset{(f_1,f_2)\in \apply{A^I}^2}{
                                f_1\restriction_J = f_2\restriction_J}$
where~$J$ ranges over all finite subsets $J\subs I$.
This induces a uniformity base on $F\subs A^I$ by restricting the basic
entourages to
$\alpha_J\cap F^2 =
        \lset{(f_1,f_2)\in F^2}{f_1\restriction_J = f_2\restriction_J}$.
We note that the product uniformity on~$A^I$ induces the standard
product topology (Tichonov topology) on~$A^I$.
\par

Based on the product uniformity every permutation group\alt
transformation monoid\alt transformation semigroup $F\subs A^A$ carries a natural uniform
structure related to functions interpolating each other on finite
subsets of their domain. Also every clone $F\subs \Ops$ can be written as
$F = \dot{\bigcup}_{n\in\N}\Fn{F}$ and hence be equipped with the
coproduct uniformity given by the uniform structures on each~$\Fn{F}$,
$n\in \N$. Whenever we will be using concepts like openness, closedness,
topological closure, interior, continuity etc., we will implicitly be
referring to the topology induced by the uniformity just described.
In particular, it makes sense to ask whether homomorphisms between
transformation semigroups or between clones are uniformly continuous.
It follows from the definition of the coproduct and the uniformity
induced on subspaces that a clone homomorphism $\xi\colon F\to F'$ is
uniformly continuous if and only if for every $n\in\N$ the restriction
$\xi\restriction_{\Fn{F}}^{\Fn{F'}}\colon \Fn{F}\to\Fn{F'}$ is uniformly
continuous, and the uniformities involved in this condition are the ones
previously described. For more detailed information on the general
product, coproduct and subspace constructions occurring in the above we
refer the reader to the excellent and concise overview given in
section~2 of~\cite{SchneiderUniformBirkhoff}. A more thorough treatment
of uniform spaces can also be found in chapter~9
of~\cite[p.~238]{WillardGeneralTopology}.
\par

It is worth noting that any clone (group, monoid, semigroup) isomorphism
that is induced by conjugation automatically is a uniform homeomorphism.
This underlines the importance of action isomorphisms in the context of
automatic homeomorphicity.
\par

\subsection{Relational structures}
Our main source for permutation groups, transformation monoids\alt
semi\-groups and for clones shall be sets of homomorphisms of relational
structures. If $\mathbb{A}$ and $\mathbb{B}$ are relational structures
of the same signature on~$A$ and~$B$, respectively, then a map $h\colon
A\to B$ is a homomorphism between~$\mathbb{A}$ and~$\mathbb{B}$ if for
any $m\in\N$ and any \nbdd{m}ary relational symbol of the common
signature that is interpreted in~$\mathbb{A}$ and~$\mathbb{B}$ as
$R\subs A^m$ and $S\subs B^m$, respectively, the following implication
is true: for every $\bfa{x}=(x_1,\dotsc,x_m)\in R$ it is required that
$h\circ \bfa{x} = (h(x_1),\dotsc,h(x_m))\in S$. We usually denote the
truth of this fact by $h\colon \mathbb{A}\to\mathbb{B}$ and collect all
homomorphism between~$\mathbb{A}$ and~$\mathbb{B}$ in the set
$\Hom(\mathbb{A},\mathbb{B})$.
If $\mathbb{A}=\mathbb{B}$ then any homomorphism
$h\colon\mathbb{A}\to\mathbb{B}$ is an \emph{endomorphism
of~$\mathbb{A}$}; the set of all endomorphisms of~$\mathbb{A}$ is
$\End(\mathbb{A})$, it forms a transformation monoid. A homomorphism
$h\colon \mathbb{A}\to\mathbb{B}$ having an inverse homomorphism
$h'\colon \mathbb{B}\to\mathbb{A}$ is an \emph{isomorphism}.
Isomorphisms can be equivalently characterized as those bijections that
not only preserve relations as described above, but also reflect
relations, i.e., for isomorphisms the implication used to define the
homomorphism property is a logical equivalence. An
isomorphism which also is an endomorphism is an \emph{automorphism} of a
structure~$\mathbb{A}$ and all automorphisms of~$\mathbb{A}$ form the
set~$\Aut(\mathbb{A})$, which naturally carries a permutation group
structure. An intermediate concept between homomorphism and
isomorphism is that of an \emph{embedding} that is an injective relation
preserving and reflecting map, i.e., an isomorphism, when restricted to
the image. The set of all self\dash{}embeddings of~$\mathbb{A}$ is
denoted as $\Emb(\mathbb{A})$ and gives us another source of
transformation monoids that are closer to the automorphism group.
Moreover, defining
relations in the product structure component-wise, one can study
homomorphisms between the \nbdd{n}th power~$\mathbb{A}^n$ ($n\in\N$)
and~$\mathbb{A}$, which are called \nbdd{n}ary \emph{polymorphisms
of~$\mathbb{A}$}. The set
$\Pol(\mathbb{A})=\bigcup_{n\in\N}\Hom\apply{\mathbb{A}^n,\mathbb{A}}$
consists of all polymorphisms of~$\mathbb{A}$. Such sets always form
clones, and it is well\dash{}known that the polymorphism clones of
structures on a given set~$A$ are exactly those clones that are closed
in the Tichonov topology.
\par
To obtain some good examples for our results we need to impose some
model theoretic `niceness' properties on infinite structures. One of
these is countable categoricity. We say that a structure~$\mathbb{A}$ is
\emph{\nbdd{\aleph_0}categorical}
% \cite[p.~43]{HodgesModelTheory}
if up to isomorphism there is exactly
one model of cardinality~$\aleph_0$ of the first\dash{}order theory
of~$\mathbb{A}$. Thus such~$\mathbb{A}$ cannot be finite.
% because |A|=n ⟹ ∃φₙ∈Th(A) saying A has exactly n elements,
%                           so Th(A) has no infinite models
If~$\mathbb{A}$ itself is countably infinite, then
\nbdd{\aleph_0}categoricity implies that~$\mathbb{A}$ is the only
countable model of its first\dash{}order theory up to isomorphism.
% because ∀n∈ℕ∃φₙ∈Th(A) saying that A has at least n distinct elements,
% so any countable model B ∈ Mod(Th(A)) cannot be finite and must thus
% have cardinality ℵ₀. Then ℵ₀-categoricity of A implies B≅A.
All examples occurring in this paper will be
of the latter form, and, by the Ryll-Nardzewski
Theorem~\cite[Theorem~7.3.1, p.~341]{HodgesModelTheory}, for such
structures countable categoricity can be equivalently formulated as a
condition on~$\Aut(\mathbb{A})$ called oligomorphicity
(cf.~\cite[p.~134]{HodgesModelTheory}). Another property we shall need
is homogeneity. We say that a first\dash{}order structure~$\mathbb{A}$
is \emph{homogeneous} (occasionally called ultra\dash{}homogeneous,
cf.~\cite[p.~325]{HodgesModelTheory}), if any isomorphism between any
two finitely generated substructures of~$\mathbb{A}$ can be extended to
an automorphism of~$\mathbb{A}$. For relational structures finitely
generated substructures coincide with finite substructures, so
homogeneity means that any isomorphism between finite substructures
must be extendable to an automorphism. All example structures
considered in this article have a purely relational signature.
As a third type we shall meet structures~$\mathbb{A}$ \emph{without
algebraicity}, that is, whose automorphism group~$\Aut(\mathbb{A})$ has
no algebraicity. In this context, a permutation group~$G$ on~$A$ has no
algebraicity if the algebraic closure of any (finite) subset~$B$ of~$A$
is trivial, i.e., equal to~$B$
(see~\cite[pp.~134, 330]{HodgesModelTheory}). This means for any
(finite) set~$B\subs A$ the only points having finite orbit with respect
to the pointwise stabilizer~$G_{(B)}$ of~$B$ under~$G$ are those in~$B$.
\par

In order to facilitate the study of concrete examples of
countably infinite homogeneous relational structures, we need the
following easy lemma collecting some basic properties. In this
connection we denote by $\Delta_A^{(m)}$ the \nbdd{m}ary relation
$\set{(x,\dots,x) \mid x\in A}$ on a given carrier set~$A$.
The two final statements of the lemma even hold without the assumption
of homogeneity.
\begin{lemma}\label{lem:hom-structures}
For any homogeneous relational structure
$\mathbb{A} = \apply{A,\apply{R_i}_{i\in I}}$, where $R_i\subs A^{m_i}$
is an \nbdd{m_i}ary relation for each $i\in I$, the following facts
hold.
\begin{enumerate}[\upshape (a)]
\item\label{item:loopless}
      If for every $i\in I$ we have
      $R_i\cap \Delta_A^{(m_i)}=\emptyset$ or
      $R_i\cap\Delta_A^{(m_i)}=\Delta_A^{(m_i)}$, then $\Aut(\mathbb{A})$ is
      transitive.
\item\label{item:cntbl-cat}
      If $I$ is finite and $\lvert A\rvert = \aleph_0$, then
      $\mathbb{A}$ is \nbdd{\aleph_0}categorical.
\item\label{item:emb-dense}
      $\overline{\Aut(\mathbb{A})} = \Emb(\mathbb{A})$
      and the invertible embeddings are precisely the automorphisms.
\item\label{item:emb-cmplmt}
      The structure
      $\cpl{A} =
       \apply{A, \apply{R_i}_{i\in I}, \apply{A^{m_i}\setminus R_i}_{i\in
      I}, A^2\setminus \Delta_A^{(2)}}$
      has the property $\End(\cpl{A})=\Emb(\mathbb{A})$.
\item\label{item:trivial-centre}
      If~$\mathbb{A}$ has no algebraicity, then the centre of
      $\Aut(\mathbb{A})$ contains only the identity.
\end{enumerate}
\end{lemma}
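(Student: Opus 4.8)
The plan is to argue by contradiction, exploiting directly the link between centrality and the hypothesis of no algebraicity; note that homogeneity plays no role here, in accordance with the remark preceding the lemma. Write $G=\Aut\apply{\mathbb{A}}$ and suppose some~$g$ in the centre of~$G$ satisfies $g\neq\id_A$. Then there is a point $a\in A$ with $b\defeq g(a)\neq a$, and I would focus attention on the pointwise stabilizer $G_{(\{a\})}$ of the singleton~$\{a\}$.

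The decisive step is to observe that centrality pins down the behaviour of every stabilizer element at~$b$. Indeed, for any $h\in G_{(\{a\})}$ we have $h(a)=a$, whence, using $g\circ h=h\circ g$,
\begin{equation*}
b = g(a) = g\apply{h(a)} = h\apply{g(a)} = h(b).
\end{equation*}
Thus $h(b)=b$, so~$b$ is fixed by the whole of $G_{(\{a\})}$; in particular the orbit of~$b$ under $G_{(\{a\})}$ equals the singleton~$\{b\}$, which is certainly finite.

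It then remains to invoke the absence of algebraicity. Since~$\mathbb{A}$ has no algebraicity, the algebraic closure of the finite set~$\{a\}$ is trivial, i.e.\ the only points with finite orbit under $G_{(\{a\})}$ are those lying in~$\{a\}$. As~$b$ has finite orbit, this forces $b\in\{a\}$, that is $b=a$, contradicting $b\neq a$. Hence no non-identity central element exists and the centre of~$G$ contains only~$\id_A$. The computation is entirely routine; the only genuine point---and the step I would expect to be the conceptual crux---is recognizing that commuting with every automorphism fixing~$a$ is exactly what forces~$b$ into the (trivial) algebraic closure of~$a$, so that the hypothesis applies.
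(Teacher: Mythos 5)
Your argument for part~(\ref{item:trivial-centre}) is correct, and it is essentially the paper's own proof in slightly different clothing: the paper observes that the graph of a central automorphism~$f$ is invariant under every group element, so that the pointwise stabilizer of any~$a$ must fix~$f(a)$ (phrased there via definability of $\set{f(a)}$ from $\set{a}$ and the graph), whence $f(a)$ lies in the algebraic closure of~$\set{a}$ and no algebraicity forces $f(a)=a$. Your direct computation $h(b)=h\apply{g(a)}=g\apply{h(a)}=g(a)=b$ for $h\in G_{(\set{a})}$ is exactly that stabilizer-fixing step carried out by hand, and your contradiction framing versus the paper's direct framing is purely cosmetic; your observation that homogeneity is not needed also agrees with the paper's remark.

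The genuine gap is one of coverage: the statement to be proved is the full five-part lemma, and you have addressed only item~(\ref{item:trivial-centre}). Parts~(\ref{item:loopless})--(\ref{item:emb-cmplmt}) need their own (short but non-vacuous) arguments, and unlike~(\ref{item:trivial-centre}) they do lean on homogeneity or other inputs: (\ref{item:loopless}) requires noting that the stated diagonal condition makes all induced singleton substructures isomorphic, so homogeneity extends the singleton isomorphisms to automorphisms; (\ref{item:cntbl-cat}) is an appeal to the Ryll-Nardzewski theorem for homogeneous structures in a finite relational signature; (\ref{item:emb-dense}) needs the interpolation argument that any self-embedding restricted to a finite substructure extends, by homogeneity, to an automorphism, giving $\overline{\Aut(\mathbb{A})}=\Emb(\mathbb{A})$; and (\ref{item:emb-cmplmt}) rests on the observation that preserving a relation together with its complement amounts to preserving and reflecting it, and that preserving $A^2\setminus\Delta_A^{(2)}$ is injectivity, so the endomorphisms of~$\cpl{A}$ are exactly the embeddings of~$\mathbb{A}$. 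As submitted, the proposal proves one fifth of the lemma, correctly and by the paper's route, and leaves the rest unproven.
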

\begin{proof}
\begin{enumerate}[\upshape (a)]
\item By the assumed condition, for every $i\in I$ the \nbdd{i}th
      relation of the induced substructure of~$\mathbb{A}$ on any
      singleton~$\set{a}$ is either empty or contains the constant tuple
      $(a,\dotsc,a)$. This means any two induced singleton substructures
      of~$\mathbb{A}$ are isomorphic, and by homogeneity the unique
      isomorphism between~$\set{a}$ and~$\set{b}$ can be extended to an
      automorphism of~$\mathbb{A}$, whatever $a,b\in A$.
\item This statement is a consequence of the Ryll-Nardzewski Theorem,
      which can be found, for instance, in Corollary~3.1.3
      of~\cite[p.~1607]{MacphersonHomogeneousStructures}.
\item The inclusion
      $\overline{\Aut(\mathbb{A})}\subseteq\Emb(\mathbb{A})$ is generally
      true, the converse follows, because for any $f\in\Emb(\mathbb{A})$
      and any finite subset $B\subseteq A$ the restriction of~$f$ to the
      substructures induced by~$B$ and~$f\fapply{B}$ is an isomorphism,
      which, by homogeneity, can be extended to an automorphism
      of~$\mathbb{A}$.
      The additional statement characterizing the group of invertible
      elements of~$\Emb(\mathbb{A})$ is evident from the definitions.
\item A map $f\colon A\to A$ preserves the complement of a relation, if
      it reflects the relation. In particular, preserving the inequality
      relation is equivalent to injectivity. The statement follows from
      the fact that the embeddings are exactly the injective relation
      preserving and relation reflecting maps.
\item This argument does not require homogeneity of~$\mathbb{A}$.
      If~$f$ belongs to the centre of the automorphism group, its graph
      is invariant for any group member, and so the stabilizer of any
      point~$a\in A$ must fix~$f(a)$ since~$\set{f(a)}$ is (primitive
      positively) definable from~$\set{a}$ and the graph of~$f$. Thus
      the algebraic closure of~$\set{a}$ contains~$f(a)$, however,
      since~$\mathbb{A}$ has no algebraicity, it follows that
      $f(a)=a$ for every~$a\in A$. Hence, the centre of
      $\Aut(\mathbb{A})$ is the singleton~$\set{\id_A}$.
      \qedhere
\end{enumerate}
\end{proof}

Concerning the reconstruction of countable \nbdd{\aleph_0}categorical
structures from their automorphism groups we rely on the notions of
\emph{(strong) small index property} and
\emph{weak \nbdd{\forall\exists}interpretation}. These are actually
properties of the automorphism group considered as a permutation group,
so let $G\subs \Sym(A)$ be a permutation group on a countable set~$A$.
We say that~$G$ has the \emph{small index property} (SIP),
see~\cite[p.~144]{HodgesModelTheory}, if every
subgroup $U\leq G$ of countable index contains the pointwise stabilizer
$G_{(B)}$ of a finite subset~$B\subs A$. This is equivalent to asking
that every subgroup $U\leq G$ with $\lvert G/U\rvert\leq \aleph_0$ is
open in the Tichonov topology on~$A^A$. Strengthening this requirement,
the permutation group~$G$ has the \emph{strong small index property}
(SSIP), see~\cite[p.~146]{HodgesModelTheory}, if for every $U\leq G$ of
countable index in~$G$ there is a finite set $B\subs A$ such that
$G_{(B)}\subs U \subs G_{B}$, that is, $U$ lies between the pointwise
and the setwise stabilizer of~$B$. The notion of \emph{weak \nbdd{\forall\exists}interpretation} needs a
lengthy and technical definition that is nowhere needed in this text,
so we leave it as an undefined black box tool and refer the reader
to~\cite{Rubin1994} or~\cite[p.~1620]{MacphersonHomogeneousStructures}
for more information.
Only the consequences of this property in the context of countable
\nbdd{\aleph_0}categorical structures without algebraicity (as stated in
Theorem~\ref{thm:Rubin-group-categoricity}) are relevant for us.
For brevity we say that a structure~$\mathbb{A}$ has the SIP, SSIP
or a weak \nbdd{\forall\exists}interpretation whenever
$\Aut(\mathbb{A})$ has it.

\subsection{Reconstruction notions}\label{subsect:reconstruction}
For applications to such reconstruction questions that go beyond automorphism
groups we need the notion of \emph{automatic homeomorphicity} and, in
close analogy to the latter, we introduce the concept of \emph{automatic
action compatibility}. Both can be defined for closed permutation
groups, transformation monoids and clones, and in both
definitions we allow some class~$\mathcal{K}$ of permutation groups,
transformation monoids or clones to act as a parameter
restricting the scope of the condition. For transformation monoids and
clones on countable sets the concept of automatic homeomorphicity
originated in~\cite[Definition~6, p.~3714]{Bodirsky}; to the best of our knowledge, the
modification of automatic homeomorphicity relative to a parameter class was first given
in~\cite[Definition~4.1, p.~142]{PechAutHomeoTransfMon}. Our definition
extends both as it allows for any carrier set and a scope
parameter~$\mathcal{K}$.

\begin{definition}\label{def:aut-homeo}
Let $\set{F}\cup\mathcal{K}$ be a class of permutation groups\alt
transformation monoids\alt clones and let~$A$ be the carrier set of~$F$.
We say that~$F$ has
\begin{enumerate}[\upshape (a)]
\item
  \emph{automatic homeomorphicity with respect
  to~$\mathcal{K}$}, if for every
  $F'\in \mathcal{K}$ on a set~$B$ of the same cardinality as~$A$
  any group\alt monoid\alt clone isomorphism $\phi\colon F\to F'$
  is automatically a homeomorphism;
\item
  \emph{automatic action compatibility with respect
  to~$\mathcal{K}$}, if for every
  $F'\in \mathcal{K}$ on a set~$B$ of the same cardinality as~$A$
  any group\alt monoid\alt clone isomorphism $\phi\colon F\to F'$
  is automatically part of an action isomorphism, that is, if there is
  some bijection $\theta\colon A\to B$ such that $\phi$ is induced by
  conjugation by~$\theta$.
\end{enumerate}
It is customary to agree that~$F$ having automatic homeomorphicity
(action compatibility) without any restriction means~$F$ having this
property with respect the class of all (closed, if $F$ is closed)
permutation groups\alt transformation monoids\alt clones on sets
equipotent to~$A$.
\par
To shorten formulations we stipulate that if~$\mathcal{C}$ is a class of
structures, the statement that~$F$ has one of the above properties
`with respect to~$\mathcal{C}$' means that the permutation
group\alt transformation monoid\alt clone~$F$ has the property with
respect to the class~$\mathcal{K}$ of all automorphism groups\alt
endomorphism monoids\alt poly\-morph\-ism clones of structures
in~$\mathcal{C}$.
\end{definition}

Because every isomorphism induced by conjugation is automatically an
(even uniform) homeomorphism, it is immediate that automatic
action compatibility with respect to~$\mathcal{K}$ implies automatic
homeomorphicity with respect to~$\mathcal{K}$.
\par

Often, automatic homeomorphicity is only considered for such~$F$
that are closed in the Tichonov topology. If in this case $\mathcal{K}$
contains some~$F'\cong F$ on some set equipotent to the carrier of~$F$,
and $F'$ is not closed, then, trivially, $F$ has neither
automatic homeomorphicity nor automatic action compatibility with
respect to~$\mathcal{K}$ (because homeomorphisms preserve closedness).
As such parametrizations do not give very interesting notions, one often
restricts the definition to the case where $\set{F}\cup \mathcal{K}$
consists only of closed groups\alt monoids\alt clones.
Definition~\ref{def:aut-homeo} as given above, however, also allows for
some possibly non\dash{}closed~$F$, where non\dash{}closed
$F'\in\mathcal{K}$ would possibly make sense.
So as a rule of thumb, if~$F$ is closed, we normally only consider
classes~$\mathcal{K}$ of closed sets of functions; if it is not, we may
use any~$\mathcal{K}$ in Definition~\ref{def:aut-homeo}.
\par

It is clear from the definition that whether some~$F\in\mathcal{K}$ has
automatic homeomorphicity\alt action compatibility with respect
to~$\mathcal{K}$ only depends on those members of~$\mathcal{K}$ that
live on carrier sets equipotent to the carrier of~$F$, and thus one
could in principle restrict the definition to the case where all members
of~$\set{F}\cup\mathcal{K}$ have equipotent carriers. In fact, if
$F'\in\mathcal{K}$ has carrier~$B$ and $\theta\colon B\to\nlb A$ is a
bijection with the carrier~$A$ of~$F$, then~$F$ has automatic
homeomorphicity\alt action compatibility with respect to~$\mathcal{K}$
if and only if this holds with respect
to~$(\mathcal{K}\setminus\set{F'})\cup\set{F''}$, where $F''$ on~$A$ is
(uniform homeomorphically) action isomorphic to~$F'$ by conjugation
with~$\theta$. Similarly, all other members of~$\mathcal{K}$ that do not
live on~$A$ could be replaced by an action isomorphic copy on~$A$, and
one could do with classes $\set{F}\cup\mathcal{K}$ all of whose members
live on the same set (and then `on a set~$B$ of the same cardinality
as~$A$' could be dropped from the definition). We prefer the version
above because it is more convenient to say `with respect to all
self-embeddings monoids of relational structures without algebraicity'
than `with respect to all self-embeddings monoids of relational
structures without algebraicity on carriers of size $\aleph_{42}$'.
\par

One should note that the concepts given in
Definition~\ref{def:aut-homeo} give rise to reconstruction results in
the \nbdd{\aleph_0}categorical setting. We explain this in the case of
automorphism groups of relational structures; for polymorphism clones
the situation is essentially the same---effectively in the following
results `first\dash{}order' has to be replaced by `primitive
positive'---but the details are a bit more technical,
see~\cite[section~2.1, p.~3708 et seq.]{Bodirsky} for references and
further information and~\cite[p.~3714]{Bodirsky} for remarks on the
monoid case.
Given countable \nbdd{\aleph_0}categorical structures~$\mathbb{A}$
and~$\mathbb{B}$ where $\Aut(\mathbb{A})$ has automatic homeomorphicity
with respect to the class of (automorphism groups of) countable
\nbdd{\aleph_0}categorical structures, the condition
$\Aut(\mathbb{A})\cong \Aut(\mathbb{B})$ implies that these permutation
groups are isomorphic as topological groups, and thus by a theorem of
Coquand (presented in~\cite[Corollary~1.4(ii), p.~67]{%
AhlbrandtZieglerQuasiFinitelyAxiomatizableTotallyCategoricalTheories}),
the structures~$\mathbb{A}$ and~$\mathbb{B}$ are first\dash{}order
bi\dash{}interpretable.
If on the other hand $\Aut(\mathbb{A})$ has automatic action
compatibility with respect to countable \nbdd{\aleph_0}categorical
structures, then the same condition on the automorphism groups implies
that these are even isomorphic as permutation groups, whence as a
consequence of Ryll-Nardzewski's Theorem, $\mathbb{A}$
and~$\mathbb{B}$ are even first\dash{}order interdefinable (see
also~\cite[Proposition~1.3, p.~226]{Rubin1994}).
\par

Thus, automatic action compatibility entails a stronger reconstruction
notion (up to first\dash{}order bidefinability from automorphism groups,
up to primitive positive bidefinability from polymorphism
clones~\cite[cf.~Theorem~5.1, p.~365]{%
BodirskyNesetrilConstraintSatisfactionWithCountableHomogeneousTemplates}) than automatic homeomorphicity.
It is hence an important question, whether there are any good examples
of countable \nbdd{\aleph_0}categorical relational structures having
automatic action compatibility. Fortunately, there are two theorems that
connect our concept to other well\dash{}studied properties, namely to
Rubin's weak \nbdd{\forall\exists}interpretations and to the strong
small index property, which have been established for many known
structures. The relevant results are the following:

\begin{theorem}[{\cite[{Theorem~2.2, p.~228}]{Rubin1994}}]%
                                   \label{thm:Rubin-group-categoricity}
If $G=\Aut(\mathbb{A})$ and $G'=\Aut(\mathbb{B})$ are automorphism
groups of countable \nbdd{\aleph_0}categorical structures without
algebraicity and \inbr{the automorphism group of} $\mathbb{A}$ has a weak
\nbdd{\forall\exists}interpretation, then every group isomorphism
$\phi\colon G\to G'$ is induced by conjugation.
\end{theorem}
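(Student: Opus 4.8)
The plan is to recover the action of $G$ on $A$ from the pure group structure of $G$, transport that recovery through the abstract isomorphism $\phi$, and match it against the analogous recovery of the action of $G'$ on $B$; the intertwining bijection $\theta\colon A\to B$ will drop out of this comparison. Concretely, I would try to show that $\phi$ carries point stabilizers to point stabilizers and then read off $\theta$ from this correspondence.

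First I fix, for each $a\in A$, the stabilizer $G_a=\lset{g\in G}{g(a)=a}$, and likewise $G'_b$ for $b\in B$. The identity $gG_ag^{-1}=G_{g(a)}$ holds automatically, so the whole theorem reduces to proving that $\phi$ maps each $G_a$ onto a stabilizer $G'_{\theta(a)}$ with $\theta(a)$ uniquely determined: applying $\phi$ to that identity then yields $\phi(g)(\theta(a))=\theta(g(a))$, which is exactly the conjugation relation. The absence of algebraicity is what makes $a\mapsto G_a$ injective and the recovery faithful: by the same reasoning used for the trivial-centre statement in Lemma~\ref{lem:hom-structures}, distinct points have distinct stabilizers and no stabilizer fixes spurious extra points, so a point is genuinely pinned down by its stabilizer.

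The heart of the matter, and the step into which the weak $\forall\exists$-interpretation is fed, is to describe the family $\set{G_a}$ — equivalently, the coordinatized action of $G$ on $A$ — in purely group-theoretic terms that survive an abstract isomorphism. The weak $\forall\exists$-interpretation of $\mathbb{A}$ furnishes exactly this: it interprets inside $G$, by formulas of $\forall\exists$ shape, a $G$-set together with its relational structure isomorphic as a permutation structure to $\gapply{G,A}$. Since $\phi$ is a group isomorphism it transports every first-order definable feature of $G$ to the matching feature of $G'$, so it carries this interpreted $G$-set to an isomorphic $G'$-set. Invoking $\aleph_0$-categoricity through Ryll-Nardzewski (finitely many orbits on each $A^n$, so that the interpreting formulas carry their intended meaning) together with the no-algebraicity of $\mathbb{B}$, the transported $G'$-set is canonically identifiable with the standard action of $G'$ on $B$ (the interpretation applied directly to $G'$ recovers $\gapply{G',B}$). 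Composing the two coordinatizations delivers $\theta$.

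The main obstacle is precisely the verification that this coordinatization recovers the action canonically enough to be unaffected by transport along an arbitrary $\phi$: a priori nothing forces $\phi$ to respect the point structure, and an abstract group isomorphism could in principle send stabilizers to subgroups of a completely different shape. Showing that the $\forall\exists$-definable coordinatization pins the point stabilizers down uniquely — so that $\phi$ is compelled to send point stabilizers to point stabilizers — is the technical core of Rubin's machinery, and it is here that the specific $\forall\exists$ form of the interpreting formulas, not merely their first-order definability, does the decisive work. Once that is secured, checking that $\theta$ is a bijection (run the same argument with $\phi^{-1}$ to obtain its inverse) and that it intertwines the two actions is a routine diagram chase from the conjugation identities above.
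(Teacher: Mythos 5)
This theorem is not proved in the paper at all: it is imported verbatim from Rubin (Theorem~2.2 of the cited work), and the paper deliberately treats the notion of weak \nbdd{\forall\exists}interpretation as an undefined black box, using only the theorem's conclusion as a starting point for the lifting results of section~\ref{sect:monoids}. So there is no in-paper proof to match your attempt against; the question is whether your sketch would stand on its own as a proof of Rubin's result, and it does not. Your own text concedes this: the passage beginning ``The main obstacle is precisely\dots'' and ending ``Once that is secured\dots'' identifies the technical core --- that an abstract isomorphism must carry point stabilizers to point stabilizers, i.e.\ that the \nbdd{\forall\exists}coordinatization is rigid under transport --- and then defers it entirely. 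That core is the content of Rubin's machinery (dozens of pages in the original); a proposal that outsources exactly this step is a proof plan, not a proof. The surrounding reductions you do carry out are fine and standard: no algebraicity indeed makes $a\mapsto G_a$ injective (if $G_a\subs G_b'$ held with $b\neq a$, then $b$ would lie in the algebraic closure of $\set{a}$, echoing the argument in Lemma~\ref{lem:hom-structures}(e)), and $gG_ag^{-1}=G_{g(a)}$ does reduce the conjugation identity to the stabilizer correspondence.

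There is also a concrete misstep beyond the deferral: you use the hypothesis symmetrically where it is not symmetric. The theorem assumes a weak \nbdd{\forall\exists}interpretation only for~$\mathbb{A}$; nothing is assumed about~$\mathbb{B}$ beyond countability, \nbdd{\aleph_0}categoricity and no algebraicity. Hence your claim that ``the interpretation applied directly to $G'$ recovers $\gapply{G',B}$'' is unjustified --- identifying the transported \nbdd{G'}set with the natural action of~$G'$ on~$B$ is itself a nontrivial argument resting on no algebraicity and \nbdd{\aleph_0}categoricity of~$\mathbb{B}$, and it is precisely why the asymmetric hypothesis suffices in Rubin's theorem. For the same reason your closing remark ``run the same argument with $\phi^{-1}$ to obtain its inverse'' is unavailable: the argument cannot be rerun from the $\mathbb{B}$ side, since $\mathbb{B}$ has no assumed interpretation. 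Bijectivity of~$\theta$ and surjectivity of the stabilizer correspondence must instead come out of the one-sided identification, which your sketch does not supply.
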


\begin{theorem}[{\cite[Corollary~2]{PaoliniShelahReconstructionWithStrongSIP}}]\label{thm:reconstruction-ssip}
If $G=\Aut(\mathbb{A})$ and $G'=\Aut(\mathbb{B})$ are automorphism
groups of countable \nbdd{\aleph_0}categorical structures without
algebraicity such that~$G$ and~$G'$ have the strong small index
property, then every group isomorphism $\phi\colon G\to G'$ is induced
by conjugation.
\end{theorem}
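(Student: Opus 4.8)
The plan is to exploit the strong small index property to upgrade the abstract isomorphism~$\phi$ to a topological one, and then to use the absence of algebraicity to recover the permutation action by identifying each point with its stabilizer. \emph{Step~1 (automatic continuity).} I would first note that, on a countable carrier, the open subgroups of~$G$ are precisely the subgroups of countable index: every open subgroup contains some pointwise stabilizer~$G_{(B)}$ of a finite set, and $[G:G_{(B)}]\le\aleph_0$ since $G/G_{(B)}$ injects into the countable set~$A^{\lvert B\rvert}$; the converse is exactly the small index property, which is part of the SSIP. As an abstract isomorphism preserves indices of subgroups, both $\phi$ and $\phi^{-1}$ carry countable-index subgroups to countable-index subgroups, hence open subgroups to open subgroups. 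Since the pointwise stabilizers of finite sets form a neighbourhood base of the identity, this already makes~$\phi$ a homeomorphism, so from here on $\phi$ is a topological group isomorphism.

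\emph{Step~2 (recovering the action).} The absence of algebraicity supplies three identities that drive the reconstruction. Because the definable closure of a finite set~$B$ lies inside its algebraic closure, which by hypothesis equals~$B$, one has $G_{(B)}\subs G_{(a)}$ if and only if $a\in B$; in particular $a\mapsto G_{(a)}$ is injective and $\mathrm{Fix}(G_{(a)})=\set{a}$. Conjugation gives $g\,G_{(a)}\,g^{-1}=G_{(g(a))}$, and hence the normalizer of a point stabilizer is $N_G(G_{(a)})=G_{\set{a}}=G_{(a)}$, so point stabilizers are self-normalizing. The heart of the matter is to convert ``being a point stabilizer'' into a condition phrased purely in terms of the topological group, so that it is preserved by~$\phi$. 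For this I would use the SSIP sandwich $G_{(B)}\subs U\subs G_B$, valid for every open~$U$, to recover intrinsically the sub-poset of pointwise stabilizers inside the lattice of open subgroups under inclusion; by the first identity this poset is anti-isomorphic to the poset of finite subsets of~$A$, so the point stabilizers are exactly its maximal proper elements. Transporting this characterization through~$\phi$ then carries the point stabilizers of~$G$ bijectively onto those of~$G'$, and, by the analogous first identity for~$\mathbb{B}$, these correspond bijectively to the points of~$B$. This yields a bijection $\theta\colon A\to B$ by letting $\theta(a)$ be the unique point with $\phi(G_{(a)})=G'_{(\theta(a))}$.

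\emph{Step~3 (conjugation).} It remains to check $\phi(g)=\theta\circ g\circ\theta^{-1}$. Applying~$\phi$ to $g\,G_{(a)}\,g^{-1}=G_{(g(a))}$ and using the defining property of~$\theta$ gives $\phi(g)\,G'_{(\theta(a))}\,\phi(g)^{-1}=G'_{(\theta(g(a)))}$, while the conjugation identity inside~$G'$ gives $\phi(g)\,G'_{(\theta(a))}\,\phi(g)^{-1}=G'_{(\phi(g)(\theta(a)))}$. As $b\mapsto G'_{(b)}$ is injective, we obtain $\theta(g(a))=\phi(g)(\theta(a))$ for all~$g$ and~$a$, which is precisely $\phi(g)=\theta\circ g\circ\theta^{-1}$; thus~$\phi$ is induced by conjugation.

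\emph{Main obstacle.} The genuinely delicate point is the intrinsic characterization of point stabilizers in Step~2: among all open self-normalizing subgroups one must single out those of ``support one'', and self-normalization alone does not suffice, since a rigidly placed pair of points gives a self-normalizing pointwise stabilizer of a two-element set. This is exactly where the full force of the SSIP is required, as its sandwich description constrains the shape of every open subgroup tightly enough to reconstruct the pointwise-stabilizer poset, and hence its maximal proper elements, from the topological group alone. (Alternatively, one could invoke Coquand's theorem to extract a first-order bi-interpretation from the topological isomorphism and then use the absence of algebraicity to collapse it to a definable bijection; that dimension collapse is the analogous hard step.)
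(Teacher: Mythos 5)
First, a point of comparison: the paper does not prove this statement at all---it is imported verbatim as Corollary~2 of Paolini--Shelah---so your proposal has to be judged as a standalone proof of that external result. Within it, Steps~1 and~3 are sound. The SIP (contained in the SSIP) does yield that the open subgroups are exactly the countable-index subgroups, whence $\phi$ is a homeomorphism; and the Step~3 computation correctly converts a \nbdd{\phi}equivariant bijection between point stabilizers into the conjugation formula, using $g G_{(a)} g^{-1} = G_{(g(a))}$ and the injectivity of $b\mapsto G'_{(b)}$, both of which are valid consequences of the absence of algebraicity, as you verify.

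The gap is exactly where you locate it, and you do not close it: Step~2 never produces the promised intrinsic characterization of the point stabilizers. The sandwich $G_{(B)}\subseteq U\subseteq G_B$ attaches \emph{some} finite set to each open subgroup, but nothing in your argument shows that this set, or the family of pointwise stabilizers as a whole, is recoverable from the topological group alone. The natural candidate for the support of $U$, namely $\lset{a\in A}{\text{the } U\text{-orbit of } a \text{ is finite}}$, refers to the action on $A$---precisely what is being reconstructed---and the lattice-theoretic move you make (taking maximal proper elements of ``the poset of pointwise stabilizers'') presupposes that this sub-poset has already been singled out inside the lattice of all open subgroups. You rightly observe that self-normalization fails (rigid pairs), but then assert without proof that the SSIP sandwich ``constrains the shape of every open subgroup tightly enough''; that assertion is the entire mathematical content of the Paolini--Shelah paper and cannot be taken as given. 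At a minimum one must show that every open $U$ admits a \emph{canonical} minimal finite $B$ with $G_{(B)}\subseteq U\subseteq G_B$ and that $U\mapsto B$ is definable from the topological group in a conjugation-equivariant way; establishing this is where SSIP and no algebraicity genuinely interact, and it is absent here. Your parenthetical alternative has the same status: Coquand's theorem (via Step~1) yields only first-order bi-interpretability, and the collapse from bi-interpretability to bi-definability is again the very theorem being proved, as the title of Paolini--Shelah indicates. So what you have is a plausible skeleton of the known proof with its load-bearing lemma missing.
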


Reformulated in terms of Definition~\ref{def:aut-homeo}, Rubin's
Theorem~\ref{thm:Rubin-group-categoricity} says that the automorphism
group of a countable \nbdd{\aleph_0}categorical structure
with a weak \nbdd{\forall\exists}interpretation and no algebraicity
has automatic action compatibility with respect to all countable
\nbdd{\aleph_0}categorical structures without algebraicity.
The more recent Theorem~\ref{thm:reconstruction-ssip} by Paolini and
Shelah states that the automorphism group of a countable
\nbdd{\aleph_0}categorical structure having SSIP and no algebraicity has
automatic action compatibility with respect to the class of exactly
these structures.
\par

It is worth noting that the assumption of absence of algebraicity
cannot easily be abandoned in
Theorems~\ref{thm:Rubin-group-categoricity}
and~\ref{thm:reconstruction-ssip}. Otherwise one could take some
countable \nbdd{\aleph_0}categorical~$\mathbb{A}$ with countable
signature, SSIP and transitive
automorphism group, e.g.\ $\mathbb{A}=(\mathbb{Q},<)$, and construct an
extension~$\mathbb{B}$ by adding a new element~$b$ to the carrier
and~$\set{b}$ as a unary relation so that~$b$ becomes
a fixed point of $\Aut(\mathbb{B})$. This construction preserves
\nbdd{\aleph_0}categoricity and SSIP, and although $\Aut(\mathbb{A})$
and $\Aut(\mathbb{B})$ would be isomorphic, they would not be action
isomorphic for $\Aut(\mathbb{A})$ is transitive, but $\Aut(\mathbb{B})$
fixes~$b$.
\par

The two theorems above will be our starting point to lift automatic action
compatibility from permutation groups to monoids and finally to
polymorphism clones of structures with certain additional properties in
section~\ref{sect:monoids}. Concrete examples where our approach applies
are given in section~\ref{sect:examples}.
\par

\section{Characterizing automatic homeomorphicity}
In the following we are going to give a characterization of automatic
homeomorphicity for transformation monoids in the situation when the
group of invertible elements lies dense. The importance of this
particular case for applications has been outlined
in~\cite[section~3.7, p.~3716]{Bodirsky}.
For this we develop an improved version of~\cite[Proposition~11,
p.~3720]{Bodirsky}, which does not require the assumption of a countable
carrier set. We thereby also eliminate any metric reasoning used to
obtain it, which ultimately seems to be an artifice going back to an idea
proposed by Lascar in~\cite[p.~31]{Lascar91}. Even though the product
topology (Tichonov topology) on~$A^A$ over a discrete space~$A$ is only
% an uncountable product of metrizable spaces with ≥2 points is not
% first-countable and hence not metrizable
metrizable if~$A$ is countable, a uniform argument can achieve the same
conclusion without restriction to the countable case. In particular such
a result can be obtained just applying interpolation on finite sets
without mentioning completions of metric spaces via equivalence
classes of Cauchy sequences.

The first step is a non-metric variant
of~\cite[Lemma~4.2, p.~142]{PechAutHomeoTransfMon}.
\begin{lemma}\label{lem:cont-impl-unif}
Let $M\subs \Ona{1}{A}$ and $M'\subs \Ona{1}{B}$ be transformation
monoids such that~$M$ has a dense subset~$G$ of invertibles and let
$\xi\colon M\to M'$ be a continuous monoid homomorphism. Then~$\xi$ is
uniformly continuous.
\end{lemma}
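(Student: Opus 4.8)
The plan is to unwind uniform continuity into a purely combinatorial condition on finite sets and then to exploit the dense group of invertibles to obtain it. Recall that a uniformity base on~$M$ consists of the entourages $\lset{(f_1,f_2)\in M^2}{f_1\restriction_J = f_2\restriction_J}$ with $J\subs A$ finite, and analogously for~$M'$ with finite $K\subs B$. Thus uniform continuity of~$\xi$ amounts to the assertion that for every finite $K\subs B$ there is a \emph{single} finite $J\subs A$ such that $f_1\restriction_J = f_2\restriction_J$ implies $\xi(f_1)\restriction_K = \xi(f_2)\restriction_K$ for all $f_1,f_2\in M$, whereas continuity only provides, for each fixed~$f$, such a~$J$ depending on~$f$. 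So the whole task is to produce one~$J$ that works simultaneously for every pair, and I would extract it from continuity at the single point~$\id_A$.

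Fix a finite $K\subs B$. Since~$\xi$ is a monoid homomorphism we have $\xi(\id_A)=\id_B$, so continuity of~$\xi$ at~$\id_A$ yields a finite set $J_0\subs A$ such that every $f\in M$ with $f\restriction_{J_0}=\id_A\restriction_{J_0}$ satisfies $\xi(f)\restriction_K=\id_B\restriction_K$; in words, if~$f$ fixes~$J_0$ pointwise then~$\xi(f)$ fixes~$K$ pointwise. I claim that~$J_0$ already serves as the required~$J$, and I first verify this for invertibles. If $g_1,g_2\in G$ agree on~$J_0$, then $g_2^{-1}\circ g_1\in M$ (note $g_2^{-1}\in M$ as~$g_2$ is invertible in~$M$) fixes~$J_0$ pointwise, because for $x\in J_0$ one has $g_2^{-1}(g_1(x))=g_2^{-1}(g_2(x))=x$. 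Using $\xi(\id_A)=\id_B$ one sees $\xi(g_2^{-1}\circ g_1)=\xi(g_2)^{-1}\circ\xi(g_1)$, and by the choice of~$J_0$ this map fixes~$K$ pointwise. Evaluating at each $y\in K$ gives $\xi(g_1)(y)=\xi(g_2)(y)$, that is $\xi(g_1)\restriction_K=\xi(g_2)\restriction_K$.

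It remains to upgrade this from~$G$ to all of~$M$ by density. Let $f_1,f_2\in M$ satisfy $f_1\restriction_{J_0}=f_2\restriction_{J_0}$. By continuity of~$\xi$ at~$f_1$ and at~$f_2$, choose finite sets $J_1,J_2\supseteq J_0$ such that agreement with~$f_i$ on~$J_i$ forces agreement of the image with~$\xi(f_i)$ on~$K$. Density of~$G$ in~$M$ then provides $g_1,g_2\in G$ with $g_1\restriction_{J_1}=f_1\restriction_{J_1}$ and $g_2\restriction_{J_2}=f_2\restriction_{J_2}$, whence $\xi(g_1)\restriction_K=\xi(f_1)\restriction_K$ and $\xi(g_2)\restriction_K=\xi(f_2)\restriction_K$. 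Since $J_0\subs J_1\cap J_2$ and $f_1,f_2$ agree on~$J_0$, the invertibles $g_1,g_2$ also agree on~$J_0$, so the previous paragraph yields $\xi(g_1)\restriction_K=\xi(g_2)\restriction_K$. Chaining the three equalities gives $\xi(f_1)\restriction_K=\xi(f_2)\restriction_K$, which is the desired uniform estimate.

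The conceptual heart, and the only genuine obstacle, is that continuity is an assertion at single points while uniform continuity demands one modulus~$J$ for all points at once. The group of invertibles removes this gap: inverses let me translate the behaviour of~$\xi$ near an arbitrary invertible point back to its behaviour near~$\id_A$, so that the single set~$J_0$ coming from continuity at the identity controls every pair of invertibles, and density transports the conclusion to arbitrary monoid elements. Notably, this route avoids any appeal to metrizability or to completions via Cauchy sequences, proceeding by finite interpolation alone and hence imposing no cardinality restriction on~$A$.
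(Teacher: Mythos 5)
Your proof is correct, and its heart---using group inverses to translate the behaviour of~$\xi$ back to a neighbourhood of the identity---is the same mechanism the paper uses; but the decomposition differs in a way worth noting. The paper avoids your two-stage structure (first the estimate on invertibles, then a density-plus-continuity upgrade) entirely: given $f_1,f_2\in M$ agreeing on the finite set~$C$ supplied by continuity at~$\id_A$, it picks a \emph{single} interpolant $g\in G$ with $g\restriction_C=f_1\restriction_C=f_2\restriction_C$ and observes that $g^{-1}\circ f_i$ fixes~$C$ pointwise, whence $\xi(g)^{-1}\circ\xi(f_i)=\xi(g^{-1}\circ f_i)$ fixes~$D$ pointwise and so $\xi(f_1)\restriction_D=\xi(g)\restriction_D=\xi(f_2)\restriction_D$. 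The point is that only one of the two functions being compared needs an invertible interpolant: composing with~$g^{-1}$ translates the monoid elements \emph{themselves} into the identity neighbourhood, so no separate invertible case is required. Your route instead establishes the uniform estimate on $G\times G$ first and then transfers it to~$M$ using continuity of~$\xi$ at the arbitrary points~$f_1$ and~$f_2$ together with two further interpolants on the enlarged sets $J_1,J_2$. Consequently the paper's argument consumes continuity only at~$\id_A$, so it in fact proves the formally stronger statement that continuity at the identity alone (for a homomorphism on a monoid with dense invertibles) implies uniform continuity, whereas your transfer step genuinely uses pointwise continuity at other points. Both arguments are metric-free, proceed by finite interpolation, and impose no cardinality restriction on the carrier, exactly as you observe; the paper's is simply tighter in both length and hypotheses actually used.
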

\begin{proof}
Consider any finite subset $D\subseteq B$ determining the basic entourage
$\lset{(h_1,h_2)\in M'^2}{h_1\restriction_D = h_2\restriction_D}$.
The set $\lset{h\in M'}{h\restriction_D = \id_B\restriction_D}$ is a
basic open neighbourhood of~$\id_B$. For~$\xi$ is continuous, the
preimage is an open neighbourhood of~$\id_A$, and thus contains a basic
open neighbourhood of~$\id_A$ given by some finite subset $C\subseteq A$:
\[\lset{g\in M}{g\restriction_C=\id_A\restriction_C}
\subseteq
\lset{g\in M}{\xi(g)\restriction_D = \id_B\restriction_D}.\]
We claim
\[
(\xi\times\xi)\fapply{\lset{(f_1,f_2)\in M^2}{f_1\restriction_C =
f_2\restriction_C}}
\subseteq
\lset{(h_1,h_2)\in M'^2}{h_1\restriction_D = h_2\restriction_D},
\]
proving uniform continuity. Namely, if $f_1,f_2\in M$ satisfy
$f_1\restriction_C=f_2\restriction_C$, then by density of~$G$ the basic
open neighbourhood $\lset{f\in M}{f\restriction_C=f_1\restriction_C}$
of~$f_1$ contains some invertible~$g\in G$. Hence $g\restriction_C =
f_1\restriction_C = f_2\restriction_C$, wherefore we have
$(g^{-1}\circ f_i )\restriction_C = \id_A\restriction_C$ for both
$i\in\set{1,2}$. By the above, we infer
\[(\xi(g)^{-1}\circ \xi( f_i))\restriction_D
  =\xi(g^{-1}\circ f_i)\restriction_D = \id_B\restriction_D,\]
or $\xi(g)\restriction_D = \xi(f_i)\restriction_D$ for $i\in\set{1,2}$.
Hence $\xi(f_1)\restriction_D = \xi(f_2)\restriction_D$.
\end{proof}

Now we give the previously advertised strengthened version
of~\cite[Proposition~11]{Bodirsky} having no restrictions on the carrier
sets.
\begin{proposition}\label{prop:extending-dense-groups}
Let $M\subs \Ona{1}{A}$ and $M'\subs \Ona{1}{B}$ be
transformation monoids having subsets $G\subseteq M$ and
$G'\subseteq M'$ of invertibles such that $M\subs\cl{G}$ and $M'$ is
closed. For any continuous monoid homomorphism $\xi\colon G\to G'$
there is a uniformly continuous monoid
homomorphism $\exi\colon M\to M'$ extending~$\xi$. If~$M$ and~$M'$
are closed, $G$ and~$G'$ are dense and~$\xi$ is a homeomorphic
isomorphism, then $\exi$ is a uniform homeomorphism and a monoid
isomorphism.
\end{proposition}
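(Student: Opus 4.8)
The plan is to upgrade $\xi$ to a uniformly continuous map on the dense subgroup, extend it by interpolation on finite sets, and then verify the algebraic identities by passing to the limit. First I would note that $G$ is itself a transformation monoid whose (trivially dense) subset of invertibles is all of $G$, so Lemma~\ref{lem:cont-impl-unif} applies to $\xi\colon G\to G'$ and shows that $\xi$ is already uniformly continuous. To define $\exi$ on $M$, fix $f\in M$; for every finite $D\subs B$ uniform continuity yields a finite $C\subs A$ (which I choose monotonically in $D$) such that $g_1\restriction_C=g_2\restriction_C$ forces $\xi(g_1)\restriction_D=\xi(g_2)\restriction_D$ for all $g_1,g_2\in G$. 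Density of $G$ in $M$ lets me pick $g\in G$ with $g\restriction_C=f\restriction_C$, and I set $\exi(f)\restriction_D\defeq\xi(g)\restriction_D$. The displayed implication makes this independent of the chosen $g$, and the monotone choice of $C$ makes the prescriptions for different $D$ compatible, so they glue to a single function $\exi(f)\in B^B$; concretely $\exi(f)$ is the limit of the values $\xi(g)$ as $g\in G$ runs through better and better interpolants of $f$.

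Next I would record the three required properties of $\exi$. Since every $\xi(g)$ lies in $G'\subs M'$ and $M'$ is closed, the limit $\exi(f)$ lies in $M'$, so $\exi$ maps $M$ into $M'$. Taking $g=f$ for $f\in G$ shows $\exi$ extends $\xi$. Uniform continuity of $\exi$ is immediate from the construction: for a given finite $D$ the associated $C$ works uniformly, since $f_1\restriction_C=f_2\restriction_C$ implies, via interpolants, $\exi(f_1)\restriction_D=\exi(f_2)\restriction_D$. Finally, $\exi$ is a monoid homomorphism: $\exi(\id_A)=\xi(\id_A)=\id_B$, and for $f_1,f_2\in M$ I approximate them on suitable finite sets by $g_1,g_2\in G$, use $\xi(g_1\circ g_2)=\xi(g_1)\circ\xi(g_2)$, and pass to the limit, invoking continuity of composition together with continuity of $\exi$ and density of $G$ to conclude $\exi(f_1\circ f_2)=\exi(f_1)\circ\exi(f_2)$.

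For the isomorphism statement I would argue by symmetry. Closedness of $M$ with density of $G$ gives $\cl{G}=M$, and likewise $\cl{G'}=M'$. As $\xi$ is an isomorphism, its inverse $\xi^{-1}\colon G'\to G$ is again a continuous monoid homomorphism, so the first part applies with the two monoids interchanged and yields a uniformly continuous monoid homomorphism $\cl{\xi^{-1}}\colon M'\to M$ extending $\xi^{-1}$. The composite $\cl{\xi^{-1}}\circ\exi\colon M\to M$ is continuous and agrees with $\id_M$ on the dense set $G$; as $M$ is Hausdorff, continuous maps agreeing on a dense set coincide, so $\cl{\xi^{-1}}\circ\exi=\id_M$, and symmetrically $\exi\circ\cl{\xi^{-1}}=\id_{M'}$. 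Hence $\exi$ is a bijection with uniformly continuous inverse $\cl{\xi^{-1}}$, i.e.\ a uniform homeomorphism, and being a bijective monoid homomorphism it is a monoid isomorphism. The main obstacle throughout is the extension step, namely guaranteeing that the interpolated values are mutually consistent and actually assemble into a member of the \emph{closed} monoid $M'$; this is exactly where uniform continuity (Lemma~\ref{lem:cont-impl-unif}) and closedness of $M'$ are used, whereas preservation of composition and the inverse clause are comparatively soft limiting arguments.
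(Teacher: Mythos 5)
Your construction of $\exi$ is the same as the paper's: apply Lemma~\ref{lem:cont-impl-unif} to $\xi$ itself (viewing~$G$ as a monoid whose set of invertibles is all of~$G$, hence trivially dense), then define $\exi(f)$ on each finite $D\subs B$ as $\xi(g)\restriction_D$ for a \nbdd{G}interpolant~$g$ of~$f$ on an associated finite $C\subs A$; your monotone choice $D\mapsto C_D$ replaces the paper's explicit consistency check for two competing choices, and both yield $\exi(f)\in\cl{G'}\subs M'$, the extension property and uniform continuity in the same way. Where you genuinely diverge is in the two remaining verifications, which occupy most of the paper's proof. For the homomorphism property the paper computes pointwise, choosing the interpolation set $\tilde{C}_{b'}=C_{b'}\cup f_1\fapply{C_b}$ so that $\exi(f_2\circ f_1)(b)=\xi(g_2\circ g_1)(b)=\exi(f_2)\apply{\exi(f_1)(b)}$; you instead pass to limits along nets of interpolants, which is correct but tacitly uses that composition is \emph{jointly} continuous on~$B^B$ (true for the pointwise topology over discrete~$B$: to control $h_1\circ h_2$ on a finite~$D$, control~$h_2$ on~$D$ and~$h_1$ on~$h_2\fapply{D}$) --- you should make this explicit, since it is exactly the finite-set bookkeeping the paper keeps visible. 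For bijectivity the paper again argues pointwise, choosing for $f=\overline{\xi^{-1}}(f')$ a special interpolant $\xi^{-1}(g')$ with~$g'$ interpolating~$f'$ on $D\cup\set{b}$; your equalizer argument ($\overline{\xi^{-1}}\circ\exi$ is continuous, agrees with~$\id_M$ on the dense set~$G$, and~$A^A$ is Hausdorff, so the agreement set is closed) is shorter and equally valid. The trade-off: the paper's computations produce the concrete recipe for evaluating $\exi(f)(b)$ via precomputed sets~$C_b$, which is reused verbatim in the proof of Lemma~\ref{lem:conj-grp-to-mon}, whereas your soft arguments minimize bookkeeping.

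One caveat. Your inference ``as $\xi$ is an isomorphism, its inverse $\xi^{-1}\colon G'\to G$ is again a continuous monoid homomorphism'' is not valid if ``isomorphism'' is read as abstract monoid isomorphism: a continuous bijective homomorphism need not have a continuous inverse, and no Polish open-mapping argument is available here, since the carriers are arbitrary and~$G$,~$G'$ need not be closed. The hypothesis must be read as~$\xi$ being a homeomorphic isomorphism (or at least~$\xi^{-1}$ continuous); the paper's own proof makes the same assumption implicitly when it produces a finite $D\subs B$ such that agreement of $g_1',g_2'\in G'$ on~$D$ forces $\xi^{-1}(g_1')\restriction_{C_b}=\xi^{-1}(g_2')\restriction_{C_b}$ --- that is uniform continuity of~$\xi^{-1}$, obtained from Lemma~\ref{lem:cont-impl-unif} applied to~$\xi^{-1}$. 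In every application in the paper (Lemma~\ref{lem:conj-grp-to-mon}, Proposition~\ref{prop:char-aut-homeo-dense-invertibles}) continuity of~$\xi^{-1}$ is indeed available, so your reading matches the intended one; just state it as part of the hypothesis rather than deriving it from bijectivity.
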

\begin{proof}
To define $\exi\colon M\to M'$ let $f\in M$ and $b\in B$ be given.
Let~$D\subs B$ be any finite set such that $b\in D$. For convenience, it
can always be chosen as~$\set{b}$, but we are not going to assume this.
The set $\beta_D\defeq \lset{(h_1,h_2)\in G'^2}{h_1\restriction_D =
h_2\restriction_D}$ is a basic entourage, given by~$D$.
By Lemma~\ref{lem:cont-impl-unif} we know that~$\xi$ is uniformly
continuous, hence there is an entourage~$\alpha$ such that
$(\xi\times\xi)\fapply{\alpha}\subseteq \beta_D$. Since every entourage
contains a basic entourage there is some finite set $C\subs A$ and  a basic entourage
$\alpha_C=\lset{(g_1,g_2)\in G^2}{g_1\restriction_C =
g_2\restriction_C}\subs\alpha$, which is hence mapped to~$\beta_D$.
%That is, for all $g_1,g_2\in G$ equality on~$C$,
%$g_1\restriction_C=g_2\restriction_C$, implies $\xi(g_1)\restriction_D =
%\xi(g_2)\restriction_D$.
Let $\tilde{C}\supseteq C$ be any possibly larger finite
subset of~$A$. As~$G$ is dense in~$M$, there is some group
element~$g\in G$ such
that~$g\restriction_{\tilde{C}} = f\restriction_{\tilde{C}}$. We
put $\exi(f)\restriction_D\defeq \xi(g)\restriction_D$.
Since $b\in D$, we have thus defined $\exi(f)(b)$.
Careful inspection shows that this is indeed well defined:
namely, if for $i\in\set{1,2}$ we have finite subsets $D_i\subs B$ with
$b\in D_i$, entourages $\alpha_i$ with
$(\xi\times\xi)\fapply{\alpha_i}\subs\beta_{D_i}$
and finite subsets $C_i\subs\tilde{C}_i$ of~$A$ with
$\alpha_{C_i}\subs\alpha_i$ and invertibles $g_i\in G$ with
$f\restriction_{\tilde{C}_i} = g_i\restriction_{\tilde{C}_i}$, then for
$D=D_1\cap D_2$ we need to verify that $\xi(g_1)\restriction_D =
\xi(g_2)\restriction_D$. We put $C\defeq C_1\cup C_2$ and, by
density of~$G$ in~$M$, we pick some~$g\in G$ such that
$f\restriction_{C} = g\restriction_{C}$.
Since $C_i\subs\tilde{C}_i$, it follows that
$g_i\restriction_{C_i} =f\restriction_{C_i}$. Likewise, we get
$g\restriction_{C_i}=f\restriction_{C_i}$ because $C_i\subs C$.
Consequently, $g_i\restriction_{C_i} = g\restriction_{C_i}$, so
$(g_i,g)\in \alpha_{C_i}\subs\alpha_i$ and thus,
$(\xi(g_i),\xi(g))\in (\xi\times\xi)\fapply{\alpha_i}\subs
\beta_{D_i}\subs\beta_D$.
This means $\xi(g_i)\restriction_D = \xi(g)\restriction_D$, which is
independent of the index~$i$.
Hence we have $\xi(g_1)\restriction_D = \xi(g_2)\restriction_D$, so that
$\xi(g_1)$ and $\xi(g_2)$ really give non\dash{}contradictory values for
$\exi(f)$ on~$D$. Note also that, by definition, on any finite~$D\subs B$,
the function~$\exi(f)$ coincides with~$\xi(g)$ for some~$g\in G$, so with
some member of~$G'$. Therefore, $\exi(f)\in \cl{G'}\subs\cl{M'}= M'$.
\par
If $g\in G$ and $b\in B$, then taking $D = \set{b}$ and some finite
subset $C\subs A$ with
$(\xi\times\xi)\fapply{\alpha_C}\subs\beta_{\set{b}}$ we are free to
choose~$g\in G$ as its own interpolant on~$C$, to define
$\exi(g)\restriction_{\set{b}}$ via $\xi(g)\restriction_{\set{b}}$. This
means $\exi(g) = \xi(g)$ for any $g\in G$, so~$\exi$ extends~$\xi$.
\par
To prove that~$\exi\colon M\to M'$ is a homomorphism we consider
$f_1,f_2\in M$ and any $b\in B$. We want to show that
$\exi(f_2\circ f_1)(b) = \exi(f_2)(b')$ where $b'\defeq \exi(f_1)(b)$.
According to the definition, we choose finite subsets
$C_b, C_{b'}\subs A$ such that
$(\xi\times\xi)\fapply{\alpha_{C_b}}\subs\beta_{\set{b}}$ and
$(\xi\times\xi)\fapply{\alpha_{C_{b'}}}\subs\beta_{\set{b'}}$. Moreover,
for $\tilde{C}_{b'}=C_{b'}\cup f_1\fapply{C_b}$ we take any $g_1,g_2\in
G$ such that $g_1\restriction_{C_b} = f_1\restriction_{C_b}$ and
$g_2\restriction_{\tilde{C}_{b'}} = f_2\restriction_{\tilde{C}_{b'}}$. It
follows that
\begin{align*}
\exi(f_2)(b')&= \xi(g_2)(b') = \xi(g_2)(\exi(f_1)(b))
= \xi(g_2)(\xi(g_1)(b))\\
&= \xi(g_2)\circ\xi(g_1)(b)= \xi(g_2\circ g_1)(b)  = \exi(f_2\circ f_1)(b)
\end{align*}
since for every $x\in C_b$ we have $f_2(f_1(x))=g_2(f_1(x))= g_2(g_1(x))$
by the construction of $\tilde{C}_{b'}$ as a superset
of~$f_1\fapply{C_b}$.
\par
To see that~$\exi$ is uniformly continuous, consider any finite
set~$D\subs B$ and the basic entourage~$\beta_D$ given by it. According
to the definition of~$\exi$ we choose a finite subset~$C\subs A$ such
that $(\xi\times\xi)\fapply{\alpha_C}\subs\beta_D$. Now if $f_1,f_2\in M$
fulfil $f_1\restriction_C = f_2\restriction_C$, then we can use any
$g\in G$ satisfying
$f_1\restriction_C = g\restriction_C=f_2\restriction_C$ to define
$\exi(f_1)$ and $\exi(f_2)$ on~$D$,
videlicet $\exi(f_1)(b) = \xi(g)(b) = \exi(f_2)(b)$ for every $b\in D$.
This proves $\exi(f_1)\restriction_D = \exi(f_2)\restriction_D$ and hence
uniform continuity.
\par
Finally, we discuss the situation where~$\xi$ is a bijection with
inverse~$\xi^{-1}$. Putting $f=\overline{\xi^{-1}}(f')$ we  have to check
that $\exi\apply{f} = f'$ for every~$f'\in M'$. Consider any~$b\in B$ and
pick a finite subset $C_b\subs A$ such that
$(\xi\times\xi)\fapply{\alpha_{C_b}}\subs\beta_{\set{b}}$. The value
$\exi(f)(b)$ is determined by any \nbdd{G}interpolant for
$f=\overline{\xi^{-1}}(f')$ on~$C_b$; we need to choose this
interpolant in a special way. We find a finite subset \mbox{$D\subs B$}
such that for
$g_1',g_2'\in G'$ with $g_1'\restriction_D = g_2'\restriction_D$ we have
$\xi^{-1}(g_1')\restriction_{C_b} = \xi^{-1}(g_2')\restriction_{C_b}$.
We put $\tilde{D}= D\cup\set{b}$ and we take any~$g'\in G'$ satisfying
$g'\restriction_{\tilde{D}} = f'\restriction_{\tilde{D}}$. By the
definition of~$\overline{\xi^{-1}}$, we obtain
$f\restriction_{C_b} = \overline{\xi^{-1}}(f')\restriction_{C_b}=
\xi^{-1}(g')\restriction_{C_b}$, so $\xi^{-1}(g')\in G$ is a suitable
interpolant for~$f$ on~$C_b$. Using the definition of~$\exi(f)$, we can
infer $\exi(f)(b) = \xi(\xi^{-1}(g'))(b) = g'(b) = f'(b)$, where the last
equality holds since $b\in \tilde{D}$. This shows
$\exi(\overline{\xi^{-1}}(f')) (b) =f'(b)$, and as we have symmetric
assumptions, we can obtain $\overline{\xi^{-1}}(\exi(f)) = f$ for
$f\in M$ by a dual argument.
\end{proof}

Note that to understand how the extension~$\exi$ works, it is sufficient to
fix for each $b\in B$ one finite set $C_b\subs A$ such that
$(\xi\times\xi)\fapply{\alpha_{C_b}}\subs \beta_{\set{b}}$ (using the
continuity of~$\xi$ at~$\id_A$, see Lemma~\ref{lem:cont-impl-unif}).
This information can be `precomputed'. To see what $\exi(f)$ for
some~$f\in M$ does at~$b\in B$, one then simply has to find an
interpolant~$g\in G$ of~$f$ on~$C_b$ and to observe, how~$\xi(g)$
acts on~$b$.
\par

Next we prove the mentioned  characterization of automatic homeomorphicity, which is
closely related to the sufficient condition given
in~\cite[Lemma~12, p.~3720]{Bodirsky}. Our criterion is again independent
of the size of the underlying set and we shall see afterwards how to
derive the analogue of Lemma~12 of~\cite{Bodirsky} from it.
\begin{proposition}\label{prop:char-aut-homeo-dense-invertibles}
Let $\mathcal{K}$ be a class of closed monoids and
$\mathcal{G}=\set{G(M) \mid M\in\mathcal{K}}$ be the corresponding class
of groups~$G(M)$ of invertibles of monoids~$M\in \mathcal{K}$. Moreover,
we assume that $\lset{\cl{G}}{G\in\mathcal{G}}\subs\mathcal{K}$, where
$\cl{G}$ denotes the closure in the full transformation monoid.
Supposing that $M\subs \Ona{1}{A}$ is a closed monoid, its group
$G = G(M)$ of invertibles is dense in~$M$ and has automatic
homeomorphicity w.r.t.~$\mathcal{G}$, the following facts are
equivalent.
\begin{enumerate}[\upshape (a)]
\item\label{item:aut-unif-homeo}
      For any set~$B$ of the same cardinality as~$A$ and any
      monoid $M'\subs\Ona{1}{B}$ satisfying $M'\in \mathcal{K}$ and every
      monoid isomorphism $\theta\colon M\to M'$ it follows that
      $\theta\fapply{G}$ is dense in~$M'$ and~$\theta$ is a uniform
      homeomorphism.
\item\label{item:aut-homeo}
      $M$ has automatic homeomorphicity with respect to~$\mathcal{K}$.
\item\label{item:weak-aut-homeo}
      For any set~$B$ of the same cardinality as~$A$ and any closed
      monoid $M'\subs\Ona{1}{B}$ satisfying $M'\in \mathcal{K}$ and every
      monoid isomorphism $\theta\colon M\to M'$ it follows that~$\theta$
      is continuous.
\item\label{item:grp-extension}
      For any set~$B$ of the same cardinality as~$A$ and
      transformation monoids $M_1\subs M_2\subs\Ona{1}{B}$ such that
      $M_1, M_2\in\mathcal{K}$ and for any monoid isomorphisms
      $\phi_i\colon M\to M_i$ for $i\in\set{1,2}$ the following
      implication holds:
      \[ \phi_1\Restriction_G = \phi_2\Restriction_G \implies
         M_1= M_2 \land \phi_1=\phi_2.\]
\end{enumerate}
\end{proposition}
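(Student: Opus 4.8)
The plan is to close the cycle of implications (a)\,$\Rightarrow$\,(b)\,$\Rightarrow$\,(c)\,$\Rightarrow$\,(d)\,$\Rightarrow$\,(a), so that all four statements become equivalent. Two of these arrows are essentially formal. For (a)\,$\Rightarrow$\,(b) I would simply recall that every uniform homeomorphism is a homeomorphism, so the conclusion of~(a) that each admissible isomorphism $\theta\colon M\to M'$ is a uniform homeomorphism already witnesses automatic homeomorphicity of~$M$ with respect to~$\mathcal{K}$. For (b)\,$\Rightarrow$\,(c) I would note that every member of~$\mathcal{K}$ is closed by hypothesis, so the extra closedness demanded in~(c) is vacuous, and a homeomorphism is of course continuous; hence~(b) yields~(c) verbatim.

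The first genuine step is (c)\,$\Rightarrow$\,(d). Here I am handed monoids $M_1\subs M_2$ with $M_1,M_2\in\mathcal{K}$ and isomorphisms $\phi_i\colon M\to M_i$ agreeing on~$G$, and I must force $M_1=M_2$ and $\phi_1=\phi_2$. Since the members of~$\mathcal{K}$ are closed, (c) applies to each~$\phi_i$ and makes it continuous. Viewing $\phi_1$ and $\phi_2$ as continuous maps into $\Ona{1}{B}$ carrying the Tichonov topology---which is Hausdorff because~$B$ is discrete---I observe that they coincide on the set~$G$, which is dense in~$M$. Two continuous maps into a Hausdorff space that agree on a dense subset are equal, so $\phi_1=\phi_2$ throughout~$M$; taking images then gives $M_1=\phi_1\fapply{M}=\phi_2\fapply{M}=M_2$, as required.

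The heart of the argument is (d)\,$\Rightarrow$\,(a). Given an equipotent closed $M'\in\mathcal{K}$ and an isomorphism $\theta\colon M\to M'$, I first use that a monoid isomorphism maps invertibles onto invertibles, so $G'\defeq\theta\fapply{G}=G(M')$ lies in~$\mathcal{G}$ and $\xi\defeq\theta\restriction_G\colon G\to G'$ is a group isomorphism. Automatic homeomorphicity of~$G$ with respect to~$\mathcal{G}$ turns~$\xi$ into a homeomorphism, in particular a continuous one. The standing hypothesis $\lset{\cl{G}}{G\in\mathcal{G}}\subs\mathcal{K}$ now makes $\cl{G'}$ a closed monoid in~$\mathcal{K}$ in which~$G'$ lies dense, so Proposition~\ref{prop:extending-dense-groups}, applied to the closed monoids~$M$ and~$\cl{G'}$ with their dense groups of invertibles $G,G'$ and the continuous isomorphism~$\xi$, produces an extension $\exi\colon M\to\cl{G'}$ that is simultaneously a monoid isomorphism and a uniform homeomorphism. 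Because $G'=G(M')\subs M'$ and $M'$ is closed, one has $\cl{G'}\subs M'$ with both monoids in~$\mathcal{K}$, and $\exi\Restriction_G=\xi=\theta\Restriction_G$. Feeding $M_1=\cl{G'}\subs M_2=M'$ together with $\phi_1=\exi$ and $\phi_2=\theta$ into~(d) forces $\cl{G'}=M'$ and $\exi=\theta$: the former says that $\theta\fapply{G}=G'$ is dense in~$M'$, while the latter identifies~$\theta$ with the uniform homeomorphism~$\exi$, which is exactly~(a).

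I expect the main obstacle to be precisely this last implication (d)\,$\Rightarrow$\,(a), and more specifically the idea of invoking~(d) with the \emph{canonically constructed} extension $\exi\colon M\to\cl{G'}$ as one of the two maps and the \emph{given} isomorphism $\theta\colon M\to M'$ as the other. Recognizing that these two target monoids are nested, $\cl{G'}\subs M'$---which is why~(d) is phrased for an inclusion rather than for an equality---and that the smaller one~$\cl{G'}$ is forced into~$\mathcal{K}$ by the standing hypothesis, is the crucial structural observation; once it is in place, (d) collapses the two maps in a single stroke and delivers both density of the invertibles and uniform homeomorphicity of~$\theta$. Everything else, including the Hausdorff uniqueness argument in (c)\,$\Rightarrow$\,(d) and the two formal arrows, is routine.
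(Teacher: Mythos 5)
Your proposal is correct and takes essentially the same route as the paper: the identical cycle (a)\,$\Rightarrow$\,(b)\,$\Rightarrow$\,(c)\,$\Rightarrow$\,(d)\,$\Rightarrow$\,(a), with the decisive step in (d)\,$\Rightarrow$\,(a) likewise consisting of extending $\theta\restriction_G$ via Proposition~\ref{prop:extending-dense-groups} to $\exi\colon M\to\cl{G'}$ and feeding the nested pair $\cl{G'}\subs M'$ with $\phi_1=\exi$, $\phi_2=\theta$ into~(d). The only cosmetic difference is in (c)\,$\Rightarrow$\,(d), where you invoke the general fact that continuous maps into the Hausdorff space $\Ona{1}{B}$ agreeing on a dense subset coincide, while the paper unwinds that same equalizer argument by hand, pointwise, via interpolation of~$f$ by group elements on finite sets.
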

Note that every $M\in\mathcal{K}$ can be understood as an endomorphism
monoid of some relational structure~$\mathbb{A}$; $G(M)$ is then the
automorphism group of that structure and thus a closed permutation
group. So the requirement that~$G$ have automatic homeomorphicity
w.r.t.~$\mathcal{G}$ is reasonable. Moreover, $\cl{G(M)}$ is always a
submonoid of the monoid of self\dash{}embeddings of~$\mathbb{A}$, and
if~$\mathbb{A}$ is homogeneous, both monoids are actually equal (see
Lemma~\ref{lem:hom-structures}\eqref{item:emb-dense}). Thus,
if~$\mathcal{K}$ is the collection of all endomorphism monoids of a
given class of homogeneous relational structures, then the condition
that $\lset{\cl{G}}{G\in\mathcal{G}}\subs\mathcal{K}$ means that for
each of these structures, $\mathcal{K}$ also contains the corresponding
monoid of self\dash{}embeddings.
Furthermore, if $\mathcal{K}$ consists of the endomorphism
monoids of all possible relational structures on sets of a certain size,
i.e., up to isomorphism, $\mathcal{K}$ is the collection of all closed
transformation monoids on a fixed set, then the assumption
$\lset{\cl{G}}{G\in\mathcal{G}}\subs\mathcal{K}$ is always satisfied
(cf.\ Lemma~\ref{lem:hom-structures}\eqref{item:emb-cmplmt}).
\begin{proof}[Proof of
Proposition~\ref{prop:char-aut-homeo-dense-invertibles}]
As `$\eqref{item:aut-unif-homeo}\Rightarrow
\eqref{item:aut-homeo}\Rightarrow\eqref{item:weak-aut-homeo}$' is
evident we start with the implication
`$\eqref{item:weak-aut-homeo}\Rightarrow \eqref{item:grp-extension}$'.
Assume condition~\eqref{item:weak-aut-homeo} and let
$M_1\subs M_2\subs\Ona{1}{B}$ be transformation monoids on a
set~$B$ of the same cardinality as~$A$ such that $M_1,
M_2\in\mathcal{K}$, so $M_1, M_2$ are closed.
Suppose we have monoid isomorphisms $\phi_i\colon M\to M_i$ that coincide
on~$G$. It follows that~$\phi_i$ is continuous. Now consider any~$f\in M$
and any~$b\in B$. The set $\lset{h'\in M_i}{h'(b)=\phi_i(f)(b)}$ is a
basic open neighbourhood of~$\phi_i(f)$, so, as~$\phi_i$ is continuous,
its preimage $\lset{h\in M}{\phi_i(h)(b)=\phi_i(f)(b)}$ is an open
neighbourhood of~$f$ and hence contains a basic open neighbourhood
of~$f$. Thus, there is a finite set $X_i\subs A$ such that for any finite
$Y\subs A$ with $X_i\subs Y$ we have
\[f\in V_Y = \lset{h\in M}{h\restriction_Y=f\restriction_Y}\subs
\lset{h\in M}{\phi_i(h)(b) = \phi_i(f)(b)}.\]
Now letting $Y=X_1\cup X_2$ we have this inclusion for both
$i\in\set{1,2}$. By density of~$G$ in~$M$, the basic open
neighbourhood~$V_Y$ of~$f$ has non\dash{}empty intersection with~$G$, so
there is some~$g\in G$ such that $g\restriction_Y = f\restriction_Y$ and
consequently
$\phi_1(f)(b) = \phi_1(g)(b) = \phi_2(g)(b) = \phi_2(f)(b)$, where the
middle equality follows from
$\phi_1\Restriction_G = \phi_2\Restriction_G$. This is true for
any~$b\in B$, so $\phi_1(f)=\phi_2(f)$; however, also~$f\in M$  was
arbitrary and~$\phi_1$ and~$\phi_2$ were surjective, so $M_1= M_2$ and
$\phi_1=\phi_2$.
\par
For the converse
`$\eqref{item:grp-extension}\Rightarrow\eqref{item:aut-unif-homeo}$' we
assume condition~\eqref{item:grp-extension} and consider any monoid
isomorphism $\theta\colon M\to M'$ where $M'\in\mathcal{K}$ is a
closed transformation monoid on~$B$, $\lvert B\rvert=\lvert A\rvert$.
Let $G'\subs M'$ be the subset of invertibles of~$M'$. Certainly,
$G'\in\mathcal{G}$. Clearly, $G'$ is dense in its closure~$\cl{G'}$,
which belongs to~$\mathcal{K}$ by the assumption on~$\mathcal{K}$.
Indeed, $G'$ is also the set of invertibles of~$\cl{G'}$.
Since~$M'$ is closed, we have~$\cl{G'}\subs\cl{M'}=M'$.
%Density implies $G'=\cl{G'}\cap \Sym(B)$ and $G = M\cap\Sym(A)$,
%so~$G$ and~$G'$ are closed permutation groups, and $G'\in\mathcal{G}$.
Moreover, as~$\theta$ is an isomorphism, $\theta\fapply{G}\subs G'$, and
likewise~$\theta^{-1}\fapply{G'}\subs G$, so $\theta\fapply{G} = G'$.
Therefore, $\theta\restriction_G^{G'}\colon G\to G'$ is a well\dash{}defined
monoid isomorphism; moreover, as~$G$ and~$G'$ are group reducts,
$\theta\restriction_G^{G'}$ actually is a group isomorphism onto a group
in~$\mathcal{G}$. For~$G$ has automatic homeomorphicity
w.r.t.~$\mathcal{G}$, the isomorphism $\theta\restriction_G^{G'}$ is
a homeomorphism. By Proposition~\ref{prop:extending-dense-groups}, there
is an extension $\xi\colon M\to \cl{G'}$ of~$\theta\restriction_G^{G'}$,
which is a monoid isomorphism and a uniform homeomorphism. Since~$\xi$
and~$\theta$ coincide on~$G$, assumption~\eqref{item:grp-extension}
entails $\cl{\theta\fapply{G}}=\cl{G'} = M'$ and $\theta=\xi$. In
particular, $\theta$ is a uniform homeomorphism.
\end{proof}

\begin{remark}\label{rem:class-G-not-always-needed}
Later it will be useful to observe that the assumptions that~$G$ have
automatic homeomorphicity w.r.t.~$\mathcal{G}$ and that
$\lset{\cl{H}}{H\in \mathcal{G}}\subs\mathcal{K}$ were only needed to
prove the implication
`$\eqref{item:grp-extension}\Rightarrow\eqref{item:aut-unif-homeo}$'.
The `forward' implications
`$\eqref{item:aut-unif-homeo}\Rightarrow\eqref{item:aut-homeo}
                         \Rightarrow\eqref{item:weak-aut-homeo}
                         \Rightarrow\eqref{item:grp-extension}$'
hold even without these preconditions, so for them the
class~$\mathcal{G}$ does not play any role.
\end{remark}

With the help of the following lemma, we can
reobtain~\cite[Lemma~12]{Bodirsky}.
\begin{lemma}\label{lem:inj-mon-endo}
 Suppose that $M\subseteq \Ona{1}{A}$ and
 $M_1\subs M_2\subseteq \Ona{1}{B}$ are transformation monoids on
 sets~$A$ and~$B$, respectively, and let $G\subs M$ be any
 subset such that
 $E_G \defeq \lset{\psi\in\End(M)}{\psi \text{ injective} \land
 \psi\restriction_G = \id_M\restriction_G} = \set{\id_M}$, that is,
 the only injective monoid endomorphism of~$M$
 fixing~$G$ pointwise is the identity.
 If $\phi\colon M\to M_2$ is a monoid isomorphism and
 $\xi\colon M\to M_1$ is an injective monoid homomorphism such that
 $\xi\Restriction_G= \phi\Restriction_G$, then
 $\xi\fapply{M} = M_1=M_2$ and $\xi = \phi$.
 \par
 Moreover, for a monoid $M\subs\Ona{1}{A}$, a set $G\subs M$ and any
 class~$\mathcal{K}$ of monoids such that
 $\mathcal{K}\supseteq \lset{\psi\fapply{M}}{\psi\in E_G}$,
 condition~\eqref{item:grp-extension} of
 Proposition~\ref{prop:char-aut-homeo-dense-invertibles} is equivalent
 to $E_G = \set{\id_M}$.
\end{lemma}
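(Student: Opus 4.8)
The plan is to prove the first (non-\enquote{Moreover}) assertion directly via a single composition trick, and then to read off the stated equivalence as a consequence of it together with some light bookkeeping. For the first assertion, suppose $\phi\colon M\to M_2$ is a monoid isomorphism and $\xi\colon M\to M_1$ an injective monoid homomorphism with $\xi\Restriction_G = \phi\Restriction_G$. The key idea is to consider the composite $\psi\defeq \phi^{-1}\circ\xi\colon M\to M$; since $M_1\subs M_2$ we may regard $\xi$ as landing in~$M_2$, so this is well defined. It is a monoid homomorphism as a composite of monoid homomorphisms, and it is injective because both~$\xi$ and the bijection~$\phi^{-1}$ are. For every $g\in G$ the hypothesis $\xi\Restriction_G=\phi\Restriction_G$ gives $\psi(g)=\phi^{-1}(\xi(g))=\phi^{-1}(\phi(g))=g$, so~$\psi$ fixes~$G$ pointwise, whence $\psi\in E_G=\set{\id_M}$. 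Thus $\phi^{-1}\circ\xi=\id_M$, and post-composing with~$\phi$ yields $\xi=\phi$ (as maps with values in~$M_2$). Finally $M_2=\phi\fapply{M}=\xi\fapply{M}\subs M_1\subs M_2$ forces $\xi\fapply{M}=M_1=M_2$.

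For the \enquote{Moreover} part I would establish both implications of the equivalence. The direction \enquote{$E_G=\set{\id_M}\Rightarrow$~\eqref{item:grp-extension}} is immediate from the first assertion: given isomorphisms $\phi_i\colon M\to M_i$ with $\phi_1\Restriction_G=\phi_2\Restriction_G$, apply that assertion with $\xi\defeq\phi_1$ (an injective homomorphism onto~$M_1$) and $\phi\defeq\phi_2$ to conclude $M_1=M_2$ and $\phi_1=\phi_2$, which is exactly the required implication.

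For the converse, assume~\eqref{item:grp-extension} and let $\psi\in E_G$ be arbitrary. I would instantiate the condition with $B\defeq A$, setting $M_2\defeq M=\id_M\fapply{M}$ and $M_1\defeq\psi\fapply{M}$. Both belong to~$\mathcal{K}$ by the standing assumption $\mathcal{K}\supseteq\lset{\psi'\fapply{M}}{\psi'\in E_G}$ (using $\id_M\in E_G$ for~$M_2$), and $M_1\subs M_2$ because $\psi\fapply{M}$ is a submonoid of~$M$. Taking $\phi_2\defeq\id_M$ and $\phi_1\defeq\psi\colon M\to\psi\fapply{M}$ (a monoid isomorphism onto its image, as~$\psi$ is an injective homomorphism), the two maps agree on~$G$ since~$\psi$ fixes~$G$; hence~\eqref{item:grp-extension} yields $\phi_1=\phi_2$, that is, $\psi=\id_M$. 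As~$\psi\in E_G$ was arbitrary, $E_G=\set{\id_M}$.

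The whole technical content sits in the composition trick of the first paragraph; the rest is merely checking that the hypotheses of the first assertion, respectively of~\eqref{item:grp-extension}, are met. The only points needing care are the bookkeeping that both $\psi\fapply{M}$ and~$M$ lie in~$\mathcal{K}$ and that choosing $B=A$ keeps the carriers equipotent, together with keeping straight the distinction between $\Restriction_G$ (equality of values on~$G$) and $\restriction_G$ (equality as maps into the full codomain) when passing between the hypothesis $\xi\Restriction_G=\phi\Restriction_G$ and the pointwise-fixing condition defining~$E_G$. I do not anticipate a genuine obstacle; the main risk is a direction error in the composition, which I would avoid by adhering throughout to the paper's right-to-left composition convention.
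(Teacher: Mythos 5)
Your proof is correct and follows essentially the same route as the paper's: the identical composition trick (the paper writes it as $\psi=\phi^{-1}\circ\iota\circ\xi$ with the inclusion $\iota\colon M_1\to M_2$ made explicit) yields $\psi\in E_G=\set{\id_M}$ and hence the first assertion, and your instantiation $M_2=M$, $M_1=\psi\fapply{M}$, $\phi_2=\id_M$, $\phi_1=\psi\restriction_M^{M_1}$ for the converse of the equivalence is exactly the paper's. Your extra bookkeeping (choosing $B=A$, noting $\id_M\in E_G$ to place $M$ in~$\mathcal{K}$) is sound and merely spells out what the paper leaves implicit.
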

\begin{proof}
Applying the identical monoid embedding $\iota\colon M_1\to M_2$, it
follows that $\psi\defeq \phi^{-1}\circ \iota\circ \xi$ is an injective
monoid endomorphism fixing every $g\in G$ since
$\psi(g) = \phi^{-1}(\xi(g)) = \phi^{-1}(\phi(g)) = g$.
Thus, $\psi=\id_M$, and hence $\phi = \phi\circ\psi=\iota\circ\xi$.
This means $M_2 = \phi\fapply{M} = \xi\fapply{M}\subs M_1\subs M_2$,
giving $\xi\fapply{M} = M_1 = M_2$ and $\xi=\phi$.
\par
The fact just shown clearly entails part~\eqref{item:grp-extension} of
Proposition~\ref{prop:char-aut-homeo-dense-invertibles} (for any class
of monoids); therefore, the latter is necessary given
$E_G = \set{\id_M}$. Conversely, knowing that
$\mathcal{K}\supseteq \lset{\psi\fapply{M}}{\psi\in E_G}$, for any
$\psi\in E_G$, we let $M_1 = \psi\fapply{M}$, $M_2 = M$,
$\psi_1 = \psi\restriction_M^{M_1}$ and $\psi_2 = \id_M$ in
Proposition~\ref{prop:char-aut-homeo-dense-invertibles}\eqref{item:grp-extension}.
We then conclude that $\psi\fapply{M} = M$, so $\psi$ is an isomorphism,
and $\psi = \psi_1 = \psi_2 = \id_M$.
\end{proof}

Lemma~12 of~\cite{Bodirsky} is the special case of the following result
where~$A$ is countably infinite.
\begin{corollary}\label{cor:automatic-homeo}
Supposing that $M\subs \Ona{1}{A}$ is a closed monoid, its group
$G \subs M$ of invertibles is dense in~$M$ and has automatic
homeomorphicity
and every injective monoid
endomorphism $\theta\colon M\to M$ with $\theta(g) = g$ for $g\in G$ is
the identity, then~$M$ has automatic homeomorphicity.
\end{corollary}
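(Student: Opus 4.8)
The plan is to recognise this corollary as the confluence of Proposition~\ref{prop:char-aut-homeo-dense-invertibles} and Lemma~\ref{lem:inj-mon-endo}, applied to the largest sensible scope class. Since~$M$ is closed, the convention in Definition~\ref{def:aut-homeo} makes the assertion `$M$ has automatic homeomorphicity' mean automatic homeomorphicity with respect to the class~$\mathcal{K}$ of \emph{all} closed transformation monoids on sets equipotent to~$A$. First I would fix this~$\mathcal{K}$ and verify that it meets the hypotheses of Proposition~\ref{prop:char-aut-homeo-dense-invertibles}: it is by definition a class of closed monoids, and writing $\mathcal{G}=\set{G(M')\mid M'\in\mathcal{K}}$ for the associated groups of invertibles, the closure~$\cl{G}$ of any $G\in\mathcal{G}$ in the full transformation monoid of its carrier is again a closed transformation monoid on a set equipotent to~$A$, whence $\lset{\cl{G}}{G\in\mathcal{G}}\subs\mathcal{K}$. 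This is precisely the situation described in the paragraph after Proposition~\ref{prop:char-aut-homeo-dense-invertibles}, so the containment holds automatically.

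Next I would dispatch the remaining hypotheses on~$M$ and~$G$. By assumption~$M$ is closed and its group $G=G(M)$ of invertibles is dense in~$M$; what remains is that~$G$ have automatic homeomorphicity with respect to~$\mathcal{G}$. This is immediate from the corollary's blanket hypothesis: $G$ has automatic homeomorphicity without restriction, i.e.\ with respect to the class of all closed permutation groups on sets equipotent to~$A$, and every member of~$\mathcal{G}$ is such a closed permutation group (being the group of invertibles of a monoid in~$\mathcal{K}$), so~$\mathcal{G}$ is a subclass and the property against the larger class yields it against~$\mathcal{G}$. With all preconditions in place, Proposition~\ref{prop:char-aut-homeo-dense-invertibles} tells me that statements \eqref{item:aut-unif-homeo}--\eqref{item:grp-extension} are equivalent; in particular, automatic homeomorphicity~\eqref{item:aut-homeo} of~$M$ follows as soon as I establish condition~\eqref{item:grp-extension} (note that, per Remark~\ref{rem:class-G-not-always-needed}, it is exactly the implication \eqref{item:grp-extension}$\Rightarrow$\eqref{item:aut-unif-homeo} that consumes the two preconditions just checked).

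The final step is to produce condition~\eqref{item:grp-extension}, and here the injectivity hypothesis does all the work through Lemma~\ref{lem:inj-mon-endo}. Taking $G=G(M)$, the hypothesis of the corollary says precisely that $E_G=\lset{\psi\in\End(M)}{\psi\text{ injective}\land\psi\restriction_G=\id_M\restriction_G}=\set{\id_M}$. Consequently $\lset{\psi\fapply{M}}{\psi\in E_G}=\set{M}$, and since~$M$ itself lies in~$\mathcal{K}$, the proviso $\mathcal{K}\supseteq\lset{\psi\fapply{M}}{\psi\in E_G}$ of Lemma~\ref{lem:inj-mon-endo} is trivially satisfied. The second part of that lemma then yields that condition~\eqref{item:grp-extension} is equivalent to $E_G=\set{\id_M}$, so it holds; feeding this back through Proposition~\ref{prop:char-aut-homeo-dense-invertibles} delivers automatic homeomorphicity of~$M$ with respect to~$\mathcal{K}$, as required.

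I expect no serious obstacle: the corollary is essentially a bookkeeping assembly of the two preceding results, and the only point demanding care is the correct unwinding of the `without restriction' conventions—identifying~$\mathcal{K}$ as all closed transformation monoids on equipotent carriers, and confirming that the scope class~$\mathcal{G}$ for the group is at once small enough to be covered by the group's unrestricted automatic homeomorphicity and large enough that $\lset{\cl{G}}{G\in\mathcal{G}}\subs\mathcal{K}$. The hypothesis $E_G=\set{\id_M}$ conveniently collapses the image-class $\lset{\psi\fapply{M}}{\psi\in E_G}$ to the singleton~$\set{M}$, which removes the only potentially delicate containment needed for Lemma~\ref{lem:inj-mon-endo}.
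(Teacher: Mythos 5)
Your proposal is correct and matches the paper's proof, which likewise takes $\mathcal{K}$ to be the class of all closed transformation monoids on carriers equipotent to~$A$, obtains condition~\eqref{item:grp-extension} from Lemma~\ref{lem:inj-mon-endo}, and concludes via statement~\eqref{item:aut-homeo} of Proposition~\ref{prop:char-aut-homeo-dense-invertibles}. The only (harmless) detour is that you route through the second part of Lemma~\ref{lem:inj-mon-endo} and verify the proviso $\mathcal{K}\supseteq\lset{\psi\fapply{M}}{\psi\in E_G}$, whereas the first part of that lemma already yields condition~\eqref{item:grp-extension} for any class of monoids, with no containment to check.
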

\begin{proof}
Choosing~$\mathcal{K}$ as the class of all closed transformation monoids
on carriers of the same cardinality as~$A$,
Lemma~\ref{lem:inj-mon-endo} demonstrates
condition~\eqref{item:grp-extension} of
Proposition~\ref{prop:char-aut-homeo-dense-invertibles}, so the claim
follows from statement~\eqref{item:aut-homeo} of the same result.
\end{proof}

\begin{remark}\label{rem:equivalence-of-inj-mon-endo}
Let $\mathcal{K}$, $\mathcal{G}$, $M$ and $G\subs M$ be as in
Proposition~\ref{prop:char-aut-homeo-dense-invertibles} and assume
$\mathcal{K}\supseteq \lset{\psi\fapply{M}}{\psi\in E_G}$ for
$E_G =\lset{\psi\in\End(M)}{\psi \text{ injective} \land
 \psi\restriction_G = \id_M\restriction_G}$.
It follows from Proposition~\ref{prop:char-aut-homeo-dense-invertibles}
and Lemma~\ref{lem:inj-mon-endo} that $E_G=\set{\id_M}$ is
actually equivalent to~$M$ having automatic homeomorphicity with respect
to~$\mathcal{K}$.
\par
Note that under the assumptions of Corollary~\ref{cor:automatic-homeo}
it does not automatically follow that the implication of the corollary
is an equivalence. This is because the endomorphisms $\psi\in E_G$ are
not necessarily closed maps, and therefore their images do not
necessarily belong to the class~$\mathcal{K}$ of closed transformation
monoids over sets equipotent with the carrier of~$M$. This is the same
type of complication that has made additional arguments necessary in
proving automatic homeomorphicity of $\End(\Q,\leq )$,
see~\cite[Lemma~4.1, p.~79]{BehReconstructingTopologyOnRationals} and
the discussion next to it.
\end{remark}

\section{Stronger reconstruction for monoids and clones}%
\label{sect:monoids}\label{sect:clones}
In this section we will show how to lift automatic action compatibility
from permutation groups first to endomorphism monoids and then to
clones. At our point of departure, we recall that
Rubin in~\cite[Theorem~2.2, p.~228]{Rubin1994} shows that any
\nbdd{\aleph_0}categorical structure without algebraicity
is \emph{group categorical} with respect to the class of all such
structures, provided it has weak
\nbdd{\forall\exists}interpretations. The exact meaning of this is
stated in Theorem~\ref{thm:Rubin-group-categoricity}, saying that
for every isomorphism $\phi\colon \Aut(\mathbb{A})\to\Aut(\mathbb{B})$
between the automorphism groups of countable
\nbdd{\aleph_0}categorical structures without algebraicity where
$\mathbb{A}$ has a weak \nbdd{\forall\exists}interpretation there is a
bijection $\theta\colon A\to B$ between the carrier sets~$A$ and~$B$
of~$\mathbb{A}$ and~$\mathbb{B}$, respectively, such that
$\phi(g) = \theta\circ g\circ \theta^{-1}$ for all
$g\in\Aut(\mathbb{A})$.
\par

On the other hand from Proposition~\ref{prop:extending-dense-groups},
we have that for transformation monoids $M\subs \Ona{1}{A}$ and
$M'\subs \Ona{1}{B}$ with dense groups of invertibles~$G$ and~$G'$ on
sets~$A$ and~$B$, any topological isomorphism $\xi\colon G \to G'$ can be
extended to a uniform homeomorphism and isomorphism $\exi\colon M\to M'$.
With the additional knowledge about~$\xi$ obtained from results such
as Theorem~\ref{thm:Rubin-group-categoricity} we can describe even more
precisely how $\exi\apply{f}$ for $f\in M$ will look like.

\begin{lemma}\label{lem:conj-grp-to-mon}
 Assume that $M\subseteq \Ona{1}{A}$ and $M'\subseteq \Ona{1}{B}$ are
 closed transformation monoids on carrier sets $A$ and $B$ with
 subsets $G\subseteq M$ and $G'\subs M'$ of invertible elements such
 that~$G$ is dense in~$M$.
 Moreover, let $\phi\colon M\to M'$ be a monoid isomorphism. If
 \begin{enumerate}[\upshape (i)]
  \item\label{item:test-equality-on-groups}
        for any isomorphism $\psi\colon M\to \cl{G'}$ onto the
        closure of~$G'$ in~$\Ona{1}{B}$ the condition
        $\psi\Restriction_G = \phi\Restriction_G$
        implies
        $\psi(f) = \phi(f)$ for all $f\in M$, and
  \item\label{item:restricted-grp-iso-is-induced}
        the isomorphism $\xi=\phi\restriction_G^{G'}\colon G\to G'$
        is induced by conjugation by some bijection $\theta\colon A\to B$,
 \end{enumerate}
then the latter fact extends to the monoid isomorphism~$\phi$; in fact,
$\phi$ is induced by conjugation by the same~$\theta\colon A\to B$.
\end{lemma}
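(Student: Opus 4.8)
The plan is to combine the two hypotheses through the extension machinery of Proposition~\ref{prop:extending-dense-groups}. I would start by observing that $\xi = \phi\restriction_G^{G'}\colon G\to G'$ is a group isomorphism which, by hypothesis~\eqref{item:restricted-grp-iso-is-induced}, is induced by conjugation by~$\theta$, namely $\xi(g) = \theta\circ g\circ\theta^{-1}$ for all $g\in G$. As remarked at the end of the preliminaries, any isomorphism induced by conjugation is automatically a (uniform) homeomorphism, hence in particular $\xi$ is continuous. This is the crucial point that lets me feed $\xi$ into Proposition~\ref{prop:extending-dense-groups}.

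Next I would apply Proposition~\ref{prop:extending-dense-groups} to the continuous isomorphism $\xi\colon G\to G'$. Its hypotheses are met: $G$ is dense in~$M$ (so $M\subs\cl G$), and I can take the target monoid to be $\cl{G'}$, the closure of~$G'$, which is closed by construction and has $G'$ as a dense set of invertibles. The proposition yields a uniformly continuous monoid homomorphism $\psi\defeq\exi\colon M\to\cl{G'}$ extending~$\xi$. Moreover, since $G$ and $G'$ are dense in $M$ and $\cl{G'}$ respectively and $\xi$ is an isomorphism, the proposition guarantees that this extension $\psi$ is itself a monoid isomorphism (and a uniform homeomorphism) onto~$\cl{G'}$.

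Now I would invoke hypothesis~\eqref{item:test-equality-on-groups}. The map $\psi\colon M\to\cl{G'}$ is an isomorphism onto the closure of~$G'$, and because $\psi$ extends $\xi=\phi\restriction_G^{G'}$ we have $\psi\Restriction_G = \phi\Restriction_G$. Hypothesis~\eqref{item:test-equality-on-groups} then forces $\psi(f)=\phi(f)$ for every $f\in M$, that is, $\psi=\phi$. In particular this identifies the image: $\phi\fapply M = \psi\fapply M = \cl{G'}$, consistent with $M'$ being closed and containing the dense group $G'$.

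It remains to verify that $\phi=\psi$ is induced by conjugation by the \emph{same} $\theta$. Here I would simply compute, for $f\in M$ and $b\in B$, the value $\psi(f)(b)$ using the explicit description of the extension given after Proposition~\ref{prop:extending-dense-groups}: one fixes a finite $C_b\subs A$, picks a $G$-interpolant $g\in G$ of $f$ on $C_b$, and reads off $\psi(f)(b)=\xi(g)(b)=\theta\circ g\circ\theta^{-1}(b)$. The one genuine thing to check — and I expect this to be the main (though routine) obstacle — is that the conjugation formula $\theta\circ f\circ\theta^{-1}$ is well defined independently of the interpolant and agrees with $\psi(f)$ at every point: since $g$ and $f$ agree on $C_b$ and $\theta^{-1}(b)$ can be arranged to lie in $C_b$ (by choosing $C_b\ni\theta^{-1}(b)$), we get $g(\theta^{-1}(b))=f(\theta^{-1}(b))$, whence $\xi(g)(b)=\theta(g(\theta^{-1}(b)))=\theta(f(\theta^{-1}(b)))=(\theta\circ f\circ\theta^{-1})(b)$. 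As $b$ was arbitrary, $\phi(f)=\theta\circ f\circ\theta^{-1}$ for all $f\in M$, so $\phi$ is induced by conjugation by the same $\theta$, as claimed.
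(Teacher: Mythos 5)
Your proposal is correct and takes essentially the same route as the paper's proof: continuity of~$\xi$ because it is induced by conjugation, extension to an isomorphism onto~$\cl{G'}$ via Proposition~\ref{prop:extending-dense-groups}, identification $\phi=\exi$ (hence $M'=\cl{G'}$) via hypothesis~\eqref{item:test-equality-on-groups}, and the pointwise computation with a \nbdd{G}interpolant of~$f$ on a finite set containing~$\theta^{-1}(b)$. Your only deviation---choosing $C_b$ to contain $\theta^{-1}(b)$ outright rather than interpolating on $\tilde{C}=C_b\cup\set{\theta^{-1}(b)}$ as the paper does---is immaterial, since enlarging~$C_b$ preserves the defining property $(\xi\times\xi)\fapply{\alpha_{C_b}}\subs\beta_{\set{b}}$.
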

Observe that with the help of Lemma~\ref{lem:inj-mon-endo} the first
condition of the preceding lemma can be replaced by the stronger
assumption that every injective monoid endomorphism~$\psi\colon M\to M$
fixing~$G$ pointwise is the identity. Moreover, it is clear from the
second condition that the sets~$A$ and~$B$ necessarily have to be
equipotent.
\begin{proof}
Invertibility is preserved under monoid isomorphisms, so
$\phi\fapply{G} = G'$, and thus the restriction $\xi\colon G\to G'$ is a
well\dash{}defined monoid isomorphism. By the second assumption it is a
uniform homeomorphism. The set of invertibles
of~$\cl{G'}$ is again~$G'$, and it is dense in the closed
monoid~$\cl{G'}$.
Thus, by Proposition~\ref{prop:extending-dense-groups},
the isomorphism~$\xi$ extends to a monoid isomorphism
$\exi\colon M\to\cl{G'}$ that also is a uniform homeomorphism.
Using the first assumption of the lemma, we infer that $\phi=\exi$ and
$\cl{G'}=M'$ since~$\phi$ and~$\exi$ are surjective.
Now, by our second assumption we can find some bijection
$\theta\colon A\to B$ inducing~$\xi$. It only remains to lift this
condition from~$\xi$ to~$\exi$ and thus to~$\phi$. This can be done by
only relying on the continuity of~$\exi$, but it is shorter to use the
explicit description of~$\exi$ from the proof of
Proposition~\ref{prop:extending-dense-groups}.
\par
Let $f\in M$ and $b\in B$. According to the construction given by
Proposition~\ref{prop:extending-dense-groups}, we take some finite
set~$C_b\subs A$ such that
$g_1\restriction_{C_b} = g_2\restriction_{C_b}$ implies
$\xi(g_1)(b) = \xi(g_2)(b)$ for any $g_1,g_2\in G$.
We put $\tilde{C}=C_b\cup\set{\theta^{-1}(b)}$ and consider any $g\in G$
such that $f\restriction_{\tilde{C}} = g\restriction_{\tilde{C}}$.
Then we observe
\[\phi(f)(b) = \exi(f)(b) = \xi(g)(b)
= \bigl(\theta\circ g\circ \theta^{-1}\bigr)(b)
= \theta\bigl(g\bigl(\theta^{-1}(b)\bigr)\bigr)
= \theta\bigl(f\bigl(\theta^{-1}(b)\bigr)\bigr),
\]
where the second equality holds by definition of~$\exi$, the third one
by the assumption on~$\xi$ and the last one since
$\theta^{-1}(b)\in\tilde{C}$.
This implies $\phi(f) = \theta\circ f\circ \theta^{-1}$.
\end{proof}

With the preceding lifting lemma we can transfer automatic action
compatibility from dense groups to monoids.

\begin{corollary}\label{cor:transfer-aut-act-comp}
Let $\mathcal{K}$ be a class of closed monoids and
$\mathcal{G}=\lset{G(M)}{M\in\mathcal{K}}$ be the corresponding class
of groups~$G(M)$ of invertibles of monoids~$M\in \mathcal{K}$. Moreover,
we assume that $\lset{\cl{G}}{G\in\mathcal{G}}\subs\mathcal{K}$, where
$\cl{G}$ denotes the closure in the full transformation monoid.
Supposing that $M\subs \Ona{1}{A}$ is a closed monoid with automatic
homeomorphicity w.r.t.~$\mathcal{K}$ and that its group
$G = G(M)$ of invertibles is dense in~$M$, then, provided~$G$ has
automatic action compatibility w.r.t.~$\mathcal{G}$, also~$M$ has
automatic action compatibility w.r.t.~$\mathcal{K}$.
\end{corollary}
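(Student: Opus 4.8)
The plan is to reduce everything to the lifting Lemma~\ref{lem:conj-grp-to-mon}. So I would fix an arbitrary closed monoid $M'\in\mathcal{K}$ on a carrier $B$ with $\lvert B\rvert=\lvert A\rvert$ together with an arbitrary monoid isomorphism $\phi\colon M\to M'$, and then merely check that the two hypotheses~\eqref{item:test-equality-on-groups} and~\eqref{item:restricted-grp-iso-is-induced} of that lemma are satisfied. Since $M$ and $M'$ are closed (the latter because $\mathcal{K}$ consists of closed monoids), $G$ is the dense group of invertibles of~$M$, and $G'\defeq G(M')$ is the group of invertibles of~$M'$, the standing hypotheses of Lemma~\ref{lem:conj-grp-to-mon} are already in place; once both numbered conditions are verified, the lemma produces a bijection $\theta\colon A\to B$ conjugating~$\phi$, and as $M'$ and $\phi$ were arbitrary, this is exactly automatic action compatibility of~$M$ w.r.t.~$\mathcal{K}$.

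Verifying condition~\eqref{item:restricted-grp-iso-is-induced} is the routine part. Because monoid isomorphisms preserve invertibility, $\phi\fapply{G}=G'$, so the restriction $\xi\defeq\phi\restriction_G^{G'}\colon G\to G'$ is a well\dash{}defined group isomorphism. As $M'\in\mathcal{K}$, we have $G'=G(M')\in\mathcal{G}$, and $G'$ lives on a set equipotent with the carrier~$A$ of~$G$; hence the assumed automatic action compatibility of~$G$ w.r.t.~$\mathcal{G}$ supplies a bijection $\theta\colon A\to B$ inducing~$\xi$ by conjugation, which is precisely~\eqref{item:restricted-grp-iso-is-induced}.

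The hard part will be condition~\eqref{item:test-equality-on-groups}, and here the key maneuver is to read it off from the automatic homeomorphicity of~$M$ through the characterization in Proposition~\ref{prop:char-aut-homeo-dense-invertibles}. First I would note that automatic action compatibility of~$G$ implies automatic homeomorphicity of~$G$ w.r.t.~$\mathcal{G}$, so the standing assumptions of that proposition hold and its condition~\eqref{item:grp-extension} becomes available (in fact, by Remark~\ref{rem:class-G-not-always-needed}, the implication from~\eqref{item:aut-homeo} to~\eqref{item:grp-extension} needs nothing about~$\mathcal{G}$ at all). Now, given any isomorphism $\psi\colon M\to\cl{G'}$ with $\psi\Restriction_G=\phi\Restriction_G$, I would instantiate~\eqref{item:grp-extension} with $M_1=\cl{G'}$ and $M_2=M'$: the closure assumption $\lset{\cl{G}}{G\in\mathcal{G}}\subs\mathcal{K}$ puts $\cl{G'}$ into~$\mathcal{K}$, closedness of~$M'$ gives $\cl{G'}\subs\cl{M'}=M'$, and $\psi,\phi$ are monoid isomorphisms agreeing on~$G$. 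Thus~\eqref{item:grp-extension} forces $\cl{G'}=M'$ and $\psi=\phi$, so in particular $\psi(f)=\phi(f)$ for all $f\in M$, establishing condition~\eqref{item:test-equality-on-groups}. With both conditions in hand, Lemma~\ref{lem:conj-grp-to-mon} completes the argument.
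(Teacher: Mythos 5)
Your proposal is correct and takes essentially the same route as the paper's own proof: both fix an arbitrary $M'\in\mathcal{K}$ and isomorphism $\phi\colon M\to M'$, verify hypothesis~\eqref{item:restricted-grp-iso-is-induced} of Lemma~\ref{lem:conj-grp-to-mon} from automatic action compatibility of~$G$ w.r.t.~$\mathcal{G}$ (using $G(M')\in\mathcal{G}$), and derive hypothesis~\eqref{item:test-equality-on-groups} from Proposition~\ref{prop:char-aut-homeo-dense-invertibles}\eqref{item:grp-extension} via the assumptions that $M$ has automatic homeomorphicity w.r.t.~$\mathcal{K}$ and that $\lset{\cl{G}}{G\in\mathcal{G}}\subs\mathcal{K}$. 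Your explicit instantiation $M_1=\cl{G'}\subs M_2=M'$ (with $\cl{G'}\subs M'$ by closedness of~$M'$) merely spells out what the paper leaves implicit, and your appeal to Remark~\ref{rem:class-G-not-always-needed} is a correct, if optional, refinement.
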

It is possible to marginally weaken the assumption that~$M$ have
automatic homeomorphicity w.r.t.~$\mathcal{K}$ because, in the proof,
condition~\eqref{item:test-equality-on-groups} of
Lemma~\ref{lem:conj-grp-to-mon} will instantiate the universally
quantified implication in statement~\eqref{item:grp-extension} from
Proposition~\ref{prop:char-aut-homeo-dense-invertibles} only for
specific pairs $M_1\subs M_2$ from~$\mathcal{K}$. However the
modification would be rather technical and probably have only very few
applications.
\begin{proof}[Proof of Corollary~\ref{cor:transfer-aut-act-comp}]
Given any $M'\in\mathcal{K}$ on some set~$B$ that is equipotent with~$A$
and any monoid isomorphism~$\phi\colon M\to M'$ we shall use
Lemma~\ref{lem:conj-grp-to-mon} to prove that~$\phi$ is induced by
conjugation. By the choice of~$M$ and~$\mathcal{K}$ both monoids are
closed and~$G\subs M$ is dense. Also, since $G' = G(M')\in\mathcal{G}$
and~$G$ has automatic action compatibility w.r.t.~$\mathcal{G}$, the
isomorphism $\phi\restriction_G^{G'}$ is induced by conjugation; so
condition~\eqref{item:restricted-grp-iso-is-induced} of
Lemma~\ref{lem:conj-grp-to-mon} is satisfied.
Condition~\eqref{item:test-equality-on-groups} now follows from
Proposition~\ref{prop:char-aut-homeo-dense-invertibles}%
            \eqref{item:grp-extension}
since $\lset{\cl{G}}{G\in\mathcal{G}}\subs\mathcal{K}$, $M$ has
automatic homeomorphicity w.r.t.~$\mathcal{K}$ and~$G$ has automatic
homeomorphicity w.r.t.~$\mathcal{G}$ for it even has automatic action
compatibility w.r.t.~$\mathcal{G}$.
\end{proof}

Proving, for certain clones, automatic action compatibility w.r.t.\ all
\nbdd{\aleph_0}categorical structures without algebraicity will be based
on the next theorem.
The technique to prove it is inspired by~\cite{Rubin2016} and uses the
assumption of being \emph{weakly directed} (this notion has appeared
in~\cite[10.1, p.~60]{Rubin2016} as \emph{semi\dash{}transitive}, but the
latter term has been introduced in~\cite{semitransitive} to designate a
different semigroup property recurring in a number of
articles,
e.g.~\cite{semitransitive5,semitransitive2,semitransitive3,semitransitive4,semitransitive6,semitransitive7}).
Hence, we say that an action of a semigroup~$S$ on a
set~$A$ is weakly directed if for all $a,b\in A$ there are $f,g\in S$ and
$c\in A$ such that $(f,c) \mapsto a$ and $(g,c)\mapsto b$.
We call a transformation semigroup weakly directed if its action by
evaluation at points of the underlying set has this property. Certainly
every transitive action is weakly directed. Moreover, a straightforward
inductive argument shows that a weakly directed action of~$S$ on a
non\dash{}empty set~$A$ for every $n\in\N$ and $a_1,\dotsc,a_n\in A$
allows for finding $f_1,\dotsc,f_n\in S$ and $a_0\in A$ such that
$(f_i,a_0)\mapsto a_i$ holds for all $1\leq i\leq n$.
% proof:
% n=0: Since A≠∅, for the empty sequence of
%      points, one can find the empty sequence of semigroup elements and
%      a₀∈A, that do not need to satisfy any condition since ¬1≤n.
% n=1: By weak directeness, for a₁, a₁ there are f₁,f₂∈S and a₀∈A
%      such that f₁(a₀) = a₁ and f₂(a₀) = a₁.
% n→n+1: Consider b₀ and f₁,…,fₙ∈S such that f₁(b₀)=a₁,…,fₙ(b₀)=aₙ given
%        by the induction hypothesis.
%        By weak directedness, there are a₀∈A, g,gₙ+₁∈S such that
%        gₙ+₁(a₀) = aₙ+₁ and g(a₀) = b₀. Hence, putting
%        g₁=f₁∘g,…,gₙ=fₙ∘g, we have g_i(a₀) = f_i(g(a₀)) = f_i(b₀) = a_i
%        for all 1≤i≤n.

\begin{theorem}\label{thm1:reconstruction}
Let $F\subs \Ops$ and $F'\subs \OA{B}$ be clones on carrier sets $A$, $B$ such that
$\Fn[1]{F}$ is weakly directed and let $\xi\colon F\to F'$ be a
surjective clone\dash{}homomorphism such that the restriction
$\xi\restriction_{\Fn[1]{F}}^{\Fn[1]{F'}}\colon \Fn[1]{F} \to \Fn[1]{F'}$
is given by conjugation with some bijection
$\theta\colon A \to B$, i.e.,
\[\forall f\in \Fn[1]{F}\colon\quad \xi\apply{f}=\theta \circ f \circ \theta^{-1},\]
then~$\xi$ is induced by conjugation by the same~$\theta$, so
\[\forall n\in \N\, \forall h\in \Fn{F}\colon \quad \xi\apply{h}=\theta
\circ h \circ \apply{\theta^{-1}\times \cdots \times \theta^{-1}}.\]
In particular, $\xi$ is a clone isomorphism and a uniform homeomorphism.
\end{theorem}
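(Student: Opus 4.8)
The plan is to leverage the weak directedness of $\Fn[1]{F}$ to reduce the behaviour of~$\xi$ on an arbitrary $n$-ary operation to its prescribed behaviour on unary operations, by probing the value of that operation at a single, carefully chosen argument. Concretely, I would first invoke the inductive strengthening of weak directedness recorded just before the theorem: for any $a_1,\dotsc,a_n\in A$ there exist $f_1,\dotsc,f_n\in\Fn[1]{F}$ and a common point $a_0\in A$ with $f_i(a_0)=a_i$ for every~$i$. This is the device that drives the whole argument.

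Next I would fix $n\in\N$, an operation $h\in\Fn{F}$ and a tuple $\bfa{b}=(b_1,\dotsc,b_n)\in B^n$, and aim to show $\xi(h)(\bfa{b})=\theta\bigl(h(\theta^{-1}(b_1),\dotsc,\theta^{-1}(b_n))\bigr)$. Putting $a_i\defeq\theta^{-1}(b_i)$, the weak directedness step supplies unary $f_1,\dotsc,f_n\in\Fn[1]{F}$ and a point $a_0\in A$ with $f_i(a_0)=a_i$; I then set $b_0\defeq\theta(a_0)$ and form the \emph{unary} composite $u\defeq h\circ\apply{f_1,\dotsc,f_n}\in\Fn[1]{F}$, which lies in~$F$ because~$F$ is a clone.

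The heart of the proof is to evaluate $\xi(u)(b_0)$ in two different ways. Using the hypothesis $\xi(u)=\theta\circ u\circ\theta^{-1}$ on the unary part, the first computation gives $\theta(u(a_0))=\theta(h(a_1,\dotsc,a_n))$. Using instead that~$\xi$ is a clone homomorphism, so that $\xi(u)=\xi(h)\circ\apply{\xi(f_1),\dotsc,\xi(f_n)}$, together with $\xi(f_i)(b_0)=\theta(f_i(a_0))=\theta(a_i)=b_i$, the second computation gives $\xi(h)(\bfa{b})$. Comparing the two expressions yields precisely the desired identity; since $\bfa{b}$ and~$h$ were arbitrary, $\xi$ is induced by conjugation by~$\theta$. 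The closing assertions then follow from the remarks in the preliminaries: a clone homomorphism induced by conjugation is injective and a clone isomorphism onto its image, so, being surjective by hypothesis, $\xi$ is a clone isomorphism, and every conjugation-induced isomorphism is automatically a uniform homeomorphism.

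I expect the only genuinely substantive point to be the use of weak directedness to realise the entire tuple $(a_1,\dotsc,a_n)$ as the image of one common argument~$a_0$ under unary operations; this is exactly what lets the single unary composite~$u$ read off the value of~$h$ at the desired tuple when it is evaluated at the one point~$b_0$, and it is the reason the weaker hypothesis of weak directedness suffices in place of transitivity. The subsequent manipulations with~$\theta$ and~$\theta^{-1}$ are routine bookkeeping.
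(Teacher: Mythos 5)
Your proposal is correct and takes essentially the same route as the paper's own proof: you form the unary composite $h\circ\apply{f_1,\dotsc,f_n}$ supplied by (the inductive strengthening of) weak directedness, evaluate its image under~$\xi$ at the point $\theta(a_0)$ once via the unary conjugation hypothesis and once via the homomorphism property, and conclude by surjectivity together with the preliminary remarks on conjugation\dash{}induced homomorphisms. The only difference is that the paper first disposes of the degenerate case $A=\emptyset$ in one line (there weak directedness supplies no common point~$a_0$, but both clones consist only of projections, so the claim is trivial), an edge case your argument tacitly assumes away.
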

Note that the following proof also works for nullary operations, which
only exist on non\dash{}empty carrier sets.
Moreover, there is absolutely no restriction on the cardinality of the
carriers~$A$ and~$B$ here.
\begin{proof}
If $A=\emptyset$, the assumed bijection~$\theta\colon A\to B$ ensures
that~$B=\emptyset$ and thus $F=F'=\OA{\emptyset}$, which contains only
projections. So the claim is trivially true.
\par
Now let~$A\neq \emptyset$,
let $n\in \N$, $h\in \Fn{F}$, and consider any $y_1,y_2,\ldots,y_n\in B$.
We put $a_i=\theta^{-1}\apply{y_i}$ for all $1\leq i\leq n$.
Since~$\Fn[1]{F}$ is weakly directed, there is some $a\in A$ and $g_1,\ldots,g_n\in \Fn[1]{F}$ such that $g_i(a)=a_i$ for all $1\leq i\leq n$.\par
Consider $f\defeq h\circ\apply{g_1,\ldots g_n}\in\Fn[1]{F}$. By the assumption on
$\xi\restriction_{\Fn[1]{F}}^{\Fn[1]{F'}}$, we have

\[\xi\apply{f}=\theta \circ f \circ \theta^{-1}= \theta \circ h \circ \apply{g_1,\ldots, g_n} \circ \theta^{-1},\]
on the other hand, since $\xi$ is compatible with $\circ$, we have
\[\xi\apply{f}=\xi\apply{h\circ \apply{g_1,\ldots,g_n}}= \xi(h)\circ\apply{\xi(g_1),\ldots,\xi(g_n)},\]
and again because of the assumption on
$\xi\restriction_{\Fn[1]{F}}^{\Fn[1]{F'}}$, we have
\[\xi\apply{f}=\xi\apply{h}\circ
\apply{\xi\apply{g_1},\ldots,\xi\apply{g_n}}= \xi(h)\circ\apply{\theta\circ
g_1\circ\theta^{-1},\ldots,\theta\circ g_n \circ \theta^{-1} }.\]
Finally, we evaluate $\xi(f)$ at $y\defeq \theta\apply{a}$:
\begin{align*}
\xi\apply{f}(y)&=\theta\apply{h\apply{g_1\apply{\theta^{-1}(y)},\ldots, g_n\apply{\theta^{-1}(y)}}}=\theta\apply{h\apply{g_1\apply{a},\ldots,g_n\apply{a}}}\\ &= \theta\apply{h\apply{a_1,\ldots,a_n}}=\theta\apply{h\apply{\theta^{-1}(y_1),\ldots,\theta^{-1}(y_n)}}
\end{align*}
and
\begin{align*}
\xi\apply{f}(y)&=\xi(h)
\apply{\theta\apply{g_1\apply{\theta^{-1}(y)}},\ldots,\theta\apply{g_n\apply{\theta^{-1}(y)}}}\\
&=
\xi(h)
\apply{\theta\apply{g_1(a)},\ldots,\theta\apply{g_n\apply{a}}}\\
&=\xi(h)\apply{\theta\apply{a_1},\ldots,\theta\apply{a_n}}=\xi\apply{h}\apply{y_1,\dots,y_n}.
\end{align*}
Because~$\xi$ is surjective, it is an isomorphism and a uniform
homeomorphism.
\end{proof}

\begin{remark}
If $F\subs \Ops$ is a closed clone where $\Fn[1]{F}$ is weakly directed
and $F'\subs\OA{B}$ is a clone that fails to be closed, then there is no
surjective clone homomorphism $\xi\colon F\to F'$ whose restriction
$\xi\restriction_{\Fn[1]{F}}^{\Fn[1]{F'}}$ to the monoid parts is induced
by some bijection $\theta\colon A\to B$. Otherwise,
Theorem~\ref{thm1:reconstruction} would imply that $\xi$ is a
homeomorphism and hence, $F' = \xi\fapply{F}$ would have to be closed.
\end{remark}

\begin{theorem}\label{thm2:reconstruction}
 Let~$\mathcal{K}$ and~$\mathcal{C}$ be classes of transformation
 monoids and clones, respectively, such that
 $\set{\Fn[1]{F'}\mid F'\in\mathcal{C}}\subseteq \mathcal{K}$.
 Moreover, let $F\subs \Ops$ be a clone with a
 weakly directed monoid~$\Fn[1]{F}$ having automatic action
 compatibility w.r.t.~$\mathcal{K}$.
 Then~$F$ has automatic action compatibility \inbr{and thus automatic
 homeomorphicity} w.r.t.\ the class~$\mathcal{C}$.
\end{theorem}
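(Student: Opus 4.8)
The plan is to reduce the statement to the two tools already at hand: the automatic action compatibility of the monoid part~$\Fn[1]{F}$, which will supply a suitable bijection~$\theta$, and Theorem~\ref{thm1:reconstruction}, which upgrades conjugation on the unary part to conjugation on the whole clone. No new construction is needed; the work lies entirely in plugging the right objects into these results.

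First I would fix an arbitrary $F'\in\mathcal{C}$ on a carrier set~$B$ equipotent with~$A$, together with a clone isomorphism $\phi\colon F\to F'$; by Definition~\ref{def:aut-homeo} it then suffices to produce a bijection $\theta\colon A\to B$ inducing~$\phi$ by conjugation. Since a clone isomorphism respects arities, sends projections to projections and is compatible with composition, its restriction $\psi\defeq\phi\restriction_{\Fn[1]{F}}^{\Fn[1]{F'}}\colon\Fn[1]{F}\to\Fn[1]{F'}$ to the unary fragments is a well\dash{}defined monoid isomorphism onto~$\Fn[1]{F'}$ (the unary projection $e_1^1$ playing the role of the monoid identity).

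Next, because $F'\in\mathcal{C}$, the hypothesis $\set{\Fn[1]{F'}\mid F'\in\mathcal{C}}\subseteq\mathcal{K}$ yields $\Fn[1]{F'}\in\mathcal{K}$, and $\Fn[1]{F'}$ lives on~$B$ with $\lvert B\rvert=\lvert A\rvert$. As $\Fn[1]{F}$ has automatic action compatibility with respect to~$\mathcal{K}$, the monoid isomorphism~$\psi$ is therefore induced by conjugation by some bijection $\theta\colon A\to B$, that is, $\psi(f)=\theta\circ f\circ\theta^{-1}$ for every $f\in\Fn[1]{F}$.

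Finally I would feed this into Theorem~\ref{thm1:reconstruction}: the clone~$F$ has weakly directed unary part~$\Fn[1]{F}$, the map $\xi\defeq\phi$ is a surjective (indeed bijective) clone homomorphism, and its restriction to the monoid parts is exactly conjugation by~$\theta$. The theorem then gives that~$\phi$ itself is induced by conjugation by the very same~$\theta$, so $(\phi,\theta)$ is an action isomorphism. Since~$F'$ and~$\phi$ were arbitrary, $F$ has automatic action compatibility with respect to~$\mathcal{C}$, and automatic homeomorphicity follows at once, every isomorphism induced by conjugation being a (uniform) homeomorphism, as recorded after Definition~\ref{def:aut-homeo}. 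The only point requiring genuine care—rather than constituting a real obstacle—is the verification that the restriction of a clone isomorphism to the unary fragment really is a monoid isomorphism onto~$\Fn[1]{F'}$, and that the cardinality condition $\lvert B\rvert=\lvert A\rvert$ passes to the unary carriers so that automatic action compatibility of~$\Fn[1]{F}$ is legitimately applicable.
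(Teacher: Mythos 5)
Your proposal is correct and follows essentially the same route as the paper's own proof: restrict the clone isomorphism to the unary parts (a monoid isomorphism onto $\Fn[1]{F'}\in\mathcal{K}$, which lives on the same carrier~$B$ as~$F'$, so the cardinality hypothesis applies automatically), invoke automatic action compatibility of~$\Fn[1]{F}$ to obtain the bijection~$\theta$, and then apply Theorem~\ref{thm1:reconstruction} to lift conjugation by~$\theta$ to the whole clone. The only difference is expository detail: the paper compresses these steps into a few lines, while you spell out the routine verifications.
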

Note that this theorem only makes, albeit strong, assumptions on the
monoid part~$\Fn[1]{F}$ of the clone under consideration.
Furthermore, no closedness requirements are made. They may, however, be
necessary to provide the preconditions of the theorem for concrete
instances.
\begin{proof}
 Let $\xi\colon F\to F'$ be a clone isomorphism where $F'\in\mathcal{C}$
 is a clone on a set~$B$ of the same size as~$A$. By assumption,
 $\xi\restriction_{\Fn[1]{F}}^{\Fn[1]{F'}}\colon \Fn[1]{F}\to
 \Fn[1]{F'}$ is a monoid isomorphism onto $\Fn[1]{F'}\in\mathcal{K}$.
 Thus, it is given by conjugation by some bijection
 $\theta\colon A\to B$,
 which of course may depend on~$\xi$ and $\Fn[1]{F'}$.
 Applying Theorem~\ref{thm1:reconstruction}, $\xi$ is induced by
 conjugation by~$\theta$, as well, and~$\xi$ is a uniform homeomorphism.
\end{proof}

\section{Automatic action compatibility for concrete
structures}\label{sect:examples}
We begin with a convenience result summarizing a set of
assumptions that allows to combine all the previous results in a smooth
manner. Other ways to put Theorem~\ref{thm1:reconstruction} to work (with
different assumptions) are certainly conceivable.
\begin{corollary}\label{cor:combining-results}
Let~$\mathcal{K}$ be the class of all endomorphism monoids of countable
\nbdd{\aleph_0}categorical structures without algebraicity, and
let~$\mathbb{A}$ be such a structure.
Let~$M$ be a closed transformation monoid
on the carrier set of~$\mathbb{A}$, for instance, $M=\End(\mathbb{A})$
or $M=\Emb(\mathbb{A})$. If
\begin{enumerate}[\upshape (1)]
\item\label{item:forallexists}
  $\mathbb{A}$ has a weak \nbdd{\forall\exists}interpretation,
\item\label{item:density}
  $\Aut(\mathbb{A})$ is dense in~$M$ and coincides with the group of
  invertible elements $\set{g\in M \mid \exists f\in M\colon f\circ g = g\circ f
  = \id_A}$,
\item\label{item:weakly-directed}
  $M$ is weakly directed, e.g.\ transitive, and
\item\label{item:inj-mon-end}
  every injective monoid endomorphism of~$M$ that
  fixes $\Aut(\mathbb{A})$ pointwise is the identity,
  or\par
  $M$ has automatic homeomorphicity w.r.t.\ a
  class~$\mathcal{L}\supseteq\mathcal{K}$ of closed transformation
  monoids such that $\cl{G'}\in\mathcal{L}$ for the set~$G'$ of
  invertibles of any monoid~$M'\in\mathcal{K}$,
\end{enumerate}
then any closed clone~$F$ on the carrier set
of~$\mathbb{A}$ satisfying $\Fn[1]{F} = M$ has automatic action
compatibility \inbr{and thus automatic homeomorphicity} with respect to the
class~$\mathcal{C}$ of polymorphism clones of countable
\nbdd{\aleph_0}categorical structures without algebraicity.
Moreover, $M$ has automatic action compatibility with respect
to~$\mathcal{K}$.
\end{corollary}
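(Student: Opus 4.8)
The plan is to read this corollary as a bookkeeping result that threads its four hypotheses through Rubin's Theorem~\ref{thm:Rubin-group-categoricity}, the group\dash{}to\dash{}monoid transfer Corollary~\ref{cor:transfer-aut-act-comp}, and the monoid\dash{}to\dash{}clone transfer Theorem~\ref{thm2:reconstruction}; the genuine work is not the threading but the verification that the abstract class conditions of those results hold for the concrete $\mathcal{K}$ and~$\mathcal{C}$. First I would introduce the auxiliary class $\mathcal{G}=\lset{G(M')}{M'\in\mathcal{K}}$ and observe that, writing each $M'\in\mathcal{K}$ as $\End(\mathbb{B})$ for a countable \nbdd{\aleph_0}categorical~$\mathbb{B}$ without algebraicity, its group of invertibles is exactly $\Aut(\mathbb{B})$; hence $\mathcal{G}$ is precisely the class of automorphism groups of such structures. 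Setting $G\defeq\Aut(\mathbb{A})$, hypothesis~\eqref{item:density} identifies $G$ with the invertibles $G(M)$ of~$M$, and then hypothesis~\eqref{item:forallexists} lets me invoke Theorem~\ref{thm:Rubin-group-categoricity} in its Definition~\ref{def:aut-homeo} reformulation to conclude that $G$ has automatic action compatibility, and a fortiori automatic homeomorphicity, with respect to~$\mathcal{G}$.

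Next I would discharge the closure condition $\lset{\cl{H}}{H\in\mathcal{G}}\subs\mathcal{K}$ demanded by Corollary~\ref{cor:transfer-aut-act-comp} and Proposition~\ref{prop:char-aut-homeo-dense-invertibles}, as well as the membership $M\in\mathcal{K}$ that will be needed below. Since the structures are not assumed homogeneous, Lemma~\ref{lem:hom-structures}\eqref{item:emb-dense} is unavailable, so I expect this to be the main obstacle and would handle it by the canonical\dash{}structure construction together with Ryll-Nardzewski. Concretely, for an oligomorphic $H=\Aut(\mathbb{B})$ there are finitely many $H$\dash{}orbits on each $B^n$, and equipping $B$ with these orbits as relations yields a countable structure whose endomorphism monoid is $\cl{H}$ and whose automorphism group is still~$H$; being countable with oligomorphic automorphism group it is \nbdd{\aleph_0}categorical, and it has no algebraicity because $H$ does, so $\cl{H}\in\mathcal{K}$. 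The same reasoning, now applied to the (closed) monoid $M$ via its invariant relations---of which oligomorphicity of $G(M)=\Aut(\mathbb{A})$ forces only finitely many per arity---shows $M=\End(\mathbb{A}_M)$ for a countable \nbdd{\aleph_0}categorical structure $\mathbb{A}_M$ without algebraicity, whence $M\in\mathcal{K}$.

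From hypothesis~\eqref{item:inj-mon-end} I would then extract that $M$ has automatic homeomorphicity with respect to~$\mathcal{K}$. In its first alternative, $E_G=\set{\id_M}$ combined with $M\in\mathcal{K}$ (so that $\lset{\psi\fapply{M}}{\psi\in E_G}=\set{M}\subs\mathcal{K}$) and with the facts just assembled---$G$ dense in~$M$, $G$ having automatic homeomorphicity with respect to~$\mathcal{G}$, and $\lset{\cl{H}}{H\in\mathcal{G}}\subs\mathcal{K}$---lets me apply Remark~\ref{rem:equivalence-of-inj-mon-endo}, which equates $E_G=\set{\id_M}$ with automatic homeomorphicity of~$M$ with respect to~$\mathcal{K}$. (Note that Corollary~\ref{cor:automatic-homeo} is not directly usable here, since it presupposes unrestricted automatic homeomorphicity of~$G$, which Rubin's theorem does not provide.) In the second alternative, automatic homeomorphicity of~$M$ with respect to a class $\mathcal{L}\supseteq\mathcal{K}$ yields the same conclusion a fortiori.

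With $M$ closed, $G=G(M)$ dense and carrying automatic action compatibility with respect to~$\mathcal{G}$, $M$ enjoying automatic homeomorphicity with respect to~$\mathcal{K}$, and the closure condition in hand, Corollary~\ref{cor:transfer-aut-act-comp} applies and gives that $M$ has automatic action compatibility with respect to~$\mathcal{K}$, which is the `Moreover' assertion. Finally, since every $F'\in\mathcal{C}$ is $\Pol(\mathbb{B})$ with unary part $\Fn[1]{F'}=\End(\mathbb{B})\in\mathcal{K}$, the inclusion $\set{\Fn[1]{F'}\mid F'\in\mathcal{C}}\subs\mathcal{K}$ holds; and as $\Fn[1]{F}=M$ is weakly directed by hypothesis~\eqref{item:weakly-directed} and has just been shown to have automatic action compatibility with respect to~$\mathcal{K}$, Theorem~\ref{thm2:reconstruction} delivers automatic action compatibility, hence automatic homeomorphicity, of~$F$ with respect to~$\mathcal{C}$, completing the argument.
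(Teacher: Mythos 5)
Your proof is correct, but it follows a genuinely different route from the paper's. The paper never passes through the packaged transfer results you use: instead of first establishing automatic homeomorphicity and then automatic action compatibility of~$M$ w.r.t.~$\mathcal{K}$ via Remark~\ref{rem:equivalence-of-inj-mon-endo} and Corollary~\ref{cor:transfer-aut-act-comp}, it fixes an arbitrary monoid isomorphism $\xi\colon M\to M'$ with $M'\in\mathcal{K}$ and verifies the two hypotheses of Lemma~\ref{lem:conj-grp-to-mon} directly---condition~(i) by Lemma~\ref{lem:inj-mon-endo} in the first alternative of~\eqref{item:inj-mon-end}, and in the second alternative by the \emph{forward} implication \eqref{item:aut-homeo}$\Rightarrow$\eqref{item:grp-extension} of Proposition~\ref{prop:char-aut-homeo-dense-invertibles} applied w.r.t.~$\mathcal{L}$, which by Remark~\ref{rem:class-G-not-always-needed} needs no closure hypotheses on the class; condition~(ii) by Rubin's Theorem~\ref{thm:Rubin-group-categoricity}---and then concludes with Theorem~\ref{thm2:reconstruction}. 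This is exactly why the corollary demands $\cl{G'}\in\mathcal{L}$ in~\eqref{item:inj-mon-end}: the paper instantiates condition~\eqref{item:grp-extension} with the specific pair $M_1=\cl{G'}\subs M_2=M'$ inside~$\mathcal{L}$, and thereby sidesteps any verification that $\lset{\cl{H}}{H\in\mathcal{G}}\subs\mathcal{K}$ or that $M\in\mathcal{K}$. Your route, by contrast, is forced to verify precisely these class conditions, and you do so correctly with machinery absent from the paper's proof: the canonical structure carrying all orbit relations (so that $\End$ of it is~$\cl{H}$, its automorphism group contains~$H$, whence oligomorphicity and absence of algebraicity are inherited), Ryll-Nardzewski with the observation that oligomorphicity bounds the signature per arity, and the unary $\End$--$\mathrm{Inv}$ Galois connection for $M=\End(\mathbb{A}_M)$. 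What your heavier argument buys is a genuine by-product the paper does not record: for this concrete~$\mathcal{K}$ the closure of the invertibles of any $M'\in\mathcal{K}$ again lies in~$\mathcal{K}$, so the clause ``$\cl{G'}\in\mathcal{L}$'' in hypothesis~\eqref{item:inj-mon-end} is automatically satisfied once $\mathcal{L}\supseteq\mathcal{K}$, i.e.\ it is redundant for this instantiation (though not for the abstract Proposition~\ref{prop:char-aut-homeo-dense-invertibles}); you are also right that Corollary~\ref{cor:automatic-homeo} is unusable here because Rubin provides only automatic homeomorphicity of~$G$ restricted to~$\mathcal{G}$. The paper's proof is leaner and avoids all model-theoretic detours beyond Rubin's theorem; yours makes the applicability of the abstract transfer results to the concrete class self-contained.
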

\begin{proof}
Let us abbreviate $G =\Aut(\mathbb{A})$.
By assumption, the unary part of our clone, $\Fn[1]{F}=M$,
is weakly directed. Let~$\mathcal{K}$ and~$\mathcal{C}$ be as
described.
Clearly, $\set{\Fn[1]{F'}\mid F'\in\mathcal{C}}\subseteq \mathcal{K}$.
Moreover, let $\xi\colon M\to M'$ be any monoid
isomorphism onto any transformation monoid $M'\in\mathcal{K}$ on some
set equipotent to~$A$. As soon as we verify that~$\xi$ is induced by
conjugation, Theorem~\ref{thm2:reconstruction} will yield the desired
conclusion.
\par
To achieve this, we shall employ Lemma~\ref{lem:conj-grp-to-mon}.
Certainly, $M'$ is closed since
$M'\in\mathcal{K}$. Likewise, $M$ is a closed monoid with dense group of
invertibles~$G$. Furthermore, let $\psi\colon M\to \cl{G'}$ be a monoid
isomorphism onto the closure of the invertibles~$G'$ inside~$M'$ that
coincides with~$\xi$ on members of~$G$. If every injective monoid
endomorphism of~$M$ that fixes~$G$ pointwise is the identity, then
Lemma~\ref{lem:inj-mon-endo} states that~$\xi$ equals~$\psi$. Otherwise,
if~$M$ has automatic homeomorphicity w.r.t.~$\mathcal{L}$, then
Proposition~\ref{prop:char-aut-homeo-dense-invertibles}\eqref{item:aut-homeo}
is satisfied (cf.\ Remark~\ref{rem:class-G-not-always-needed}).
So Proposition~\ref{prop:char-aut-homeo-dense-invertibles}\eqref{item:grp-extension} yields that $\xi=\psi$ since the
closed monoids $\cl{G'}\subs M'$ belong to~$\mathcal{L}$.

Finally,
as~$\mathbb{A}$ is \nbdd{\aleph_0}categorical without algebraicity and
has a weak \nbdd{\forall\exists}interpretation,
Theorem~\ref{thm:Rubin-group-categoricity} yields that the restriction
$\xi\restriction_G^{G'}\colon G\to G'$ of~$\xi$ to~$G$, is
induced by conjugation. This follows for,
by the construction of~$\mathcal{K}$, $G'$ is an automorphism group of a
countable \nbdd{\aleph_0}categorical structure without
algebraicity and thus covered by
Theorem~\ref{thm:Rubin-group-categoricity}.
\end{proof}

It is interesting to observe that the main structural restrictions
(countable categoricity, absence of algebraicity) for the preceding
result come from the group case, that is,
Theorem~\ref{thm:Rubin-group-categoricity}. This means a stronger result
regarding the automorphism groups would allow for a wider ranging
reconstruction result regarding the clones.
Next we state a few less technical assumptions, allowing us to use
Corollary~\ref{cor:combining-results}.
\begin{corollary}\label{cor:easier-assumptions}
Let $\mathbb{A}$ be a countable \nbdd{\aleph_0}categorical homogeneous
relational structure without algebraicity and $M=\Emb(\mathbb{A})$.
If $\Aut(\mathbb{A})$ is transitive and supports a weak\/
\nbdd{\forall\exists}interpretation, then
Corollary~\ref{cor:combining-results} applies to~$\mathbb{A}$ and\/
$\Emb(\mathbb{A})$, which in this case coincides with the monoid of elementary
self\dash{}embeddings of\/~$\mathbb{A}$.
\end{corollary}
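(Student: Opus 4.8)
The plan is to verify, one at a time, the four numbered hypotheses of Corollary~\ref{cor:combining-results} for $M=\Emb(\mathbb{A})$, recording along the way the two structural facts that make this monoid fit the framework: that it is closed and that it is an endomorphism monoid of an $\aleph_0$-categorical structure without algebraicity. Writing $G\defeq\Aut(\mathbb{A})$, homogeneity and Lemma~\ref{lem:hom-structures}\eqref{item:emb-dense} give $\cl{G}=\Emb(\mathbb{A})$, so $M$ is closed, $G$ is dense in it, and the invertibles of $M$ are exactly the automorphisms; this is hypothesis~\eqref{item:density}. Hypothesis~\eqref{item:forallexists} is the assumed weak \nbdd{\forall\exists}interpretation, and hypothesis~\eqref{item:weakly-directed} holds because $G\subs M$ is transitive, whence $M$ is transitive and therefore weakly directed. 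To place $M$ inside the parameter class~$\mathcal{K}$ I would invoke Lemma~\ref{lem:hom-structures}\eqref{item:emb-cmplmt}, which identifies $\Emb(\mathbb{A})=\End(\cpl{A})$; since $\cpl{A}$ has the same automorphism group as~$\mathbb{A}$, it is countable, \nbdd{\aleph_0}categorical and without algebraicity, so $M\in\mathcal{K}$ and the groups of invertibles arising in $\mathcal{K}$ are automorphism groups of such structures, which is exactly what makes Rubin's theorem applicable at the end.

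For the side claim that $\Emb(\mathbb{A})$ consists precisely of the elementary self\dash{}embeddings I would argue as follows. By homogeneity two finite tuples lie in the same $G$-orbit iff the coordinate map between them is an isomorphism of induced substructures, i.e.\ iff they satisfy the same atomic formulae; by \nbdd{\aleph_0}categoricity (Ryll-Nardzewski) each orbit is \nbdd{\emptyset}definable, so orbit equality coincides with type equality. An embedding preserves and reflects atomic formulae, hence sends every tuple into its own $G$-orbit, hence preserves all complete types, which is elementarity; the converse is immediate.

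The crux is hypothesis~\eqref{item:inj-mon-end}, and this is where I expect the main obstacle to lie. The group-side input is free: by Theorem~\ref{thm:Rubin-group-categoricity} the hypotheses on~$\mathbb{A}$ already give that $G$ has automatic action compatibility, hence automatic homeomorphicity, with respect to the relevant class of automorphism groups. What genuinely remains is the monoid condition that the only injective monoid endomorphism $\psi\colon M\to M$ fixing $G$ pointwise is the identity. I would first use that any such $\psi$ is $G$\dash{}biequivariant, $\psi(g\circ f\circ h)=g\circ\psi(f)\circ h$ for $g,h\in G$, and then reduce the whole statement to a single point: for $f\in M$ and $a_0\in A$, choosing $g_0\in G$ with $g_0(a_0)=f(a_0)$ (transitivity) and factoring $f=g_0\circ e$ with $e\defeq g_0^{-1}\circ f$ fixing~$a_0$, one gets $\psi(f)(a_0)=g_0\bigl(\psi(e)(a_0)\bigr)$, so it suffices to show $\psi$ maps the point\dash{}stabiliser submonoid $\lset{e\in M}{e(a_0)=a_0}$ into itself; and by biequivariance plus transitivity a single $a_0$ suffices.

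The difficulty is that this stabiliser submonoid equals the closure $\cl{G_{a_0}}$ of the point stabiliser, so its natural description is \emph{topological}, whereas $\psi$ is assumed only to respect the \emph{algebraic} (monoid) structure: were $\psi$ known continuous, Proposition~\ref{prop:extending-dense-groups} would force $\psi=\id_M$ at once, by uniqueness of the continuous extension of $\id_G$. My plan to bridge this gap is to rigidify using the absence of algebraicity: the algebraically visible relations $g\circ f=f$ and $f\circ g=f$ translate, via injectivity of $\psi$ and $\psi\restriction_G=\id$, into the statement that $f\fapply{A}$ and $\psi(f)\fapply{A}$ have the same pointwise stabiliser, and finer relations of this kind, read off on finite sets and pinned down by trivial algebraic closure, should force $\psi(f)$ and $f$ to agree pointwise. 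This last step is exactly the technical condition verified case by case in the earlier literature (cf.\ the extra work needed for $\End(\Q,\leq)$ mentioned in Remark~\ref{rem:equivalence-of-inj-mon-endo}), and it is the part I expect to demand the most care. Once~\eqref{item:inj-mon-end} is secured, Corollary~\ref{cor:combining-results} applies verbatim.
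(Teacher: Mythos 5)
Your verification of conditions~\eqref{item:density} and~\eqref{item:weakly-directed} of Corollary~\ref{cor:combining-results} is correct and essentially the paper's (Lemma~\ref{lem:hom-structures}\eqref{item:emb-dense} for density, closedness and the invertibles; transitivity of~$M$ inherited from $\Aut(\mathbb{A})$), and your Ryll-Nardzewski argument that embeddings preserve orbits, hence complete types, is a legitimate alternative route to the side claim about elementary self\dash{}embeddings (the paper instead notes that a countable \nbdd{\aleph_0}categorical structure is saturated, that $\Aut(\mathbb{A})$ is then dense in the elementary self\dash{}embedding monoid, and that by homogeneity $\cl{\Aut(\mathbb{A})}=\Emb(\mathbb{A})$). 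The genuine gap is condition~\eqref{item:inj-mon-end}, which you leave at the level of a plan and which is the actual content of the corollary. Your rigidification idea does not go through as sketched: from injectivity of~$\psi$ and $\psi\restriction_G=\id_M\restriction_G$ one does get, for $g\in G$, the equivalence $g\circ f=f \iff g\circ\psi(f)=\psi(f)$, i.e.\ the pointwise stabilisers of $f\fapply{A}$ and $\psi(f)\fapply{A}$ in~$G$ coincide; but this is vacuous whenever that stabiliser is trivial, which already happens for self\dash{}embeddings of $(\Q,<)$ whose image is dense and co\dash{}dense (and relations of the form $f\circ g=f$ force $g=\id_A$ by injectivity of~$f$, so they carry no information either). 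Absence of algebraicity does not by itself repair this, and `finer relations of this kind \dots\ should force' is a hope, not an argument.

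What the paper actually does at this point is a single citation rather than the structure\dash{}specific, case\dash{}by\dash{}case work you anticipate: since~$\mathbb{A}$ is countable and saturated, since $M=\Emb(\mathbb{A})$ coincides with the monoid of elementary self\dash{}embeddings, and since absence of algebraicity gives $\Aut(\mathbb{A})$ trivial centre by Lemma~\ref{lem:hom-structures}\eqref{item:trivial-centre} (an item of the lemma you never invoke), Proposition~2.5 of~\cite[p.~600]{PechReconstructingTopologyOfEEmbCntblSaturatedStr} yields exactly condition~\eqref{item:inj-mon-end}. So the identification of $\Emb(\mathbb{A})$ with the elementary self\dash{}embedding monoid, which you prove but treat as a side remark, is in fact the load\dash{}bearing step: it is precisely what makes that general result of Pech and Pech applicable in one stroke. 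Without it (or some substitute proof of~\eqref{item:inj-mon-end}) your proposal is incomplete at its self\dash{}identified crux.
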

We note that our argument for
condition~\eqref{item:inj-mon-end} is somewhat similar to the
strategy employed in the proof of~\cite[Theorem~4.7,
p.~21]{PechPolymorphismClonesHomStr}.
\begin{proof}
Since~$\mathbb{A}$ is countable and \nbdd{\aleph_0}categorical, it is
saturated (see~\cite[Example~5, p.~485]{HodgesModelTheory}).
For this reason, the automorphism group is dense in the monoid of
elementary self\dash{}embeddings
(cf.~\cite[p.~598]{PechReconstructingTopologyOfEEmbCntblSaturatedStr})
and, by homogeneity, its closure coincides with~$M=\Emb(\mathbb{A})$,
whose invertibles are exactly~$\Aut(\mathbb{A})$
(see Lemma~\ref{lem:hom-structures}\eqref{item:emb-dense}\eqref{item:emb-cmplmt}).
Transitivity of~$\Aut(\mathbb{A})$ is inherited by~$M$.
For~$\mathbb{A}$ avoids any algebraicity,
Lemma~\ref{lem:hom-structures}\eqref{item:trivial-centre} gives that
$\Aut(\mathbb{A})$ has a trivial centre. As~$\mathbb{A}$ is countable
and saturated, and~$M$ equals the monoid of elementary
self\dash{}embeddings, Proposition~2.5
of~\cite[p.~600]{PechReconstructingTopologyOfEEmbCntblSaturatedStr}
provides the monoid condition~\eqref{item:inj-mon-end} of
Corollary~\ref{cor:combining-results} for~$M$.
\end{proof}

Subsequently, we consider a list of example structures.
With two exceptions they satisfy all assumptions of
Corollary~\ref{cor:combining-results}; for the two particular
cases only one condition will remain open.
\par
As our first examples, we consider the non\dash{}trivial
first\dash{}order reducts of the rationals~$\apply{\Q,<}$ studied
in~\cite{CameronTransitivityOfPermutationGroupsOnUnorderedSets}.
Each of them is given by a single relation:
$(\Q, \betw)$, $(\Q, \crc)$, and $(\Q, \sep)$, where for any elements
$x,y,z,t\in\Q$ we have
\begin{align*}
\betw(x, y, z)& \iff x < y < z \lor z < y < x,\\
\crc(x, y, z)& \iff x < y < z \lor y < z < x \lor z < x < y,\\
\sep(x, y, z, t)& \iff \apply{\crc(x, y, z) \land \crc(x, t, y)}\lor
                       \apply{\crc(x, z, y) \land \crc(x, y, t)}.
\end{align*}
These reducts featured prominently in the complexity classification of
temporal constraint
languages~\cite{BodirskyKaraComplexityOfTemporalCSP}.
Moreover, a selection of further notorious candidates from the zoo of
countable universal homogeneous structures will play a role.

\begin{lemma}\label{lem:str-partially-satisfying-assumptions}
Let $\mathbb{A}$ be one of the following structures:
\begin{enumerate}[\upshape (a)]
\item a reduct~$(\Q,\rho)$ of the strictly ordered rationals~$(\Q,<)$,
      where $\rho$ is one of the relations in
      $\set{\mathord{<},\betw,\crc,\sep}$;
\item the countable universal homogeneous poset~$\mathbb{P}$ \inbr{under the strict order};
\item the Rado \inbr{also random} graph~$\mathbb{G}$;
\item the random directed graph~$\mathbb{D}$;
\item the universal homogeneous version of the random bipartite
      graph~$\mathbb{B}$
      \inbr{with an additional relation for the bipartition,
      cf.~\cite[p.~1603]{MacphersonHomogeneousStructures}};
      %- not homogeneous
      %  (\cite[p.~1603]{MacphersonHomogeneousStructures}), but
      %  can be made homogeneous
      %- simple (\cite[Examples(3), p.~235]{Rubin1994}) hence
      %  homogeneous and without algebraicity,
      %  moreover has weak ∀∃-interpretations
\item the countable universal homogeneous tournament~$\mathbb{T}$;
\item the countable dense local order~$\mathbb{S}_2$,
      see~\cite{CameronOrbitsOfPermutationGroupsOnUnorderedSetsII}
      or e.g.~\cite[p.~1604~(ii)]{MacphersonHomogeneousStructures};
      % also ‘countable universal homogeneous local order’,
      % so it is a Fraissé class, see e.g. Thm. 1.4, p. 78 of
      % David M. Evans, Finite covers of ℵ₀-categorical structures, in
      % Advances in Algebra and Model Theory, Droste, Göbel (eds.)
\item Cherlin's myopic local order~$\mathbb{S}_3$,
      see~\cite{CherlinHomogeneousDirectedGraphsTheImprimitiveCase}
      or~\cite[Example 2.3.1~2., p.~1605]{MacphersonHomogeneousStructures};
      % G.L. Cherlin, Homogeneous directed graphs: the imprimitive case.
      % Logic Colloquium 1985 (eds The Paris Logic Group, North Holland,
      % Amsterdam, 1987), pp.~67—88
      % has a weak ∀∃-interpretation by~\cite[p.~243]{Rubin1994}
\item the countable universal homogeneous \nbdd{k}uniform
      hypergraph~$\mathbb{H}_k$ for~$k\geq 2$;
\item the countable universal homogeneous \nbdd{\mathbb{K}_n}free
      graph~$\mathbb{G}_{-\mathbb{K}_n}$ for $n\geq 3$;
\item any of the countably universal homogeneous Henson
      digraphs~$\mathbb{D}_X$, forbidding a certain set~$X$ of finite
      tournaments, see
      e.g.~\cite[p.~1604~(iii)]{MacphersonHomogeneousStructures}.
\end{enumerate}
Then~$\mathbb{A}$ satisfies all
assumptions of~Corollary~\ref{cor:combining-results} other
than~\eqref{item:forallexists} with respect to
the closed monoid~$M=\Emb(\mathbb{A})$, which coincides with the monoid
of elementary self\dash{}embeddings of~$\mathbb{A}$.
\par
If $\mathbb{A}\notin\set{(\Q,\betw), (\Q,\crc), (\Q,\sep)}$, then
Corollary~\ref{cor:combining-results} is indeed applicable and yields
that~$M$ and
every closed clone~$F$ on the carrier of~$\mathbb{A}$ having~$M$ as its
unary part, both have automatic action compatibility with respect to
countable \nbdd{\aleph_0}categorical structures without algebraicity.
In particular, this holds for the clone $F=\Pol(\cpl{A})$; in the
case of the rationals also for $F=\Pol(\mathbb{A})=\Pol(\Q,<)$.
\end{lemma}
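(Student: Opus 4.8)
The plan is to route the entire verification through Corollary~\ref{cor:easier-assumptions} and Corollary~\ref{cor:combining-results}, exploiting that the hypotheses of the latter split into a purely structural layer---countability, \nbdd{\aleph_0}categoricity, homogeneity, absence of algebraicity and transitivity of the automorphism group---which already secures conditions \eqref{item:density}--\eqref{item:inj-mon-end}, and the weak \nbdd{\forall\exists}interpretation~\eqref{item:forallexists}, which is the only genuinely example-specific ingredient. So first I would discharge the structural layer uniformly. Every structure in the list is countable, homogeneous and given in a finite purely relational signature, hence \nbdd{\aleph_0}categorical by Lemma~\ref{lem:hom-structures}\eqref{item:cntbl-cat}. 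Transitivity of $\Aut(\mathbb{A})$ follows in each case from Lemma~\ref{lem:hom-structures}\eqref{item:loopless}: the relations $\mathord{<}$, $\betw$, $\crc$, $\sep$ and the edge and arc relations of the graphs, digraphs, tournaments, hypergraphs, local orders and Henson structures are all irreflexive and hence meet the diagonal emptily, while the bipartition relation of~$\mathbb{B}$ either contains the full diagonal or misses it entirely; in either reading the hypothesis of Lemma~\ref{lem:hom-structures}\eqref{item:loopless} holds. Absence of algebraicity is classical for these universal homogeneous structures (see, e.g., \cite{MacphersonHomogeneousStructures}).

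With these facts in hand I would invoke the argument of Corollary~\ref{cor:easier-assumptions}, the point being that its proof derives conditions~\eqref{item:density}--\eqref{item:inj-mon-end} of Corollary~\ref{cor:combining-results}---as well as the identification of $M=\Emb(\mathbb{A})$ with the monoid of elementary self\dash{}embeddings---using only countability together with \nbdd{\aleph_0}categoricity (whence saturation), homogeneity, transitivity and absence of algebraicity, and never appealing to a weak \nbdd{\forall\exists}interpretation. Concretely, saturation gives density of $\Aut(\mathbb{A})$ in the monoid of elementary self\dash{}embeddings, homogeneity identifies its closure with $\Emb(\mathbb{A})$ whose invertibles are exactly $\Aut(\mathbb{A})$ (condition~\eqref{item:density}, via Lemma~\ref{lem:hom-structures}\eqref{item:emb-dense}), transitivity is inherited by $M$ (condition~\eqref{item:weakly-directed}), and Lemma~\ref{lem:hom-structures}\eqref{item:trivial-centre} together with Proposition~2.5 of~\cite{PechReconstructingTopologyOfEEmbCntblSaturatedStr} yields condition~\eqref{item:inj-mon-end}. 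Since this portion of the proof applies verbatim, the first assertion of the lemma follows for every listed structure, the three excluded reducts included.

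For the second assertion I must additionally supply condition~\eqref{item:forallexists}, a weak \nbdd{\forall\exists}interpretation, for every structure other than $(\Q,\betw)$, $(\Q,\crc)$ and $(\Q,\sep)$; I expect this to be the main obstacle, as it is the one input that is neither formal nor uniform and has to be imported from the reconstruction literature case by case---$(\Q,<)$ and the generic poset from Rubin~\cite{Rubin1994,Rubin2016}, and the random graph, random digraph, random tournament, random bipartite graph, the local orders~$\mathbb{S}_2$ and~$\mathbb{S}_3$, the \nbdd{k}uniform hypergraphs, the \nbdd{\mathbb{K}_n}free graphs and the Henson digraphs from the references assembled in~\cite{MacphersonHomogeneousStructures}. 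Once \eqref{item:forallexists} is established for such an~$\mathbb{A}$, Corollary~\ref{cor:combining-results} applies in full and delivers automatic action compatibility---hence automatic homeomorphicity---of $M$ with respect to~$\mathcal{K}$ and of every closed clone with unary part~$M$ with respect to~$\mathcal{C}$. The stated specializations then drop out: by Lemma~\ref{lem:hom-structures}\eqref{item:emb-cmplmt} one has $\End(\cpl{A})=\Emb(\mathbb{A})=M$, so $F=\Pol(\cpl{A})$ is a closed clone with $\Fn[1]{F}=M$; and for the rationals $\Pol(\Q,<)$ is closed with unary part $\End(\Q,<)=\Emb(\Q,<)=M$, so both clones are covered.
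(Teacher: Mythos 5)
Your proposal is correct and takes essentially the same route as the paper's own proof: it discharges the structural layer uniformly via Lemma~\ref{lem:hom-structures} (parts \eqref{item:loopless}, \eqref{item:cntbl-cat}, \eqref{item:emb-dense}, \eqref{item:trivial-centre}) and Corollary~\ref{cor:easier-assumptions}, then imports the weak \nbdd{\forall\exists}interpretations case by case from Rubin and the surrounding literature, and obtains the specializations $F=\Pol(\cpl{A})$ and $F=\Pol(\Q,<)$ exactly as the paper does via Lemma~\ref{lem:hom-structures}\eqref{item:emb-cmplmt} and $\Emb(\Q,<)=\End(\Q,<)=\Pol^{(1)}(\Q,<)$. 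The one spot where the paper is more explicit than your blanket citation is the absence of algebraicity for the reducts $(\Q,\rho)$, which it derives from $\Aut(\Q,<)\subs\Aut(\Q,\rho)$ (reducts have larger groups, hence larger orbits under stabilizers) rather than treating it as classical---a detail you should supply, though it is immediate.
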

Note that in the case of the reducts $(\Q,\rho)$ one can also use the
endomorphism monoid, because $\End(\Q,\rho) = \Emb(\Q,\rho)$. The proof of
the latter fact is completely elementary, but (at least in the case
of~$\sep$) a long and tedious case distinction. So it is perfectly suited
to be left to an automated theorem prover.
% It follows from the fact mentioned in the introduction
% of~\cite{TrussVargasReconstructingTopologyMonoidsPolClonesOfRationals} that
% Emb(strct-redct)⊆ End(strct-redct) = Emb(refl-redct) ⊆Emb(strct-redct).
% Alternatively, one can verify that preserving ρ implies injectivity and
% by a tedious case distinction that preserving ρ implies reflection of ρ.
% This has been done in an automated fashion by the Z3-script
%                  preservation_implies_reflection_for_strict_reducts.z3
%
%\input{proof_end_emb_sep}%
\begin{proof}
Most of these structures~$\mathbb{A}$ are, by definition, limits of
Fra\"{\i}ss\'e classes, so they are countably infinite universal
homogeneous structures. The reducts $(\Q,\rho)$ and~$\mathbb{S}_2$,
$\mathbb{S}_3$
were not defined in this way, but nonetheless are homogeneous, see
e.g.~\cite[Example~2.3.1, p.~1605]{MacphersonHomogeneousStructures}.
Since~$\mathbb{A}$ has a finite relational signature, it is
\nbdd{\aleph_0}categorical
(Lemma~\ref{lem:hom-structures}\eqref{item:cntbl-cat}).
By part~\eqref{item:loopless} of the same lemma, $\Aut(\mathbb{A})$
is transitive because all given structures are `loopless' in
the sense that the intersection of each of their fundamental relations
with the appropriate~$\Delta^{(m)}_A$ is empty.
Moreover, all the listed structures have no algebraicity. To give some literature
references, for $\mathbb{H}_k$ this information can be obtained from the
proof of Corollary~22 of~\cite[p.~3726]{Bodirsky}. Moreover,
every structure listed by Rubin in~\cite{Rubin1994} as examples
for his Theorem~2.2 has this property, see p.~234 et seq.\ for
$\mathbb{D}_X$, $\mathbb{D}=\mathbb{D}_{\emptyset}$,
$\mathbb{G}_{-\mathbb{K}_n}$, $\mathbb{G}$, $\mathbb{B}$, $\mathbb{T}$
and p.~243 for~$\mathbb{P}$, $\mathbb{S}_2$, $\mathbb{S}_3$ and~$(\Q,<)$.
For $(\Q,<)$ has no algebraicity, the same is true for any of its
reducts as $\Aut(\Q,<) \subs \Aut(\Q,\rho)$ wherefore the reducts have
even bigger orbits than those given by stabilizers of $\Aut(\Q,<)$.
\par
Except for \nbdd{\forall\exists}interpretations we are now ready to
invoke Corollary~\ref{cor:easier-assumptions} to have
Corollary~\ref{cor:combining-results} deliver the conclusion. For this we assume
that $\mathbb{A}\neq (\Q,\rho)$ for any $\rho\in\set{\betw,\crc,\sep}$.
Concerning the remaining condition, Rubin proves
in~\cite[Theorem~3.2, p.~235]{Rubin1994} that every `simple' structure
has a weak \nbdd{\forall\exists}interpretation. The list of `simple'
structures given in~\cite[Examples(1)--(3), p.~234 et seq.]{Rubin1994}
covers~$\mathbb{G}$, $\mathbb{D}$, $\mathbb{B}$, $\mathbb{T}$,
$\mathbb{D}_X$ and $\mathbb{G}_{-\mathbb{K}_n}$. The sporadic
\nbdd{\forall\exists}interpretations given in~\cite[p.~243]{Rubin1994}
cover~$(\Q,<)$, $\mathbb{P}$, and $\mathbb{S}_2$ and $\mathbb{S}_3$.
Finally, $\mathbb{H}_k$ is treated
in~\cite{BarbinaMacphersonReconstructionHomRelStr2007}, see section~1 and
the discussion following~Theorem~4.1 ibid.
\par
Note that for $\mathbb{A} = (\Q,<)$, we have
$\Emb(\Q,\mathord{<})=\End(\Q,<) = \Pol^{(1)}(\Q,<)$, so that one can
avoid forming the structure $\cpl{A}$.
\end{proof}

\begin{remark}
For each of~$\mathbb{G}$, $\mathbb{D}$, $\mathbb{T}$ and~$\mathbb{H}_k$
\inbr{$k\geq 2$}, the monoid condition~\eqref{item:inj-mon-end} of
Corollary~\ref{cor:combining-results} also is a
consequence of~\cite[Lemma~20, p.~3726]{Bodirsky}, which is applicable
since these structures have the joint extension property, cf.\ the proof
of Corollary~22 \inbr{p.~3726} and the discussion after Definition~18
\inbr{p.~3724} in~\cite{Bodirsky}.
\par
Regarding the reducts~$(\Q,\rho)$,
for $\rho = \mathord{<}$, assumption~\eqref{item:inj-mon-end} was more
explicitly verified in~\cite[Corollary~2.5]{BehReconstructingTopologyOnRationals}.
For $\rho = \betw$, this condition is shown in the proof of
Theorem~2.3 of~\cite{TrussVargasReconstructingTopologyMonoidsPolClonesOfRationals}, for $\rho=\crc$,
it is stated in Corollary~3.5 of~\cite{TrussVargasReconstructingTopologyMonoidsPolClonesOfRationals}, and finally, for
$\rho=\sep$, this fact is discussed at the beginning of section~4
of~\cite{TrussVargasReconstructingTopologyMonoidsPolClonesOfRationals}, before~Theorem~4.1.
\end{remark}

It can be seen that for the reducts of $(\Q,<)$ the first condition
of Corollary~\ref{cor:combining-results} has been left open so far.
Christian Pech kindly pointed out to us that Silvia Barbina gave a weak
\nbdd{\forall\exists}interpretation for~$(\Q,\betw)$ in her PhD-thesis,
see~\cite[Example~1.5.3, p.~38]{BarbinaThesis}.
\begin{corollary}\label{cor:Q-betw}
Corollary~\ref{cor:combining-results} is applicable to
$\mathbb{A}=(\Q,\betw)$ and $M=\End(\mathbb{A})$. Hence, $M$ and any
closed clone~$F\subs\OA{\Q}$ with $\Fn[1]{F}=M$, e.g.,
$F=\Pol(\Q,\betw)$, has automatic action compatibility with respect to
countable \nbdd{\aleph_0}categorical structures without algebraicity.
\end{corollary}
\begin{proof}
Combine Lemma~\ref{lem:str-partially-satisfying-assumptions}
with~\cite[Example~1.5.3]{BarbinaThesis}, and recall that in this case
$\Emb(\mathbb{A}) = M = \Pol^{(1)}(\mathbb{A})$.
\end{proof}

Barbina's construction uses
that $\Aut(\Q,<)$ is a closed normal oligomorphic transitive subgroup
of~$\Aut(\Q,\betw)$ and that this subgroup is existentially definable
in~$\Aut(\Q,\betw)$ to transfer Rubin's
\nbdd{\forall\exists}interpretation for~$(\Q,<)$,
see~\cite[p.~243]{Rubin1994}, to~$(\Q,\betw)$. The detailed
requirements on the interpretation for when such a transfer is possible
are stated in~\cite[Proposition~1.5.1, p.~36]{BarbinaThesis}.
Subsequently we are going to show that such a transfer is also possible
between $(\Q,\crc)$ and~$(\Q,\sep)$. However, we currently are not
aware of \nbdd{\forall\exists}interpretations for~$(\Q,\crc)$, even
though~\cite[Proposition~1.2.9, p.~23]{BarbinaThesis} might possibly
provide a route to them. Let us record this problem explicitly:
\par
\begin{problem}\label{prob:reducts-forallexists}
Which of the reducts~$(\Q, \crc)$ and~$(\Q,\sep)$
have weak \nbdd{\forall\exists}interpretations? Does $(\Q,\crc)$ have a
weak \nbdd{\forall\exists}interpretation satisfying the additional
assumptions 1.--4.\ mentioned
in~\cite[Proposition~1.5.1]{BarbinaThesis}?
\end{problem}

For Barbina's transfer result the existential definability of the
smaller group in the bigger one can be particularly tricky. For this we
present the following lemma,
generalizing~\cite[Lemma~1.5.2, p.~37]{BarbinaThesis}.

\begin{lemma}\label{lem:local-generics}
Let~$G$ be a Polish group and $H\nsg G$ a closed normal subgroup that
has the following sort of locally generic
\inbr{cf.~\cite[p.~122]{TrussGenericAutomorphisms}}
elements $h\in H$: there is a non\dash{}empty open subset $X\subs H$
such that every $k\in H$ is a product of two members of~$X$ and there
is $h\in X$ \inbr{said to be locally generic}, the \nbdd{H}conjugacy class
$C\defeq C_H = \lset{ghg^{-1}}{g\in H}$
of which is comeagre in~$X$, that is, $X\cap C$ is a comeagre subset
of~$X$. Under these conditions, $H$ is existentially definable
\inbr{with parameter~$h$} in~$G$.
\end{lemma}
Note that $X\cap C\subs X$ being comeagre means that $X\setminus
C=X\setminus(X\cap C)$ is a meagre subset of~$X$. Here the notion of
being meagre does not differ whether it is understood with respect to
the topology of~$H$ or the topology of the subspace~$X$, since~$X$ is
open in~$H$. Moreover, $H$ is again Polish since it is closed in~$G$.
In the proof we shall use the following observation, which is readily
verified.
\begin{fact}\label{fact:product}
If~$G$ is a group, $g\in G$ and $S\subs G$ any subset, then
$g\in S\cdot S$ \inbr{i.e., $g$ is a product of two possibly equal
elements from~$S$} if and only if $S\cap gS^{-1}\neq \emptyset$.
\end{fact}
\begin{proof}[Proof of Lemma~\ref{lem:local-generics}]
Let $h\in X$, $X\subs H$ and~$C$ be as described above. We begin by
showing that $H\subs C\cdot C$ and thus consider an arbitrary element
$k\in H$. We work in the Polish group~$H$; since it is a topological
group, taking inverses or left or right multiplications by some element
of~$H$ are homeomorphisms of~$H$. We know that $X\setminus
C=X\setminus(X\cap C)$ is meagre in~$H$, so
$X^{-1}\setminus C^{-1}$, which can be written as $X^{-1}\setminus
\apply{C^{-1}\cap X^{-1}}= X^{-1}\setminus(C\cap X)^{-1}$ is meagre
in~$H$, too, and likewise
$kX^{-1}\setminus kC^{-1}=kX^{-1}\setminus\apply{kC^{-1}\cap kX^{-1}}
=kX^{-1}\setminus k(C\cap X)^{-1}$.
Subsets of meagre sets are again meagre, so
$\apply{X\cap kX^{-1}}\setminus C\subs X\setminus C$ and
$\apply{X\cap kX^{-1}}\setminus kC^{-1} \subs kX^{-1}\setminus kC^{-1}$
are both meagre, and so is their union
$\apply{X\cap kX^{-1}}\setminus \apply{C\cap kC^{-1}}$.
Exploiting the homeomorphisms, $X^{-1}$ and $kX^{-1}$ are open, and so
$X\cap kX^{-1}\subs H$ is open. By the assumption on~$X$ and
Fact~\ref{fact:product}, $X\cap kX^{-1}\neq \emptyset$; since~$H$ is
Polish, this open and non\dash{}empty set must be non\dash{}meagre.
If $C\cap kC^{-1}$ were empty, then
$X\cap kX^{-1}=\apply{X\cap kX^{-1}}\setminus\apply{C\cap kC^{-1}}$
would be meagre, thus we conclude that $C\cap kC^{-1}\neq \emptyset$.
By Fact~\ref{fact:product}, $k\in C\cdot C$.
\par
From $H\subs C\cdot C$ we continue as
in the proof of~\cite[Lemma~1.5.2]{BarbinaThesis}.
We define $C_G\defeq\lset{ghg^{-1}}{g\in G}\supseteq C$.
Since $H\nsg G$, we have $C_G\subs H$, so
$H\subs C\cdot C\subs C_G \cdot C_G \subs H\cdot H\subs H$, leading to
\begin{align*}
H = C_G\cdot C_G =
\lset{x\in G}{\exists g_1,g_2\in G\colon x=g_1hg_1^{-1}\cdot
g_2hg_2^{-1}},
\end{align*}
which is the evaluation of an existential \nbdd{G}formula with
parameter~$h$.
\end{proof}

While Barbina employed generic automorphisms to obtain a weak
\nbdd{\forall\exists}interpretation for $(\Q,\betw)$, we can now use
locally generic automorphisms.
\begin{lemma}\label{lem:Q-circ}
$H\defeq \Aut(\Q,\crc)$ is a closed oligomorphic normal subgroup of
index~$2$ of~$G\defeq \Aut(\Q,\sep)$, which acts transitively on~$\Q$
and is existentially definable with a single parameter in~$G$.
\end{lemma}
\begin{proof}
By the definition of~$\sep$ from~$\crc$, $H\subs G$ is a subgroup; it
is closed and oligomorphic since it is the automorphism group of a
countable \nbdd{\aleph_0}categorical structure. $H$ acts transitively
on~$\Q$ by Lemma~\ref{lem:hom-structures}\eqref{item:loopless}, and it
is normal since it has index~$2$ in~$G$.
The latter holds since~$G$ contains bijections preserving the circular
order, i.e., members of~$H$, and bijections~$f$ reversing the circular
order (in the sense that any $(x,y,z)\in\crc$ is mapped to
$(f(x),f(z),f(y))\in\crc$),
very much analogously like $\Aut(\Q,\betw)$ consists of order
preserving and order reversing bijections with respect to $(\Q,<)$.
Using orbit-stabilizer techniques and the observation that the
\nbdd{G}stabilizer of a single point~$a$ of~$\Q$ is $\Aut(\Q,\betw)$
with respect to a suitably defined betweenness relation on
$\Q\setminus\set{a}$ (some more details on this can be found
in~\cite[section~11.3.4, p.~110 et seq.]{BhattacharjeeMacphersonMoellerNeumannNotesOnInfinitePermutationGroups}), one can show
that~$G$ actually consists of none other than two disjoint cosets
of~$H$, namely~$H$ and $fH$ where~$f$ is some bijection reversing the
circular order, e.g., given by $f(x­)=-x$ for $x\in\Q$.
\par
To get that~$H$ is existentially definable in~$G$, we make use of
Lemma~\ref{lem:local-generics}. It is well known that the full symmetric
group on a countable set, such as~$\Q$, is a Polish group (see, e.g.,
\cite[section~2.6, p.~97]{MellerayPolishGroups}). Hence, any
closed subgroup of it, that is, any automorphism group of a countable
structure, e.g., $H$, is Polish, too. Now, according to Example~5.6
of~\cite[p.~134]{TrussGenericAutomorphisms}, there is some
\nbdd{\crc}preserving permutation
$h\in X = \lset{k\in H}{\exists q\in \Q\colon k(q)=q}$
which is a locally generic automorphism on the open subset~$X\subs H$.
Using again orbit-stabilizer methods to get a more explicit description
of the functions in~$H$, it can also be verified that every $k\in H$ is
a product of two elements from~$X$, i.e., of two automorphisms in~$H$
each having some fixed point.
\end{proof}

\begin{corollary}\label{cor:Q-circ-Q-sep}
If $(\Q,\crc)$ has a weak \nbdd{\forall\exists}interpretation
satisfying the conditions in Proposition~1.5.1.\,1.--4.
from~\cite[p.~36]{BarbinaThesis}, then $(\Q,\crc)$ and~$(\Q,\sep)$ each
will have one, and will satisfy the assumptions of
Corollary~\ref{cor:combining-results}, which will entail automatic
action compatibility with respect to countable
\nbdd{\aleph_0}categorical structures of the embeddings monoid and any
closed clone having this monoid as its unary part.
\end{corollary}
\begin{proof}
The assumed weak \nbdd{\forall\exists}interpretation for $(\Q,\crc)$
and Lemma~\ref{lem:Q-circ} provide the preconditions
for~\cite[Proposition~1.5.1]{BarbinaThesis}. This result now yields
that $(\Q,\sep)$ has a weak \nbdd{\forall\exists}interpretation, too,
and this, together with the facts shown in
Lemma~\ref{lem:str-partially-satisfying-assumptions}, will make
Corollary~\ref{cor:combining-results} applicable.
\end{proof}

We mentioned earlier that there may be different ways to combine our
theorems to obtain reconstruction results. In this way
Theorem~\ref{thm:reconstruction-ssip}, recently proved by Paolini and
Shelah (see~\cite{PaoliniShelahReconstructionWithStrongSIP}), can be
used to provide reconstruction of the action for the reducts
$(\mathbb{Q},\crc)$ and $(\mathbb{Q},\sep)$, with respect to a
slightly smaller class than the one we would get from a solution to
Problem~\ref{prob:reducts-forallexists}.

\begin{corollary}\label{cor:combining-results-ssip}
Let~$\mathcal{K}'$ be the class of all endomorphism monoids of countable
\nbdd{\aleph_0}categorical structures with the strong small index
property and no algebraicity, and let~$\mathbb{A}$ be such a structure.
Let~$M$ be a closed transformation monoid
on the carrier set of~$\mathbb{A}$, e.g.\ $M=\End(\mathbb{A})$
or $M=\Emb(\mathbb{A})$. If
\begin{enumerate}[\upshape (1)]
\item\label{item:density-ssip}
  $\Aut(\mathbb{A})$ is dense in~$M$ and coincides with the group of
  invertible elements $\set{g\in M \mid \exists f\in M\colon f\circ g = g\circ f
  = \id_A}$,
\item\label{item:weakly-directed-ssip}
  $M$ is weakly directed, e.g.\ transitive, and
\item\label{item:inj-mon-end-ssip}
  every injective monoid endomorphism of~$M$ that
  fixes $\Aut(\mathbb{A})$ pointwise is the identity,
  or\par
  $M$ has automatic homeomorphicity w.r.t.\ a
  class~$\mathcal{L}\supseteq\mathcal{K}'$ of closed
  transformation monoids such that
  $\cl{G'}\in\mathcal{L}$ for the set~$G'$ of invertibles of
  any monoid $M'\in\mathcal{K}'$,
\end{enumerate}
then any closed clone~$F$ on the carrier set
of~$\mathbb{A}$ satisfying $\Fn[1]{F} = M$ has automatic action
compatibility \inbr{and thus automatic homeomorphicity} with respect to the
class~$\mathcal{C}'$ of all polymorphism clones of countable
\nbdd{\aleph_0}categorical structures with the strong small index
property and no algebraicity.
Moreover, $M$ has automatic action compatibility with respect
to~$\mathcal{K}'$.
\end{corollary}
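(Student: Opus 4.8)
The plan is to transcribe the proof of Corollary~\ref{cor:combining-results} almost verbatim, replacing Rubin's group\dash{}categoricity result (Theorem~\ref{thm:Rubin-group-categoricity}) by the Paolini--Shelah reconstruction theorem (Theorem~\ref{thm:reconstruction-ssip}). Since the latter takes the strong small index property as its hypothesis in place of the existence of a weak \nbdd{\forall\exists}interpretation, the assumption corresponding to~\eqref{item:forallexists} of Corollary~\ref{cor:combining-results} is no longer imposed separately; it has been absorbed into the defining property of the parameter class~$\mathcal{K}'$ (and of~$\mathcal{C}'$).

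Concretely, abbreviating $G=\Aut(\mathbb{A})$, I would fix an arbitrary monoid isomorphism $\xi\colon M\to M'$ onto some $M'\in\mathcal{K}'$ on a carrier set~$B$ equipotent to~$A$. Since $\set{\Fn[1]{F'}\mid F'\in\mathcal{C}'}\subseteq\mathcal{K}'$, it suffices to show that~$\xi$ is induced by conjugation, after which Theorem~\ref{thm2:reconstruction} immediately delivers automatic action compatibility (and hence automatic homeomorphicity) of~$F$ with respect to~$\mathcal{C}'$; the additional claim for~$M$ is then just the statement about the unary part of the clone.

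To produce the conjugation I would invoke Lemma~\ref{lem:conj-grp-to-mon}. Its hypothesis~\eqref{item:test-equality-on-groups} is verified exactly as in the proof of Corollary~\ref{cor:combining-results}: either directly from the first alternative of~\eqref{item:inj-mon-end-ssip} via Lemma~\ref{lem:inj-mon-endo}, or from the second alternative by observing that Proposition~\ref{prop:char-aut-homeo-dense-invertibles}\eqref{item:aut-homeo} holds (cf.\ Remark~\ref{rem:class-G-not-always-needed}) and then applying Proposition~\ref{prop:char-aut-homeo-dense-invertibles}\eqref{item:grp-extension}, using that the closed monoid $\cl{G'}\subs M'$ lies in~$\mathcal{L}$. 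The single genuinely new point is hypothesis~\eqref{item:restricted-grp-iso-is-induced}: here I would appeal to Theorem~\ref{thm:reconstruction-ssip} applied to the restricted group isomorphism $\xi\restriction_G^{G'}\colon G\to G'$. This is legitimate because, on one side, $\mathbb{A}$ is assumed to be countable \nbdd{\aleph_0}categorical with the strong small index property and without algebraicity, while on the other side the membership $M'\in\mathcal{K}'$ forces $G'=G(M')$ to be the automorphism group of a countable \nbdd{\aleph_0}categorical structure sharing exactly these three properties. Thus both~$G$ and~$G'$ satisfy the preconditions of Theorem~\ref{thm:reconstruction-ssip}, which yields a bijection $\theta\colon A\to B$ inducing $\xi\restriction_G^{G'}$ by conjugation. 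With both conditions of Lemma~\ref{lem:conj-grp-to-mon} in place, that lemma concludes that~$\xi$ itself is induced by conjugation by the same~$\theta$.

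The main obstacle is purely a matter of bookkeeping: one must ensure that the structure underlying~$M'$ inherits the strong small index property so that Theorem~\ref{thm:reconstruction-ssip} genuinely applies to~$G'$. But this requirement is precisely what the definition of~$\mathcal{K}'$ stipulates, so it holds automatically and no extra argument is needed. Everything else is a faithful copy of the earlier reasoning with $\mathcal{K},\mathcal{C}$ systematically replaced by $\mathcal{K}',\mathcal{C}'$.
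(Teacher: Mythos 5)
Your proposal is correct and takes essentially the same approach as the paper, whose own proof states that the argument is literally identical to that of Corollary~\ref{cor:combining-results} with $\mathcal{K},\mathcal{C}$ replaced by $\mathcal{K}',\mathcal{C}'$, Theorem~\ref{thm:Rubin-group-categoricity} replaced by Theorem~\ref{thm:reconstruction-ssip}, and the strong small index property of~$G'$ supplied directly by the definition of~$\mathcal{K}'$. Your more detailed transcription (Lemma~\ref{lem:conj-grp-to-mon} with its two hypotheses checked via Lemma~\ref{lem:inj-mon-endo} or Proposition~\ref{prop:char-aut-homeo-dense-invertibles} and Remark~\ref{rem:class-G-not-always-needed}, then Theorem~\ref{thm2:reconstruction}) is exactly the reasoning being reused, so nothing further is needed.
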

\begin{proof}
Except for replacing~$\mathcal{K}$ and~$\mathcal{C}$
by~$\mathcal{K}'$ and~$\mathcal{C}'$, respectively, the proof is
literally identical to the one of Corollary~\ref{cor:combining-results}
with only small changes occurring in the last paragraph:
\nbdd{\forall\exists}interpretations for~$\mathbb{A}$ are not needed
any more since Theorem~\ref{thm:Rubin-group-categoricity} is replaced by
Theorem~\ref{thm:reconstruction-ssip}, and~$G'$ is now the automorphism
group of a countable \nbdd{\aleph_0}categorical structure
without algebraicity whose group, that is~$G'$, has the strong small
index property by the choice of~$\mathcal{K}'$.
\end{proof}

\begin{corollary}\label{cor:reconstruction-ssip}
If $\mathbb{A}=(\Q,\rho)$, where $\rho\in\set{\crc,\sep}$, then
$\End(\mathbb{A})$ and every closed clone~$F$ on the carrier
of~$\mathbb{A}$ with~$\Fn[1]{F}=\End{\mathbb{A}}$ have automatic action
compatibility with respect to countable \nbdd{\aleph_0}categorical
structures with the strong small index property and no algebraicity.
In particular, this holds for the polymorphism clones
$F=\Pol(\mathbb{A})$.
\end{corollary}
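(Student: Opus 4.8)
The plan is to apply Corollary~\ref{cor:combining-results-ssip} to each of $\mathbb{A}=(\Q,\crc)$ and $\mathbb{A}=(\Q,\sep)$, taking $M=\End(\mathbb{A})$, which for a reduct of $(\Q,<)$ coincides with $\Emb(\mathbb{A})$. First I would note that almost everything Corollary~\ref{cor:combining-results-ssip} demands has already been assembled in Lemma~\ref{lem:str-partially-satisfying-assumptions}: there $(\Q,\crc)$ and $(\Q,\sep)$ occur among the structures of type~(a), and the lemma certifies, for $M=\Emb(\mathbb{A})$, that $\mathbb{A}$ is countable \nbdd{\aleph_0}categorical without algebraicity and that the three structural hypotheses~(1)--(3) of Corollary~\ref{cor:combining-results-ssip} hold, that is, density of $\Aut(\mathbb{A})$ as the group of invertibles of~$M$, weak directedness of~$M$ (inherited from transitivity of $\Aut(\mathbb{A})$), and the injective monoid endomorphism condition. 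These are precisely conditions~(2)--(4) of Corollary~\ref{cor:combining-results}, and crucially the lemma discharges the last one through its \emph{class-independent} alternative---that the only injective monoid endomorphism of~$M$ fixing $\Aut(\mathbb{A})$ pointwise is the identity---so the verification transfers verbatim from the class~$\mathcal{K}$ to the primed class~$\mathcal{K}'$.

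The single hypothesis of Corollary~\ref{cor:combining-results-ssip} \emph{not} supplied by Lemma~\ref{lem:str-partially-satisfying-assumptions} is the standing one that $\mathbb{A}$ itself lie in the governing class, i.e.\ that $(\Q,\crc)$ and $(\Q,\sep)$ enjoy the strong small index property. This is the crux of the proof, and exactly the point at which the betweenness reduct $(\Q,\betw)$ must be dropped. Here I would appeal to the known strong small index property of the automorphism groups of the circular-order and separation reducts of $(\Q,<)$, as established in the reconstruction literature for these groups; combined with the oligomorphicity and absence of algebraicity already recorded in Lemma~\ref{lem:str-partially-satisfying-assumptions}, this places $(\Q,\crc)$ and $(\Q,\sep)$ inside the class of structures to which Corollary~\ref{cor:combining-results-ssip} applies, while leaving the analogous assertion for $(\Q,\betw)$ open. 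I expect locating and correctly invoking this SSIP result to be the genuine obstacle; the remainder is bookkeeping, and it is precisely this route that sidesteps the unresolved weak \nbdd{\forall\exists}interpretation of Problem~\ref{prob:reducts-forallexists}, at the cost of the slightly smaller scope class~$\mathcal{C}'$.

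With SSIP in hand the conclusion is immediate. Corollary~\ref{cor:combining-results-ssip} yields that $M=\End(\mathbb{A})$ has automatic action compatibility with respect to~$\mathcal{K}'$ and that every closed clone~$F$ on~$\Q$ with $\Fn[1]{F}=M$ has automatic action compatibility, hence automatic homeomorphicity, with respect to the class~$\mathcal{C}'$ of polymorphism clones of countable \nbdd{\aleph_0}categorical structures with the strong small index property and no algebraicity. To finish with the ``in particular'' clause I would simply observe that the polymorphism clone $F=\Pol(\mathbb{A})$ is closed, as every polymorphism clone is, and that its unary part satisfies $\Fn[1]{\Pol(\mathbb{A})}=\End(\mathbb{A})=M$; hence $\Pol(\Q,\crc)$ and $\Pol(\Q,\sep)$ meet the requirement $\Fn[1]{F}=M$ and inherit automatic action compatibility.
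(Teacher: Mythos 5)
Your proposal is correct and follows essentially the same route as the paper's proof: Lemma~\ref{lem:str-partially-satisfying-assumptions} discharges all hypotheses of Corollary~\ref{cor:combining-results-ssip} (including, as you rightly note, the class-independent injective-endomorphism alternative, so the passage from $\mathcal{K}$ to $\mathcal{K}'$ is automatic), leaving only the strong small index property of $(\Q,\crc)$ and $(\Q,\sep)$ to be imported. The single ingredient you leave as a pointer is supplied in the paper by precise citations: for $(\Q,\crc)$ the SSIP follows from the intersection condition of Lemma~3.8 of~\cite{EdithTruss} via the criterion in~\cite[Theorem~4.2.9, p.~146]{HodgesModelTheory}, and for $(\Q,\sep)$ it is observed in~\cite{EdithTruss} directly after Theorem~4.1.
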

\begin{proof}
By Lemma~\ref{lem:str-partially-satisfying-assumptions} all assumptions of
Corollary~\ref{cor:combining-results-ssip} have already been verified
except for the strong small index property.
For $(\Q,\crc)$ this is a direct consequence
(cf.~\cite[Theorem~4.2.9, p.~146]{HodgesModelTheory}) of the
intersection condition proved in Lemma~3.8 of~\cite{TrussVargasReconstructingTopologyMonoidsPolClonesOfRationals},
and is discussed in close proximity to this lemma in the mentioned
article. For $(\Q,\sep)$ the strong small index property has been
observed in~\cite{TrussVargasReconstructingTopologyMonoidsPolClonesOfRationals} directly after Theorem~4.1.
\end{proof}

For $(\Q,\betw)$ the previously presented approach is not applicable
since this structure fails to have the strong small index property. To
see this take for instance $H = \Aut(\Q,<)$, which has index two in
$G=\Aut(\Q,\betw)$. If $H$ were a subset of the setwise stabilizer of a
finite $B\subs \Q$ under $G$, then every order preserving permutation of
$\Q$ would have to preserve~$B$, but unless $B=\emptyset$, this is
impossible. However, for $B=\emptyset$, the pointwise (and setwise)
stabilizer of~$B$ is the whole of~$G$, which is not contained in~$H$.

\paragraph{Acknowledgements}
The authors are highly grateful to Christian Pech for enlightening
discussions on the subject and many valuable comments. In particular,
Christian Pech observed that the argument given in
Corollary~\ref{cor:easier-assumptions} was
already sufficient to cover four more examples that were initially
listed as open. This in turn has led to significant simplification and
streamlining of several proofs in section~\ref{sect:examples}.
Moreover, Christian Pech pointed out that~$(\Q,\betw)$ can be
dealt with using the results from~\cite{BarbinaThesis}. This
initiated the work on Lemmas~\ref{lem:local-generics} and~\ref{lem:Q-circ}
concerning~$(\Q,\crc)$ and~$(\Q,\sep)$. The authors are
also indebted to John Truss for helpful remarks, some clarifications
regarding the four reducts of the rationals treated in the article, and
for moral support of their work. Furthermore, the first
named author thanks Manuel Bodirsky for pointing him to the work of
Paolini and Shelah.

%\bibliographystyle{plain}%
%\bibliography{references}%

\end{document}